\renewcommand\epsilon\varespilon 
\newcommand\NN{\mathbb{N}} 
\newcommand\ZZ{\mathbb{Z}} 
\newcommand\RR{\mathbb{R}} 
\newcommand\gts[1]{``#1''} 
\theoremstyle{definition} 
\newtheorem{Def}{Definition}[subsection] 
\newtheorem{Def,Thm}[Def]{Theorem-definition} 
\newtheorem{Def,Prop}[Def]{Proposition-definition} 
\newtheorem{Not}[Def]{Notation} 
\theoremstyle{plain} 
\newtheorem{Prop}[Def]{Proposition} 
\newtheorem{Lem}[Def]{Lemma} 
\newtheorem{Thm}[Def]{Theorem} 
\newtheorem{Cor}[Def]{Corollary} 
\theoremstyle{remark} 
\newtheorem{Ex}[Def]{Example} 
\newtheorem{Exs}[Def]{Examples} 
\newtheorem{Rq}[Def]{Remark} 
\begin{document} 

\title{On~profinite~subgroups of an~algebraic~group over~a~local~field}

\author{B.~Loisel}




\maketitle 

\begin{abstract}
The purpose of this paper is to link anisotropy properties of an algebraic group together with compactness issues in the topological group of its rational points.
We find equivalent conditions on a smooth affine algebraic group scheme over a non-Archimedean local field for the associated rational points to admit maximal compact subgroups.
We use the structure theory of pseudo-reductive groups provided, whatever the characteristic, by Conrad, Gabber and Prasad.
We also investigate thoroughly maximal pro-$p$ subgroups in the semisimple case, using Bruhat-Tits theory.
\end{abstract}

\tableofcontents

\section{Introduction}

Given a base field $k$ and an affine algebraic $k$-group denoted by $G$, we get an abstract group called the group of rational points, denoted by $G(k)$.
By the term {\em algebraic $k$-group}, we mean in this paper that $G$ is a group scheme defined over $k$ of finite type.
Unless stated otherwise, all algebraic $k$-groups will be assumed to be smooth and affine, but not necessarily connected, that is to say a linear algebraic group in the terminology of \cite{Borel} for instance.
When $k$ is a topological field, this group inherits a topology from the field.
It is a natural problem to link some algebraic properties of an algebraic $k$-group $G$ and topological properties of its rational points $G(k)$.
In this article, we consider a non-Archimedean local field $k$, hence the topological group $G(k)$ will be totally disconnected  and locally compact.
Thus, one can investigate the compact, equivalently profinite, subgroups of $G(k)$.
In the following, we denote by $\omega$ the discrete valuation, $\mathcal{O}_k$ the ring of integers, $\mathfrak{m}$ its maximal ideal, $\varpi$ a uniformizer, and $\kappa = \mathcal{O}_k / \mathfrak{m}$ the residue field.
We denote by $\overline{k}$ an algebraic closure of $k$.

\subsection{Existence of maximal compact subgroups}
Starting from the algebraic $k$-group $G$, one is led to consider the group of rational points $G(k)$ endowed with the topology induced by that of the base field $k$.
We would like to get a correspondence between algebraic properties of $G$ and topological properties of $G(k)$.
A theorem of Bruhat and Tits makes a link between anisotropy and compactness \cite[5.1.27]{BruhatTits2} for reductive groups.
Another link between algebra and topology is Godement's compactness criterion for arithmetic quotients of non-Archimedean Lie groups, recently extended to positive characteristic by Conrad \cite[A5]{Conrad-cosetfinite}.
In the first part, we obtain further results for a general algebraic group over a local field; more precisely, we provide a purely algebraic condition on the $k$-group $G$ for $G(k)$ to admit maximal compact subgroups. The fact that this condition is non-trivial is roughly explained by the following:

\begin{Exs}\label{ex:compact:open:subgroups}
Consider the additive group $\mathbb{G}_{a,k}$.
Inside the topological group $(k,+)$, the subgroups $\varpi^n \mathcal{O}_k$, where $n \in \NN$ form a basis of compact open neighbourhoods of the neutral element $0$.
However, $k$ is not compact and does not admit a maximal compact subgroup, since $k$ is the union $\bigcup_{n \in \ZZ} \varpi^n \mathcal{O}_k$ of compact subgroups.
Moreover, $(k,+)$ is not compactly generated.

On the opposite, consider the multiplicative group $\mathbb{G}_{m,k}$.
The topological group $k^\times$ has a unique maximal compact subgroup: $\mathcal{O}_k^\times$.
Since $k$ is assumed to be discretely valued by $\omega : k^\times \rightarrow \ZZ$, the topological group $k^\times$ is compactly generated by $\mathcal{O}_k^\times$ and an element $x \in k^\times$ such that $\omega(x) = 1$.
\end{Exs}

In general, maximal compact subgroups of a reductive group are parametrised by its enlarged Bruhat-Tits building \cite[3.2]{TitsCorvallis} (the building in \cite{TitsCorvallis} corresponds to the enlarged building \cite[4.2.6]{BruhatTits2}; see \cite[II.2]{RousseauHdR} for more details with bounded subgroups).

In fact, the additive group is the prototype of an algebraic group which does not have a maximal compact subgroup in its rational points. More precisely:

\begin{Thm}\label{thm:first:compact:condition}
Let $k$ be a non-Archimedean local field and $G$ a connected algebraic $k$-group.
The topological group $G(k)$ admits a maximal compact subgroup if, and only if, $G$ does not contain a non-trivial connected unipotent $k$-split normal $k$-subgroup.

Under these conditions, $G(k)$ is, moreover, compactly generated.
\end{Thm}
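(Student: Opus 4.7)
The plan is to handle necessity by a direct enlargement argument and sufficiency by reduction to the pseudo-reductive setting, where the extension of Bruhat--Tits theory supplied by Conrad, Gabber and Prasad applies.

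For necessity, I would argue contrapositively: suppose $U \subset G$ is a non-trivial connected $k$-split unipotent normal $k$-subgroup, and let $K \subset G(k)$ be an arbitrary compact subgroup. The aim is to produce a strictly larger compact subgroup. Since $U$ is $k$-split, it admits a composition series $U = U_0 \supset U_1 \supset \cdots \supset U_n = 1$ by $G$-stable connected $k$-subgroups with successive quotients isomorphic to $\mathbb{G}_{a,k}$. Iterating through this filtration, I would construct an exhaustive increasing family of $K$-stable compact open subgroups $V_N \nearrow U(k)$, where $K$-stability stems from the observation that on each graded piece $\mathbb{G}_{a,k}$ the compact group $K$ acts (via conjugation) through a compact subgroup of the continuous automorphism group of $(k,+)$, and hence preserves a scaled $\mathcal{O}_k$-lattice. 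For $N$ large enough, $V_N$ strictly contains $K \cap U(k)$, and then $K \cdot V_N$ is a subgroup of $G(k)$ (because $K$ normalises $V_N$), compact as the product of two compact sets, and strictly larger than $K$, precluding any maximality assumption. A similar enlargement argument yields the failure of compact generation in this case, as suggested by Example~\ref{ex:compact:open:subgroups}.

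For sufficiency, assume $R_{us,k}(G) = 1$, and let $U := R_{u,k}(G)$ denote the maximal smooth connected unipotent normal $k$-subgroup. The containment $R_{us,k}(U) \subset R_{us,k}(G) = 1$ forces $U$ to be $k$-wound, and a classical theorem of Tits--Oesterl\'e ensures that $U(k)$ is compact over our non-Archimedean local field $k$. The quotient $G^{\mathrm{pred}} := G/U$ is pseudo-reductive, and the Conrad--Gabber--Prasad structure theorem, combined with the extension of Bruhat--Tits theory to pseudo-reductive groups, furnishes a maximal compact subgroup $K' \subset G^{\mathrm{pred}}(k)$ (realised as the stabiliser of a point of the enlarged building) and compact generation of $G^{\mathrm{pred}}(k)$. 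These features transfer back to $G(k)$ through the short exact sequence $1 \to U(k) \to G(k) \xrightarrow{\pi} \pi(G(k)) \to 1$: since the kernel $U(k)$ is compact, $\pi$ is proper, so $\pi^{-1}(K' \cap \pi(G(k)))$ is a compact subgroup of $G(k)$ whose maximality is verified by projecting any hypothetical strictly larger compact subgroup back to $G^{\mathrm{pred}}(k)$ and invoking the maximality of $K'$.

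The main obstacle will be the analysis of the image $\pi(G(k)) \subset G^{\mathrm{pred}}(k)$: its position is governed by $H^1(k, U)$, which for a $k$-wound unipotent group over a local field of positive characteristic is subtle, and I expect to need a dedicated argument---via Galois-cohomological finiteness results of Oesterl\'e, or via a direct Moy--Prasad style descent from the building---to ensure that $\pi(G(k))$ is open of finite index in $G^{\mathrm{pred}}(k)$ and therefore inherits a maximal compact subgroup and compact generation. Once this is established, compact generation of $G(k)$ follows by combining a compact lift of a compact generating set of $\pi(G(k))$ with the compact group $U(k)$.
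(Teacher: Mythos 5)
Your \gts{only if} direction is essentially the argument the paper uses (Lemma \ref{lem:unipotent:open:pro-p:covering} together with Proposition \ref{prop:maximal:compact:existence:implies:quasireductive}): exhaust the non-compact normal subgroup $U(k)$ by compact open subgroups and enlarge any given compact subgroup $K$ by saturation under conjugation. Two details to watch: a $k$-split $U$ normal in $G$ admits a $G$-stable filtration with \emph{vector-group} quotients rather than $\mathbb{G}_a$ itself (refining to $\mathbb{G}_a$ may destroy $G$-stability), and instead of building a $K$-stable exhaustion by hand it is cleaner to note, as the paper does, that $\bigcup_{h\in K}hV_Nh^{-1}$ is compact, hence contained in some member of the exhaustion, so the closed subgroup it generates is still compact and is normalised by $K$. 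These points are repairable.

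The \gts{if} direction has a genuine gap, and it is precisely the obstacle the paper is built to circumvent. First, there is no Bruhat--Tits theory for pseudo-reductive groups to invoke: one has a spherical Tits system for quasi-reductive groups \cite[C.2.20]{CGP}, but the existence of an affine Tits system --- hence of a building with a proper action realising maximal compact subgroups as point stabilisers --- is not known in this generality. The paper instead uses the (generalised) standard presentation of Conrad--Gabber--Prasad to reduce to Weil restrictions of absolutely simple semisimple and basic exotic groups, where genuine buildings are available, plus a commutative quasi-reductive part. Second, transferring your conclusion back along $\pi\colon G(k)\to (G/U)(k)$ requires controlling the index of $\pi(G(k))$, which is governed by $H^1(k,U)$ for the wound unipotent group $U=\mathcal{R}_{u,k}(G)$; this pointed set can be infinite over a local field of positive characteristic (see \cite[11.3.3]{CGP}), so no argument will make $\pi(G(k))$ of finite index in general, and the property \gts{admits a maximal compact subgroup} is not obviously inherited by open subgroups of infinite index. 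The paper's solution is to transfer the Noetherian property instead: it passes to arbitrary open subgroups and through extensions (Proposition \ref{prop:noetherian:groups}), so only openness of $\pi_k$ (automatic from smoothness of $U$) is needed, and Noetherianity plus the existence of one compact open subgroup yields both maximal compact subgroups and compact generation (the latter via \cite{CapraceMarquis}). Without some such intermediate invariant your maximality and compact-generation transfers do not go through.
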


We will go back to the notion of splitness for unipotent groups; it corresponds to the existence of a filtration with subgroups isomorphic to $\mathbb{G}_a$.
In characteristic zero, all unipotent groups are split and, in fact, the above algebraic condition amounts to requiring that $G$ be reductive.
In this case, the theorem appears in \cite[§3.3]{PlatonovRapinchuk}.
Here, our theorem covers all cases and the proof, using Bruhat-Tits theory and pseudo-reductive groups, is uniform whatever the characteristic of the local field.

\subsection{Conjugacy and description of maximal pro-\texorpdfstring{$p$}{p} subgroups}

Once we know that an algebraic group $G$ admits maximal profinite subgroups (which are exactly maximal compact subgroups), we would like to describe them more precisely.
In the case of a semisimple $k$-group $G$, we can use the integral models of $G$ and the action of $G(k)$ on its Bruhat-Tits building $X(G,k)$.
There are, in general, several conjugacy classes of maximal profinite subgroups (in the simply connected case, they correspond to the different types of vertices).
However, the maximal pro-$p$ subgroups appear, in turn, to take the role of $p$-Sylow subgroups, as the following states:

\begin{Thm}\label{thm:conjugaison:maximal:pro-p}
Let $k$ be a non-Archimedean local field of residue characteristic $p$.
Let $G$ be a semisimple $k$-group.
Then, $G(k)$ admits maximal pro-$p$ subgroups and they are pairwise conjugate.

\end{Thm}

These maximal pro-$p$ subgroups will be described, in Theorem \ref{thm:description:models:pro-p}, thanks to some well-chosen integral models.

\subsection{Algebraic groups over imperfect fields}

As already mentioned, we have to use the notion of a pseudo-reductive group.
This notion was first introduced by Borel and Tits in \cite{BorelTitsNoteauCRAS} but was deeply studied only recently, by Conrad, Gabber and Prasad in \cite{CGP}.

If $k$ is any field, the unipotent radical of a smooth affine algebraic $k$-group $G$, denoted by $\mathcal{R}_{u,\overline{k}}(G_{\overline{k}})$, can fail to descend to a $k$-subgroup of $G$ when $k$ is imperfect.
It has a minimal field of definition which is a finite purely inseparable finite extension of the base field $k$ \cite[1.1.9]{CGP}.
Hence, we have to replace the unipotent radical ${\mathcal{R}_{u,\overline{k}}(G_{\overline{k}})}$ by the unipotent $k$-radical, denoted by $\mathcal{R}_{u,k}(G)$ and defined as the maximal smooth connected unipotent normal $k$-subgroup of $G$.
However, thanks to the following short exact sequence of algebraic $k$-groups:
\begin{equation*}
1 \rightarrow \mathcal{R}_{u,k}(G) \rightarrow G \rightarrow G / \mathcal{R}_{u,k}(G) \rightarrow 1
\end{equation*}
we can understand better the algebraic $k$-group $G$.
Of course, when $k$ is perfect, this is exactly the maximal reductive quotient of $G$.

Let $G$ be a smooth connected algebraic $k$-group.
One says that $G$ is \textbf{pseudo-reductive} if $\mathcal{R}_{u,k}(G)$ is trivial.
Over perfect fields, it corresponds to reductivity, but it is far from true in general. We have to face this difficulty because for a local field $k$ of characteristic $p$, we have $[k:k^p] = p$.

\begin{Ex}
If $k'/k$ is a purely inseparable finite extension, the commutative group $R_{k'/k}(\mathbb{G}_{m,k})$ is pseudo-reductive. Its unipotent radical $\mathcal{R}_{u,\overline{k}}(G_{\overline{k}})$ descent to a $k'$-group and is non-trivial since the quotient $R_{k'/k}(\mathbb{G}_{m,k}) / \mathbb{G}_{m,k}$ is unipotent \cite[1.1.3]{CGP}.
Therefore, it is not reductive.
\end{Ex}

Thanks to the main structural theorem of Conrad, Gabber and Prasad \cite[5.1.1]{CGP}, we have a deeper understanding of pseudo-reductive groups.
Hence, there is some hope to generalise results on reductive groups to pseudo-reductive groups and, by considering successive quotients, to obtain general results on arbitrary connected algebraic groups. Typically, this notion enabled B.~Conrad to obtain a Godement compactness criterion in terms of anisotropy for general groups over any local field (note that, until recently, standard references \cite{Margulis} quote this criterion for general groups in characteristic $0$ but only for reductive groups in positive characteristic, while it is now known to be true even for non-reductive groups in positive characteristic).

Thanks to the structure theory of unipotent groups provided by Tits \cite[B.2]{CGP}, we have notions of \gts{splitness}, \gts{isotropy} and \gts{anisotropy} for unipotent groups.
The most intriguing one is anisotropy, defined as follows.

Let $U$ be a smooth affine unipotent $k$-group.
One says that $U$ is $k$-\textbf{wound} if there are no non constant $k$-morphisms to $U$ from the affine $k$-line (where $U$ and $\mathbb{A}^1$ are seen as $k$-schemes), or equivalently if there is no nontrivial action of $\mathbb{G}_m$ on $U$.
Over a perfect base field, such a group has to be trivial; hence, this definition makes sense only for imperfect fields.

We recall the following definition of Bruhat and Tits \cite[1.1.12]{BruhatTits2}, initially introduced in a note of Borel and Tits \cite{BorelTitsNoteauCRAS}.

\begin{Def}\label{def:quasi:reductive}
Let $G$ be a smooth \textit{connected} algebraic $k$-group.
One says that $G$ is \textbf{quasi-reductive} if $\mathcal{R}_{u,k}(G)$ is $k$-wound.
\end{Def}

\begin{Rq}

Unless stated otherwise, we assume that a semisimple, reductive, pseudo-reductive or quasi-reductive $k$-group is connected by definition. Note that we have the containments:
$$\{\text{semisimple}\} \subset \{\text{reductive}\} \subset \{\text{pseudo-reductive}\} \subset \{\text{quasi-reductive}\}$$

In Theorem \ref{thm:first:compact:condition}, the algebraic $k$-group verifying the equivalent conditions are exactly the quasi-reductive ones.
\end{Rq}

In the same way as in the reductive case \cite[BTR theorem]{Prasad-TitsTheorem}, there is a correspondence between compactness and anisotropy for unipotent groups, given by Oesterlé \cite[VI.1]{Oesterle}: assume that $k$ is an imperfect local field, then $U$ is $k$-wound if, and only if, $U(k)$ is compact.

\subsection{The case of a topological base field}

From now on, $k$ is a local field of residual characteristic $p$.

If a totally disconnected locally compact group has a maximal compact subgroup, then its locally elliptic radical is compact (see Lemma \ref{lem:locally:elliptic:maximal}).
If $U$ is a non-trivial connected $k$-split unipotent $k$-group, we will build, in Lemma \ref{lem:unipotent:open:pro-p:covering} by analogy with the case of $\mathbb{G}_a$ seen in Example \ref{ex:compact:open:subgroups}, an exhaustion of the non-compact group $U(k)$ by (increasing) compact open subgroups.
Using Proposition \ref{prop:maximal:compact:existence:implies:quasireductive}, we deduce that if an algebraic $k$-group $G$ contains such a $U$ as a normal $k$-subgroup, then the elliptic radical of $G(k)$ is not compact since it contains $U(k)$ as a normal subgroup.
Thus, $G(k)$ cannot have a maximal compact subgroup.


Conversely, it is well-known that if $G$ is a semisimple $k$-group, then $G(k)$ has a maximal compact subgroup.
Hence, we would like to prove the same fact for any quasi-reductive $k$-group.
It is natural to exploit properness and finiteness properties of long exact sequences in Galois cohomology attached to some group extensions, but these properties are not satisfied in general.
In fact, the first Galois cohomology pointed sets of relevant normal subgroups of $G$ often fail to be finite in positive characteristic (e.g. $\# H^1(k,Z_G) = \infty$ when $\mathrm{char}(k) = p > 0$ and $G = \mathrm{SL}_p$; see also \cite[11.3.3]{CGP} for an example of a unipotent group).

Therefore cohomological methods are not sufficient to conclude.
We are using topological properties of rational points.
One of them is the following:

\begin{Def}\label{def:noetherian:group}
A topological group $G$ is called \textbf{Noetherian} if it satisfies the
ascending chain condition on open subgroups; this means that any increasing sequence of open subgroups of $G$ is eventually constant.
\end{Def}

\begin{Ex}\label{ex:noetherian:group}
(1) The discrete abelian group $(\ZZ,+)$ is Noetherian since any subgroup of $\ZZ$ is an ideal of the Noetherian ring $\ZZ$.

(2) By Example \ref{ex:compact:open:subgroups},
the additive group of a non-Archimedean local field is not a Noetherian group since it has an infinite strictly increasing sequence of open subgroups, namely $(\varpi^{-n} \mathcal{O}_k)_{n\in \NN}$.
\end{Ex}

Because the additive topological group $(k,+)$ (seen as the group of rational points of the additive group $\mathbb{G}_a$) admits no maximal compact subgroup, there is no hope for a non-$k$-wound unipotent group $U$ to have a maximal compact subgroup inside its rational points.
Together with Oesterl\'e's previously mentioned result, this is the heuristics leading to:

\begin{Thm}\label{thm:equivalence:quasi:reductive}
Let $k$ be a non-Archimedean local field with residue characteristic $p$ and $G$ be a smooth affine algebraic $k$-group. The following are equivalent:
\begin{enumerate}
\item[(i)] The identity component $G^0$ of $G$ is a quasi-reductive $k$-group,
\item[(ii)] $G(k)$ is Noetherian,
\item[(iii)] $G(k)$ admits a maximal compact subgroup,
\item[(iv)] $G(k)$ admits a maximal pro-$p$ subgroup.
\end{enumerate}

Moreover, under the above equivalent conditions:

(1) Every pro-$p$ (resp. compact) subgroup of $G(k)$ is contained in a maximal  pro-$p$ (resp. compact) subgroup of $G(k)$.

(2) Every maximal pro-$p$ (resp. compact) subgroup of $G(k)$ is open.
\end{Thm}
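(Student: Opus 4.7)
The plan is to prove the equivalences by a mix of reductions and dévissage, then derive the moreover statements. A preliminary observation is that $G^0(k)$ is an open subgroup of finite index in $G(k)$, and each of the conditions (i)--(iv) is preserved under passage between $G$ and $G^0$, so I may assume $G$ is connected; then (i) reads ``$G$ is quasi-reductive''. Tits' structure theory of smooth connected unipotent groups provides inside $\mathcal{R}_{u,k}(G)$ a unique maximal smooth connected $k$-split normal subgroup $\mathcal{R}_{us,k}(G)$, characteristic by uniqueness and thus normal in $G$, whose vanishing is equivalent to $\mathcal{R}_{u,k}(G)$ being $k$-wound. This translates the algebraic condition of Theorem \ref{thm:first:compact:condition} into (i), giving (i) $\Leftrightarrow$ (iii).

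For the negative direction ``not (i) $\Rightarrow$ not (ii) and not (iv)'', set $U := \mathcal{R}_{us,k}(G)$ and suppose $U \neq 1$. By Lemma \ref{lem:unipotent:open:pro-p:covering}, $U(k)$ is exhausted by a strictly increasing sequence of compact open pro-$p$ subgroups, and Proposition \ref{prop:maximal:compact:existence:implies:quasireductive} lifts this to a strictly increasing sequence $W_1 \subsetneq W_2 \subsetneq \cdots$ of compact open pro-$p$ subgroups of $G(k)$ whose union contains the non-compact subgroup $U(k)$; this chain refutes (ii) directly, and since any pro-$p$ overgroup of some $W_n$ would itself contain the non-compact set $U(k)$, it also refutes (iv). For the converse implications (i) $\Rightarrow$ (ii) and (i) $\Rightarrow$ (iv), I would dévisser along
\begin{equation*}
1 \longrightarrow \mathcal{R}_{u,k}(G) \longrightarrow G \longrightarrow G / \mathcal{R}_{u,k}(G) \longrightarrow 1,
\end{equation*}
whose kernel has compact rational points by Oesterlé (as $\mathcal{R}_{u,k}(G)$ is $k$-wound), in fact forming a pro-$p$ group by iterated extension of one-dimensional $k$-wound pieces, and whose quotient is pseudo-reductive. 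Both Noetherianness and the existence of a maximal pro-$p$ subgroup are preserved under extensions of tdlc groups with compact (resp. compact pro-$p$) kernel: an ascending chain of open subgroups $H_n \subseteq G(k)$ gives an ascending chain $\pi(H_n)$ in the quotient, which stabilizes by hypothesis, and ascending open chains $H_n \cap \mathcal{R}_{u,k}(G)(k)$ in a profinite group stabilize automatically, so $H_n$ itself stabilizes; an analogous argument produces a maximal pro-$p$ subgroup as the preimage of one in the quotient. The problem thus reduces to the pseudo-reductive case, where the CGP structure theorem yields a further reduction to a semisimple-type derived piece --- handled by Bruhat--Tits theory (a finite Tits system controls the lattice of open overgroups of a chosen Iwahori, giving Noetherianness, and Theorem \ref{thm:conjugaison:maximal:pro-p} supplies a maximal pro-$p$ subgroup) --- and a commutative pseudo-reductive piece reducing to tori, whose rational points decompose as extensions of a finitely generated abelian group by a compact factor, manifestly Noetherian and admitting an obvious maximal pro-$p$ subgroup.

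For the moreover statements, once (ii) holds, starting from any compact (resp. pro-$p$) open subgroup $K_0$ of $G(k)$ --- the latter existing because $G(k)$ is tdlc and locally pro-$p$ --- the ACC on open subgroups, together with Zorn's lemma, produces a maximal compact (resp. pro-$p$) open subgroup containing $K_0$, and this is automatically a maximal compact (resp. pro-$p$) subgroup of $G(k)$, since any compact overgroup of a compact open subgroup is itself open (being a finite union of cosets of the open subgroup). An arbitrary compact (resp. pro-$p$) subgroup $C$ is contained in such a maximal one by first noting that, in the quasi-reductive setting, $C$ is contained in a compact open subgroup (since the image of $C$ in the pseudo-reductive quotient is contained in a parahoric-type compact open, and the preimage is compact open because the unipotent kernel is compact), and then applying the previous argument. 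The main obstacle is the dévissage step: identifying a workable reduction of the pseudo-reductive quotient to its semisimple and toral constituents via the CGP structure theorem is delicate, though the propagation through the extension by a compact pro-$p$ kernel is, by contrast, a direct ACC bookkeeping on open subgroups and their intersections with the kernel.
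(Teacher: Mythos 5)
Your overall architecture matches the paper's (cycle $(iii)\,\text{or}\,(iv)\Rightarrow(i)\Rightarrow(ii)\Rightarrow(iii)\,\text{and}\,(iv)$, dévissage through $\mathcal{R}_{u,k}(G)$ and the CGP structure theorem, ACC plus an open pro-$p$ subgroup for the ``moreover'' parts), but two steps as you have written them contain genuine gaps. First, in the direction ``not $(i)\Rightarrow$ not $(iii)$ and not $(iv)$'': the subgroups $U_n$ exhausting $U(k)$ produced by Lemma \ref{lem:unipotent:open:pro-p:covering} are open in $U(k)$, not in $G(k)$, and nothing you say produces a strictly increasing chain of \emph{compact} open subgroups of $G(k)$ containing them --- the group generated by a compact open subgroup of $G(k)$ and $U_{n}$ has no reason to be compact, and a maximal pro-$p$ overgroup of one $W_n$ has no reason to contain $W_{n+1}$, so your claim that it ``would itself contain the non-compact set $U(k)$'' does not follow. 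The paper's Proposition \ref{prop:maximal:compact:existence:implies:quasireductive} does something different and more delicate: it replaces $U$ by its maximal smooth \emph{central} $k$-subgroup $\mathcal{Z}_U$ (still non-wound by \cite[B.3.2]{CGP}), so that for a putative maximal compact or pro-$p$ subgroup $H$ the conjugates $hZ_nh^{-1}$ all stay inside $\mathcal{Z}_U(k)$, get absorbed into some $Z_{m_n}$ by compactness, and generate a pro-$p$ group normalised by $H$; maximality then forces $H\supseteq\mathcal{Z}_U(k)$, a contradiction. The centrality is essential and is missing from your argument. (Relatedly, invoking Theorem \ref{thm:first:compact:condition} for $(i)\Leftrightarrow(iii)$ is circular: in the paper that theorem is a consequence of the present one.)

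Second, your treatment of the semisimple Noetherianity is not sufficient: an increasing chain of compact open subgroups need not contain a common Iwahori subgroup (a small compact open subgroup contains none), so ``a finite Tits system controls the lattice of open overgroups of a chosen Iwahori'' does not apply to the chains you must stabilise. The paper instead proves (Proposition \ref{prop:compact:open:finitely:contained}, via Lemma \ref{lem:compact:open:fixed:points} and the compactification of the building) that a compact open subgroup has \emph{bounded} fixed-point set and hence lies in only finitely many compact subgroups, and handles unbounded chains separately using the Prasad--Tits theorem that an unbounded open subgroup of $G(k)^+$ equals $G(k)^+$ together with compactness of $G(k)/G(k)^+$. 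You should also be aware that the pseudo-reductive dévissage you defer requires controlling the failure of surjectivity of the standard presentation on rational points, which the paper does via finiteness of $H^1\big(k,R_{k'/k}(C')\big)$ and Proposition \ref{prop:noetherian:groups}(6); that the commutative piece is a general commutative quasi-reductive group, not a torus, so compactness of $(C/S)(k)$ needs Conrad's \cite[A.5.7]{Conrad-cosetfinite}; and that for statement (1) the elementary normalisation trick of Lemma \ref{lem:link:with:openness} ($P\cdot\bigcap_{x\in P}x^{-1}Ux$) avoids your appeal to parahoric subgroups of the pseudo-reductive quotient, which are not available in positive characteristic.
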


\begin{Cor}
If $G$ is a quasi-reductive $k$-group, then $G(k)$ is compactly generated.
\end{Cor}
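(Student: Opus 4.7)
The plan is to deduce compact generation from Noetherianity plus the existence of a compact open subgroup, both of which Theorem \ref{thm:equivalence:quasi:reductive} supplies when $G$ is quasi-reductive. Since a quasi-reductive $k$-group is smooth, connected, and affine by definition, the equivalence $(i) \Leftrightarrow (ii) \Leftrightarrow (iii)$ applies with $G^0 = G$. I would then pick any maximal compact subgroup $K \subset G(k)$; by conclusion (2) of that theorem, $K$ is open.

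I would argue by contradiction. Assume $G(k)$ is not compactly generated. Then for every finite tuple $(g_1,\dots,g_n)$ of elements of $G(k)$, the subgroup $\langle K, g_1, \dots, g_n\rangle$ must be a proper subgroup of $G(k)$: indeed, otherwise the compact set $K \cup \{g_1, \dots, g_n\}$ would generate $G(k)$. Setting $H_0 := K$, I inductively pick $g_{n+1} \in G(k) \setminus H_n$ and define $H_{n+1} := \langle H_n, g_{n+1}\rangle$. Each $H_{n+1}$ contains the open subgroup $H_n$, hence is itself open, and strictly contains $H_n$ by construction. By the previous remark the process never terminates, producing an infinite strictly ascending chain $H_0 \subsetneq H_1 \subsetneq H_2 \subsetneq \cdots$ of open subgroups of $G(k)$. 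This contradicts condition (ii) of Theorem \ref{thm:equivalence:quasi:reductive}, namely the ascending chain condition on open subgroups of $G(k)$.

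There is essentially no obstacle once Theorem \ref{thm:equivalence:quasi:reductive} is in hand: the corollary is a short formal consequence of the implication $(i) \Rightarrow (ii)$ together with the openness statement (2). The work is entirely upstream, in establishing the equivalence itself (in particular in constructing the compact open exhaustion of $k$-split unipotent radicals and in pushing through the pseudo-reductive structure theory). Note that the same conclusion is already recorded in the final line of Theorem \ref{thm:first:compact:condition} under the equivalent algebraic hypothesis; the corollary simply repackages it in the terminology ``quasi-reductive'' now identified with the topological conditions of Theorem \ref{thm:equivalence:quasi:reductive}.
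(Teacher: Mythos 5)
Your argument is correct and follows the same route as the paper: deduce compact generation of $G(k)$ from its Noetherianity (Theorem \ref{thm:equivalence:quasi:reductive}, $(i)\Rightarrow(ii)$) together with the presence of a compact open subgroup. The only difference is that the paper outsources the final step to \cite[Lemma 3.22]{CapraceMarquis} (a locally compact group is Noetherian iff all its open subgroups are compactly generated), whereas you prove the needed implication directly via the ascending chain $H_0\subsetneq H_1\subsetneq\cdots$, which is exactly the standard proof of that lemma.
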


\begin{proof}[Proof of corollary]
By \cite[Lemma 3.22]{CapraceMarquis} a locally compact group $G$ is Noetherian if, and only if, any open subgroup of $G$ is compactly generated.\qed
\end{proof}

This theorem and its corollary are well-known in the case of a $p$-adic field $k$ (in that case of $\mathrm{char}(k) = 0$, quasi-reductivity implies reductivity because all unipotent groups are split) as a proposition of Platonov and Rapinchuk \cite[3.3 Proposition 3.15]{PlatonovRapinchuk} and a theorem of Borel and Tits \cite[13.4]{BorelTits}.
In nonzero characteristic it is necessary to consider the notion of quasi-reductivity in the statement of the result.

For a reductive group $G$ defined over a $p$-adic field, we know moreover that a compact open subgroup is contained in finitely many compact subgroups \cite[Proposition 3.16 (1)]{PlatonovRapinchuk}.
We don't know if this statement is still true for a quasi-reductive group over a local field of positive characteristic.
In fact, when $G(k)$ acts properly on a locally finite affine building, there is a correspondence between its compact open subgroups and the non-empty bounded subsets of the Bruhat-Tits building.
In the quasi-reductive case, we have a spherical Tits system by \cite[C.2.20]{CGP} but the existence of an affine Tits system is not yet proven.

\begin{Ex}
Let $k'/k$ be a purely inseparable finite extension of local fields and $G'=\mathrm{GL}_{n,k'}$. Let $Z \simeq \mathbb{G}_{m,k}$ be the $k$-split torus canonically contained in the center $\mathcal{Z}_{G'}$ of $G'$.
Consider the $k$-group $G = R_{k'/k}(G') / Z$.
It is not pseudo-reductive since it contains the central unipotent $k$-subgroup $R_{k'/k}(\mathcal{Z}_{G'}) / Z$.
In particular, it is not the group of rational points of any reductive group.
By Hilbert 90 theorem and Lemma \ref{lem:exact:sequence:cohomology}, the quotient morphism provides an isomorphism of topological groups $G(k) \simeq G'(k') / Z(k)$.
Since $G'$ and $Z$ are reductive groups, there rational points are Noetherian groups by Theorem \ref{thm:equivalence:quasi:reductive}. By Proposition \ref{prop:noetherian:groups} (6), the group $G(k)$ is Noetherian as the image of a Noetherian group by a continuous map. By Theorem \ref{thm:equivalence:quasi:reductive}, we deduce that $G$ is quasi-reductive.
\end{Ex}

Standard presentations of pseudo-reductive group provides much more information on the structure of these groups and the proof of Theorem \ref{thm:equivalence:quasi:reductive} does not use all the aspects of that tool.
Once we know that the rational points of a pseudo-reductive group admits a maximal compact subgroup, locally elliptic subgroups (see Definition \ref{def:locally:elliptic}) appear in turn to be relevant.
We obtain the following result on the topological structure of rational points of a quasi-reductive group:

\begin{Thm}\label{thm:structure:quasi:reductive}
Let $G$ be a connected quasi-reductive group over a non-Archimedean local field $k$.
Any open subgroup $V$ of the group of rational points $G(k)$ admits a chain of closed normal subgroups $1 \leqslant Q_{V} \leqslant S_{V} \leqslant V$ such that
$Q_{V}$ is compact,
the quotient $S_{V}/Q_{V}$ is the internal direct product of finitely many non-compact, topologically simple, compactly generated locally compact groups
that are each isomorphic to the quotient of rational points of a simply connected isotropic simple algebraic group over a local field (of the same characteristic and residue characteristic as $k$) by its center,
and the quotient $V/S_{V}$ is compactly generated and virtually abelian.

Moreover, if $G$ is pseudo-reductive and $V=G(k)$, the compact group $Q_V$ is the locally elliptic radical of $G(k)$.
\end{Thm}

\subsection{Use of buildings and integral models}

Though Theorem \ref{thm:equivalence:quasi:reductive} gives a good criterion for the existence of maximal compact subgroups, the proof is not constructive in the sense that we do not have any detail about these subgroups.
Nevertheless, in the case of a semisimple $k$-group $G$, denote by $X(G,k)$ its Bruhat-Tits building.
In Proposition \ref{prop:description:compact:maximal}, we get a good description of maximal compact subgroups as stabilizers of some points for the continuous action of $G(k)$ on its Bruhat-Tits building.

As stated in Theorem \ref{thm:equivalence:quasi:reductive}, for a semisimple $k$-group $G$, the topological group $G(k)$ has maximal pro-$p$ subgroups.
These groups are a kind of generalisation of Sylow subgroups for a finite group: in the profinite situation, a profinite group has maximal pro-$p$ subgroups and they are pairwise conjugate \cite[1.4 Prop. 3]{SerreCohomologieGaloisienne}.
By our second main theorem \ref{thm:conjugaison:maximal:pro-p}, we know that the (usually non-compact) group $G(k)$ has maximal pro-$p$ subgroups and that they are pairwise conjugate.
The use of Bruhat-Tits buildings and, in particular, of Euclidean buildings associated to pairs $(G,k)$ allows us to be more precise:
we give a useful description of maximal pro-$p$ subgroups by use of a valued root groups datum in the simply-connected case.
Thanks to this, in a further work \cite{Loisel-GenerationFrattini}, we compute the Frattini subgroup of a maximal pro-$p$ subgroup.
There will be a somewhat analogous computation as in \cite{PrasadRaghunathan1} where Prasad and Raghunathan compute the commutator subgroup of a parahoric subgroup.

\begin{Thm}\label{thm:description:maximal:pro-p}
Let $k$ be a non-Archimedean local field and $G$ a connected semisimple $k$-group.
If $P$ is a subgroup of $G(k)$,
then $P$ is a maximal pro-$p$ subgroup of $G(k)$ if, and only if, there exists an alcove $\mathbf{c} \subset X(G,k)$ such that $P$ is a maximal pro-$p$ subgroup of the stabilizer of $\mathbf{c}$.

Moreover, such an alcove $\mathbf{c}$ is uniquely determined by $P$ and the set of fixed points by $P$ in $X(G,k)$ is contained in the simplicial closure $\mathop{cl}(\mathbf{c})$ of $\mathbf{c}$.

In particular, there is a natural surjective map from the maximal pro-$p$ subgroups of $G(k)$ to the alcoves of $X(G,k)$. When $G$ is simply connected, this map is a bijection.
\end{Thm}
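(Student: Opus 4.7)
The plan is, given a maximal pro-$p$ subgroup $P$ of $G(k)$, to produce the required alcove $\mathbf{c}$ by combining a fixed-point argument in the Bruhat-Tits building with a Sylow argument in the residue-field reduction of a suitable integral model. Since $P$ is profinite, hence compact, the Bruhat-Tits fixed point theorem applied to the action of $G(k)$ on $X(G,k)$ provides a nonempty fixed-point set; choose a facet $F$ whose relative interior meets this set, so that $P \subseteq \mathrm{Stab}_{G(k)}(F)$. The associated Bruhat-Tits integral model $\mathfrak{G}_F$ yields a reduction $\mathfrak{G}_F(\mathcal{O}_k) \twoheadrightarrow \mathfrak{G}_F(\kappa)$ with pro-$p$ kernel; the image of $P$ is a $p$-subgroup of $\mathfrak{G}_F(\kappa)$ and, after further projection to the reductive quotient of the special fiber, it is contained (by Sylow's theorem) in the unipotent radical $U_B$ of some Borel subgroup $B$. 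By Bruhat-Tits theory over $\kappa$, the Borels of this reductive quotient are in canonical bijection with the alcoves $\mathbf{c} \subseteq X(G,k)$ such that $F \subseteq \mathop{cl}(\mathbf{c})$. The preimage of $U_B(\kappa)$ is a pro-$p$ subgroup containing $P$; by the maximality of $P$ it equals $P$, identifying $P$ as the maximal pro-$p$ subgroup of the Iwahori-type stabilizer $\mathrm{Stab}_{G(k)}(\mathbf{c})$.

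To establish the fixed-point claim $X(G,k)^P \subseteq \mathop{cl}(\mathbf{c})$ and the uniqueness of $\mathbf{c}$, I would argue by contradiction: if $P$ fixed some point outside $\mathop{cl}(\mathbf{c})$, then convexity of the fixed set in the CAT(0) building $X(G,k)$ would force $P$ to fix an entire geodesic leaving $\mathbf{c}$, crossing a wall at a codimension-one facet $F_1 \subset \overline{\mathbf{c}}$ and entering an alcove $\mathbf{c}_1 \neq \mathbf{c}$. The image of $P$ in the semisimple rank-one reductive quotient of $\mathfrak{G}_{F_1,\kappa}$ would then sit in two distinct Borels, those corresponding to $\mathbf{c}$ and to $\mathbf{c}_1$, hence in their intersection: an anisotropic maximal torus whose $\kappa$-points have order prime to $p$. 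This image is therefore trivial, forcing $P$ into the preimage of the unipotent radical of $\mathfrak{G}_{F_1,\kappa}$ — a strictly larger pro-$p$ subgroup of $\mathrm{Stab}_{G(k)}(F_1)$ — contradicting the maximality of $P$. Uniqueness of $\mathbf{c}$ follows because any two candidate alcoves could be connected by a minimal gallery and the same contradiction would arise at the first panel crossed.

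The converse direction and the bijectivity statements then follow. For each alcove $\mathbf{c}$, the compact group $\mathrm{Stab}_{G(k)}(\mathbf{c})$ admits a maximal pro-$p$ subgroup $P$, and any pro-$p$ overgroup $P' \supseteq P$ in $G(k)$ is contained, by Theorem~\ref{thm:equivalence:quasi:reductive} and the first two steps, in the maximal pro-$p$ subgroup of $\mathrm{Stab}_{G(k)}(\mathbf{c}')$ for some alcove $\mathbf{c}'$; the uniqueness analysis forces $\mathbf{c}' = \mathbf{c}$ and hence $P' = P$, proving that $P$ is maximal in $G(k)$ and yielding the surjection onto alcoves. In the simply connected case, $\mathrm{Stab}_{G(k)}(\mathbf{c})$ equals the parahoric $\mathfrak{G}_{\mathbf{c}}(\mathcal{O}_k)$, whose reductive quotient mod $\mathfrak{m}$ is a torus of order prime to $p$; the maximal pro-$p$ subgroup is then unique and the map is a bijection. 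The main obstacle is the cross-wall argument, which requires matching the two Borels of the rank-one reductive quotient at a panel $F_1$ with the two alcoves adjacent at $F_1$ — a matching that has to be carefully extracted from the valued root-groups datum of Bruhat-Tits theory, together with the precise description of $P$ as the preimage of a unipotent radical of a Borel.
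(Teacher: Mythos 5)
Your overall architecture matches the paper's for the existence step: a fixed point of $P$ in $X(G,k)$, the order-reversing correspondence between the star of the fixed facet $F$ and the parabolic $\kappa$-subgroups of the reductive quotient of the special fiber, and a Sylow argument to land $P$ inside the stabilizer of an alcove (this is Proposition \ref{prop:pro-p:stabilises:alcove} combined with Lemma \ref{lem:Sylow:over:finite:field}). But two steps have genuine problems. First, in the non-simply-connected case $P$ need not be contained in the parahoric $\mathfrak{G}_F(\mathcal{O}_k)$: the full stabilizer of $F$ can contain it with index divisible by $p$ (already for $\mathrm{PGL}_2$, where an element of the edge stabilizer swaps the two vertices), so \gts{the image of $P$ in $\mathfrak{G}_F(\kappa)$} is not defined. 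The paper avoids reducing $P$ at all: it lets $P$ act on the finite set $\mathcal{C}_y$ of alcoves in the star, whose cardinality is $\equiv 1 \pmod p$ by Lemma \ref{lem:Sylow:over:finite:field}, and applies Lemma \ref{lem:action:finite:pro-p}.

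Second, and more seriously, your cross-wall contradiction is inverted. The preimage in $\mathfrak{G}_{F_1}(\mathcal{O}_k)$ of the unipotent radical of the special fiber at the panel $F_1$ is the kernel $P^+_{F_1}$ of the reduction to the reductive quotient; since $\mathbf{c}$ lies in the star of $F_1$, this kernel is \emph{contained in} $P^+_{\mathbf{c}} \subseteq P$, so it is not \gts{a strictly larger pro-$p$ subgroup} and no contradiction with maximality of $P$ results from $P$ lying inside it. The contradiction you want is the opposite one: the image of $P$ in the rank-one reductive quotient at $F_1$ contains the image of $P^+_{\mathbf{c}}$, namely the nontrivial group $U_B(\kappa)$ for the Borel $B$ corresponding to $\mathbf{c}$, so this image cannot lie in the intersection of two distinct Borels, a torus whose group of $\kappa$-points has order prime to $p$. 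Once repaired, your local argument gives uniqueness among adjacent alcoves and galleries extend it; but the paper's uniqueness proof is different and more global: $P \supseteq P^+_{\mathbf{c}} \supseteq U_{\mathbf{c}}$, the group generated by the root groups $U_{a,\mathbf{c}}$, which acts transitively on the apartments containing $\mathbf{c}$, so no facet outside $\mathop{cl}(\mathbf{c})$ can be stabilized; this also yields directly that the whole fixed-point set lies in $\mathop{cl}(\mathbf{c})$, which your gallery argument only gives for fixed alcoves unless the convexity discussion is carried out carefully. Finally, in the converse direction, \gts{the uniqueness analysis forces $\mathbf{c}'=\mathbf{c}$} needs the observation that a maximal pro-$p$ subgroup $P$ of $\mathrm{Stab}_{G(k)}(\mathbf{c})$ already contains the normal pro-$p$ subgroup $P^+_{\mathbf{c}}$ (Lemma \ref{lem:pi:surjective}); the rigidity comes from $P^+_{\mathbf{c}}$, not from maximality of $P$ in $G(k)$, which is what is being proved at that point.
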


The first part of this theorem is a direct consequence of Proposition \ref{prop:pro-p:stabilises:alcove} and conjugation of $p$-Sylow subgroups in profinite groups since the stabilizer of an alcove is a profinite group by Lemma \ref{lem:parahoric:compact}(2).
To get a deeper description of maximal pro-$p$ subgroups, integral models and their reductions are useful.

\begin{Not}
Let $\Omega \subset \mathbb{A}$ be a non-empty bounded subset where $\mathbb{A}$ denotes the standard apartment of the Bruhat-Tits building $X(G,k)$.
Denote by $\mathfrak{G}_\Omega$ the corresponding smooth connected affine $\mathcal{O}_k$-model of $G$ (denoted by $\mathfrak{G}_\Omega^\circ$ in \cite{BruhatTits2} and by $\mathfrak{G}_\Omega$ in \cite{Landvogt}: they are the same $\mathcal{O}_k$-model of $G$, up to isomorphism, because they satisfy the same universal property).
Denote by $\mathfrak{G}_\Omega^\dagger$ the (possibly non-connected) smooth affine $\mathcal{O}_k$-model defined in \cite[4.6.18]{BruhatTits2} for the quasi-split case and, by descent, in \cite[5.1.8]{BruhatTits2} for the general case.
\end{Not}

Recall that if $\Omega$ satisfies a suitable notion of convexity as a subset of a polysimplicial structure (denote by $\mathop{cl} (\Omega)$ the simplicial closure defined in \cite[7.1.2]{BruhatTits1}, we assume here that $\Omega = \mathop{cl} (\Omega)$) and $G$ is semisimple, then $\mathfrak{G}_\Omega^\dagger(\mathcal{O}_k)$ is the stabilizer of $\Omega$ in $G(k)$ \cite[4.6.29, 5.1.31]{BruhatTits2}.
The group $\mathfrak{G}_\Omega(\mathcal{O}_k)$ fixes $\Omega$ pointwise and, when $G$ is simply-connected we have $\mathfrak{G}_\Omega = \mathfrak{G}_\Omega^\dagger$ \cite[5.2.9]{BruhatTits2}.
In particular, a simply-connected semisimple $k$-group acts on its Bruhat-Tits building by type-preserving isometries.

In part \ref{subsection:description:integral:models}, we will use $\mathcal{O}_k$-models (where $\mathcal{O}_k$ denotes the ring of integers of $k$) to get the following description:

\begin{Thm} \label{thm:description:models:pro-p}
Let $k$ be a non-Archimedean local field and $G$ a connected simply connected semisimple $k$-group.

A maximal pro-$p$ subgroup of $G(k)$ is conjugate to
\begin{equation*}
P_{\mathbf{c}}^+ = \mathrm{ker} \Big( \mathfrak{G}_{\mathbf{c}}(\mathcal{O}_k ) \twoheadrightarrow \overline{\mathfrak{G}}_{\mathbf{c}}^{\mathrm{red}}( \kappa ) \Big)
\end{equation*}
where $\mathbf{c} \subset \mathbb{A}$ denotes an alcove of the standard apartment,
$\kappa$ denotes the residue field of $k$ and $\overline{\mathfrak{G}}_{\mathbf{c}}^{\mathrm{red}}$ denotes the reductive quotient of the special fiber of the integral model associated to $\mathbf{c}$.
\end{Thm}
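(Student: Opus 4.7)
My plan is to combine Theorem \ref{thm:description:maximal:pro-p} with an analysis of the reduction morphism on the Iwahori subgroup $\mathfrak{G}_{\mathbf{c}}(\mathcal{O}_k)$. Since $G$ is simply connected, one has $\mathfrak{G}_{\mathbf{c}} = \mathfrak{G}_{\mathbf{c}}^\dagger$, so $\mathfrak{G}_{\mathbf{c}}(\mathcal{O}_k)$ coincides with the full stabilizer of $\mathbf{c}$ in $G(k)$. By Theorem \ref{thm:description:maximal:pro-p} every maximal pro-$p$ subgroup of $G(k)$ is a maximal pro-$p$ subgroup of such an alcove stabilizer, and by Theorem \ref{thm:conjugaison:maximal:pro-p} any two maximal pro-$p$ subgroups are conjugate in $G(k)$. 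It therefore suffices to exhibit one maximal pro-$p$ subgroup of $\mathfrak{G}_{\mathbf{c}}(\mathcal{O}_k)$ and identify it with $P_{\mathbf{c}}^+$.

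The first step is to use smoothness of $\mathfrak{G}_{\mathbf{c}}$ together with completeness of $\mathcal{O}_k$ to check that the reduction map $\pi \colon \mathfrak{G}_{\mathbf{c}}(\mathcal{O}_k) \twoheadrightarrow \mathfrak{G}_{\mathbf{c}}(\kappa)$ is surjective with pro-$p$ kernel: the successive congruence quotients modulo $\mathfrak{m}^n$ identify with $\mathrm{Lie}(\mathfrak{G}_{\mathbf{c}}) \otimes_{\mathcal{O}_k} \mathfrak{m}^n/\mathfrak{m}^{n+1}$, hence are finite $\kappa$-vector spaces, so finite abelian $p$-groups. The problem thus reduces to producing a $p$-Sylow subgroup of the finite group $\mathfrak{G}_{\mathbf{c}}(\kappa)$. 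Here the decisive structural input comes from Bruhat-Tits theory at a facet of maximal dimension (\cite[4.6.4, 5.1.31]{BruhatTits2}): the special fiber $\mathfrak{G}_{\mathbf{c},\kappa}$, which is connected because $G$ is simply connected, is solvable with reductive quotient a $\kappa$-torus, namely $\overline{\mathfrak{G}}_{\mathbf{c}}^{\mathrm{red}}$. Writing $\mathcal{U}_{\mathbf{c}}$ for the unipotent radical of the special fiber, Lang's theorem yields a short exact sequence
\begin{equation*}
1 \rightarrow \mathcal{U}_{\mathbf{c}}(\kappa) \rightarrow \mathfrak{G}_{\mathbf{c}}(\kappa) \rightarrow \overline{\mathfrak{G}}_{\mathbf{c}}^{\mathrm{red}}(\kappa) \rightarrow 1.
\end{equation*}
Over the finite field $\kappa$ of characteristic $p$, the $\kappa$-points of any torus form a group of order prime to $p$, while $\mathcal{U}_{\mathbf{c}}(\kappa)$ is a finite $p$-group; hence $\mathcal{U}_{\mathbf{c}}(\kappa)$ is the unique $p$-Sylow of $\mathfrak{G}_{\mathbf{c}}(\kappa)$. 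Consequently $P_{\mathbf{c}}^+ = \pi^{-1}(\mathcal{U}_{\mathbf{c}}(\kappa))$ is an extension of a finite $p$-group by a pro-$p$ group, hence pro-$p$, of index $|\overline{\mathfrak{G}}_{\mathbf{c}}^{\mathrm{red}}(\kappa)|$ prime to $p$ in the Iwahori, which makes it a maximal pro-$p$ subgroup of $\mathfrak{G}_{\mathbf{c}}(\mathcal{O}_k)$ and therefore, by the first paragraph, of $G(k)$.

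The main obstacle I anticipate lies in justifying the identification of the reductive quotient of the connected special fiber at an alcove as a torus in full generality, that is, in the ramified or non-quasi-split setting. Granted the split case, where root subgroups and the valued root datum are explicit, one must propagate the statement via the construction of $\mathfrak{G}_{\mathbf{c}}$ by étale descent from the quasi-split integral model, carefully checking that this descent preserves both connectedness of the special fiber and the toric nature of its reductive quotient.
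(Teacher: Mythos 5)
Your proposal is correct and follows essentially the same route as the paper: reduce via Proposition \ref{prop:pro-p:stabilises:alcove} (equivalently the first part of Theorem \ref{thm:description:maximal:pro-p}) and transitivity on alcoves to identifying the maximal pro-$p$ subgroup of the Iwahori $\mathfrak{G}_{\mathbf{c}}(\mathcal{O}_k)=\mathrm{Stab}_{G(k)}(\mathbf{c})$, then use that the reduction map is surjective with pro-$p$ kernel and that the reductive quotient of the connected special fiber at an alcove is a torus, whose $\kappa$-points have order prime to $p$. The obstacle you flag (torus-ness of $\overline{\mathfrak{G}}_{\mathbf{c}}^{\mathrm{red}}$ in the general, possibly non-quasi-split case) is exactly what the paper settles by citing \cite[4.6.12(i), 5.1.31]{BruhatTits2} for the emptiness of the root system $\Phi_{\mathbf{c}}$ and \cite[3.5.2]{TitsCorvallis} for connectedness of the special fiber.
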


This morphism $\mathfrak{G}_{\mathbf{c}}(\mathcal{O}_k ) \twoheadrightarrow \overline{\mathfrak{G}}_{\mathbf{c}}^{\mathrm{red}}( \kappa )$ and its kernel appear in several references like \cite{TitsCorvallis}.

\subsection{Acknowledgements}

The author is grateful to Bertrand R\'emy and the anonymous reviewers for the careful reading they have done of this text; and Philippe Gille, Brian Conrad and Giancarlo Lucchini-Arteche for their remarks.

The author also would like to warmly thank an anonymous referee for their comments and suggestions that led to Proposition \ref{prop:structure:pseudo:reductive} and Theorem \ref{thm:structure:quasi:reductive}.

The author is supported by the project ANR Geolie, ANR-15-CE40-0012, (The French National Research Agency).

\section{Maximal compact subgroups}

\subsection{Extensions of topological groups}

As we consider topological groups, we require that any morphism between such groups be continuous. Recall that the morphism deduced from an algebraic morphism is always continuous for the $k$-topology.

\subsubsection*{Noetherian groups}

Firstly, let us recall some properties of Noetherian groups (Definition \ref{def:noetherian:group}).

\begin{Prop}\label{prop:noetherian:groups}

\begin{enumerate}
\item[]

\item[(1)] Any open subgroup of a Noetherian group is Noetherian.

\item[(2)] A compact group is Noetherian.

\item[(3)] Let $\varphi : G \rightarrow Q$ a strict (continuous) morphism between topological groups with open image (e.g. $\varphi$ is an open morphism). If $Q$ and $\ker \varphi$ are Noetherian, then so is $G$.

\item[(4)] Any extension of Noetherian groups is a Noetherian group.

\item[(5)] The multiplicative group $k^\times$ of a non-Archimedean local field $k$ is Noetherian.

\item[(6)] Let $\psi : H \rightarrow G$ a (continuous) morphism between topological groups. If $H$ is Noetherian and $\psi(H)$ is a finite-index normal subgroup of $G$, then $G$ is Noetherian.
\end{enumerate}
\end{Prop}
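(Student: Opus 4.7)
The plan is to tackle the six items in sequence, with item~(3) as the main engine from which~(4) and~(5) drop out, and with~(6) handled by a separate hybrid argument that is the main obstacle.

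For~(1), an open subgroup of an open subgroup $H\leq G$ is automatically open in $G$, so any ascending chain of open subgroups of $H$ is a chain of open subgroups of $G$ and stabilizes. For~(2), in a compact group every open subgroup has finite index (its cosets open-cover a compact space), so an ascending chain of open subgroups yields a non-increasing sequence of positive integer indices that must stabilize; combined with inclusion, this forces equality of the subgroups. For~(3), let $(U_n)$ be an ascending chain of open subgroups of $G$: the $\varphi(U_n)$ are open in $Q$ (strictness together with the open image hypothesis), and the $U_n\cap\ker\varphi$ are open in $\ker\varphi$; both derived chains stabilize by hypothesis. A standard short-exact-sequence chase then yields $U_n=U_{n+1}$: any $u\in U_{n+1}$ admits $u'\in U_n$ with $\varphi(u)=\varphi(u')$, so $k={u'}^{-1}u\in\ker\varphi\cap U_{n+1}=\ker\varphi\cap U_n\subseteq U_n$, hence $u=u'k\in U_n$. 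Item~(4) is the special case of~(3) applied to the (open, strict) quotient map of an extension. For~(5), apply~(4) to the short exact sequence $1\to\mathcal{O}_k^\times\to k^\times\to\ZZ\to 0$ furnished by the valuation: $\mathcal{O}_k^\times$ is compact hence Noetherian by~(2), and $\ZZ$ is Noetherian by Example~\ref{ex:noetherian:group}(1).

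The main obstacle is~(6), since $\psi$ is not assumed open and $\psi(H)$ is not assumed open or closed in $G$, so one cannot realize the situation as an extension of topological groups and invoke~(3)--(4) directly. Instead I would run two parallel stabilizations starting from an ascending chain $(U_n)$ of open subgroups of $G$. First, by continuity the preimages $\psi^{-1}(U_n)$ are open in the Noetherian group $H$, so that chain stabilizes, which forces the intersection chain $(U_n\cap\psi(H))$ in $\psi(H)$ to stabilize as well. Second, since $\psi(H)$ is normal of finite index in $G$, the subgroups $U_n\psi(H)\subseteq G$ are well-defined and descend to an ascending chain $(U_n\psi(H)/\psi(H))$ inside the \emph{finite} group $G/\psi(H)$, which stabilizes by cardinality alone. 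For $n$ large enough that both chains are constant, I would conclude by coset rewriting as in~(3): for $u\in U_{n+1}$, the equality $U_{n+1}\psi(H)=U_n\psi(H)$ yields $u=u'h$ with $u'\in U_n$ and $h\in\psi(H)$, and then $h={u'}^{-1}u\in\psi(H)\cap U_{n+1}=\psi(H)\cap U_n\subseteq U_n$, whence $u\in U_n$. The \emph{delicate point} is the clean separation between the topological input (the preimage chain in $H$) and the purely set-theoretic input (the chain inside the finite coset space $G/\psi(H)$); the absence of any topology on $\psi(H)$ is precisely what prevents a formal reduction to~(3) or~(4).
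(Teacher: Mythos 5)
Your proposal is correct and follows essentially the same route as the paper: trivial (1), finite-index argument for (2), double stabilization (kernel intersection plus image) for (3), with (4) and (5) as formal consequences, and for (6) the same two stabilizations (preimages in $H$ giving stabilization of $U_n\cap\psi(H)$, plus finiteness of $G/\psi(H)$). The only difference is cosmetic: in (3) and (6) you conclude by an explicit coset chase where the paper concludes via quotient isomorphisms and an index count bounded by $[G:\psi(H)]$.
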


\begin{proof}
(1) is obvious.

(2) is clear since an open subgroup of a compact group has finite index.

(3) Since $\mathrm{Im}(\varphi)$ is open in $Q$, the subgroup $\varphi(G)$ is Noetherian by (1).
Since $\varphi$ is a strict morphism, we may and do assume that $\varphi$ is the quotient map ${G \rightarrow G / H \simeq \varphi(G)}$ where $H = \ker \varphi$.
Let $(U_n)_n$ an increasing sequence of open subgroups of $G$.
Since $H$ is Noetherian, the sequence $(U_n \cap H)_n$ is eventually constant, say from $N_1 \in \NN$.
Moreover, the sequence $\varphi(U_n) \simeq U_n H / H$ is eventually constant, say from $N_2 \geq N_1$, since $\varphi(U_n)$ is open in the Noetherian group ${\varphi(G) \simeq G / H}$.
We compute $\varphi(U_n) \simeq U_n / (U_n \cap H) \simeq U_n / (U_{N_1} \cap H) \simeq U_{N_2} / (U_{N_1} \cap H)$ for all $n \geq N_2$.
Hence $U_n = U_{N_2}$ for all $n \geq N_2$.

(4) By definition, an extension of topological groups is an exact sequence
\begin{equation*}
1 \rightarrow H \stackrel{j}{\rightarrow} G \stackrel{\pi}{\rightarrow} Q \rightarrow 1
\end{equation*}
of continuous morphisms which are open on their image.
Applying (3) to $\pi$, if $H$ and $Q$ are Noetherian, then so is $G$.

(5) is a consequence of (2) and (4) since $k^\times$ is an extension of the compact subgroup $\mathcal{O}_k^\times$ by the Noetherian discrete group $\omega(k^\times) \simeq \ZZ$.

(6) Let $(U_n)_n$ an increasing sequence of open subgroups of $G$.
Since $H$ is Noetherian, the sequence of open subgroups $\psi^{-1}(U_n)$ is eventually constant, and so is the sequence $V_n = \psi(\psi^{-1}(U_n)) = U_n \cap \psi(H)$.
The sequence of indices $[U_n:V_n] = [U_n:U_n \cap \psi(H)]$ is a sequence of integers bounded by the finite index $[G:\psi(H)]$.
Moreover, since $U_n$ is an increasing sequence and $V_n$ is eventually constant, the sequence $[U_n:V_n]$ is eventually increasing, hence eventually constant.
As a consequence, the increasing sequence $\left( U_n \right)_n$ is eventually constant.\qed
\end{proof}

\begin{Rq}\label{rq:existence:required}
A motivation to consider the Noetherian property on topological groups is that one can easily prove the existence of maximal subgroups with a given property $(P)$, as soon as we know the existence of some open subgroup satisfying the desired property $(P)$ (like in proof of \ref{prop:noetherian:implies:maximal}).

As an example, a Noetherian group with a proper open subgroup has maximal proper open subgroups, and any proper open subgroup is contained in, at least, one of them.
\end{Rq}

\subsubsection*{Morphisms of $k$-scheme and an exact sequence}

Secondly, let us recall some properties of algebraic morphisms between topological groups of rational points.

\begin{Lem}\label{lem:exact:sequence:cohomology}
Let $k$ be a non-Archimedean local field.
Let $G$ be a smooth affine algebraic $k$-group and $H$ a normal closed $k$-subgroup of $G$.

(a) There exists a faithfully flat quotient homomorphism $\pi : G \rightarrow G/H$  where $G/H$ is a smooth $k$-group.
Moreover, when $H$ is smooth, $\pi$ is smooth.

(b) The following exact sequence:
\begin{equation*}
1 \rightarrow H \overset{j}{\rightarrow} G \overset{\pi}{\rightarrow} G/H \rightarrow 1
\end{equation*}
induces an exact sequence of topological groups:
\begin{equation*}
1 \rightarrow H(k) \stackrel{j_k}{\rightarrow} G(k) \stackrel{\pi_k}{\rightarrow} (G/H)(k)
\end{equation*}
and $j_k$ is a homeomorphism onto its image.
Moreover, if $H$ is smooth, then the continuous morphism $\pi_k$ is open.

\end{Lem}

\begin{proof}
(a) The quotient morphism exists and is faithfully flat by \cite[Exp. VI A Thm 3.2 (iv)]{SGA3}.
Hence, the $k$-group $G/H$ is smooth \cite[II.§5 2.2]{DemazureGabriel}.
If, moreover $H$ is smooth, by \cite[II.§5 5.3 and II.§5 2.2]{DemazureGabriel}, the morphism $\pi$ is smooth.

(b) Morphism between $k$-schemes of finite type are continuous for the $k$-topology, and $j_k$ is a homeomorphism onto its image by definition of the $k$-topology.
Since $\pi$ is smooth, the continuous morphism $\pi_k$ is open by \cite[lemma 3.1.2 and proposition 3.1.4]{GabberGilleMoretBailly}.\qed

\end{proof}

\subsubsection*{Existence of a pro-$p$ open subgroup}

By the Remark \ref{rq:existence:required}, we need and recall the following lemma:

\begin{Lem}\label{lem:existence:open:pro-p}
Let $k$ be a non-Archimedean local field of residual characteristic $p$.
Let $G$ be a smooth affine algebraic $k$-group.
Then $G(k)$ contains a pro-$p$ open subgroup.
\end{Lem}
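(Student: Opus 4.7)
The plan is to embed $G$ as a closed $k$-subgroup of some general linear group and intersect with an explicit pro-$p$ open subgroup of the latter. Concretely, first I would invoke the standard fact that any smooth affine $k$-group admits a closed $k$-subgroup embedding $G \hookrightarrow \mathrm{GL}_{n,k}$ for some $n \geq 1$ (this is a representation-theoretic statement for affine group schemes of finite type, see \cite[Proposition A.2.3]{CGP}). By the second part of Lemma \ref{lem:exact:sequence:cohomology}, the induced map $G(k) \to \mathrm{GL}_n(k)$ is a homeomorphism onto its image and its image is closed. So it suffices to exhibit a pro-$p$ open subgroup of $\mathrm{GL}_n(k)$ and intersect it with $G(k)$.

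Next, I would consider the first principal congruence subgroup
\begin{equation*}
K_1 = \ker\bigl( \mathrm{GL}_n(\mathcal{O}_k) \twoheadrightarrow \mathrm{GL}_n(\kappa) \bigr) = I_n + \varpi \, M_n(\mathcal{O}_k),
\end{equation*}
which is open in $\mathrm{GL}_n(\mathcal{O}_k)$, hence open in $\mathrm{GL}_n(k)$. The claim is that $K_1$ is a pro-$p$ group. To see this, I would introduce the decreasing filtration $K_m = I_n + \varpi^m M_n(\mathcal{O}_k)$ for $m \geq 1$, observe that each $K_m$ is an open normal subgroup of $K_1$, that $\bigcap_m K_m = \{I_n\}$, and that the map $(I_n + \varpi^m X) \mapsto X \bmod \varpi$ induces a group isomorphism $K_m / K_{m+1} \xrightarrow{\sim} (M_n(\kappa), +)$ (the cross term $\varpi^{2m} XY$ lies in $K_{m+1}$ as soon as $m \geq 1$). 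Since $\kappa$ has characteristic $p$, each quotient $K_m/K_{m+1}$ is an elementary abelian $p$-group, so by induction every finite quotient $K_1/K_m$ is a finite $p$-group, and $K_1 = \varprojlim_m K_1/K_m$ is pro-$p$.

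Finally, I would set $U = G(k) \cap K_1$. Since $K_1$ is open in $\mathrm{GL}_n(k)$ and $G(k) \hookrightarrow \mathrm{GL}_n(k)$ is a topological embedding, $U$ is open in $G(k)$. On the other hand, $U$ is a closed subgroup of the pro-$p$ group $K_1$, and any closed subgroup of a pro-$p$ group is pro-$p$ (it is profinite, and its images in the finite $p$-group quotients of the ambient group are $p$-groups). Hence $U$ is a pro-$p$ open subgroup of $G(k)$, as required.

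The only non-trivial ingredient is the embedding into $\mathrm{GL}_{n,k}$; the rest is a direct computation with congruence subgroups of matrix groups and elementary facts about profinite groups. In particular smoothness plays essentially no role beyond ensuring that $G$ is a well-behaved affine $k$-group scheme, and the argument is insensitive to the characteristic of $k$.
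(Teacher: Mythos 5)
Your proposal is correct and follows essentially the same route as the paper: embed $G$ as a closed $k$-subgroup of $\mathrm{GL}_{n,k}$, show the first principal congruence subgroup is pro-$p$ via the filtration by higher congruence subgroups, and intersect with $G(k)$. The only cosmetic difference is that you identify each graded quotient explicitly with $(M_n(\kappa),+)$, whereas the paper checks directly that $p$-th powers fall into the next filtration step; both are fine.
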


\begin{proof}
Given a closed immersion $G \rightarrow \mathrm{GL}_{n,k}$ (such an immersion exists \cite[II.5.5.2]{DemazureGabriel}), the topological group $G(k)$ can be seen as a closed subgroup ${G(k) \subset \mathrm{GL}_n(k)}$ endowed with the usual topology.
Hence, it is sufficient to prove that $\mathrm{GL}_n(k)$ contains a pro-$p$ open subgroup $U$, since $U \cap G(k)$ will be a pro-$p$ open subgroup of $G(k)$.

The group $H = \mathrm{GL}_n(\mathcal{O}_k)$ is profinite since it is a totally disconnected compact group.
For $d \in \NN^*$, define
$$H_d = \mathrm{GL}_n(\mathfrak{m}^d) = \left\{g \in \mathrm{GL}_n(\mathcal{O}_k)\,,\,g - \mathrm{id} \in \mathfrak{m}_K^d \mathcal{M}_n(\mathcal{O}_k) \right\}$$

The $H_d$ are normal compact open subgroups of $\mathrm{GL}_n(\mathcal{O}_k)$, and form a basis of open neighbourhoods of $\mathrm{id} \in H$.
Moreover, they are pro-$p$ groups in the same way as \cite[5.1]{DixonDuSautoyMannSegal} for $\mathrm{GL}_n(\mathbb{Z}_p)$.

\textbf{Claim:} $H_1 = \varprojlim_{d} H_1 / H_d$ is a pro-$p$-group.

For any $x \in H_d$, write $x = \mathrm{id} + y$ where $y \in \mathfrak{m}^d \mathcal{M}_n(\mathcal{O}_k)$.
Hence $x^p = \mathrm{id} + p y + \sum_{k=2}^p \binom{p}{k} y^k$.
If $k \geq 2$, then $y^k \in \mathfrak{m}_K^{d+1} \mathcal{M}_n(\mathcal{O}_k)$ because $d \geq 1$.
If $\mathrm{char}(k) = p$, then $py = 0$.
Else, $\mathrm{char}(k) = 0$ and $p \in \mathfrak{m}$.
Hence $p y \in \mathfrak{m}^{d+1} \mathcal{M}_n(\mathcal{O}_k)$, so $H_d / H_{d+1}$ is a $p$-group.\qed
\end{proof}

\subsection{Compact and open subgroups of a semisimple group}

In this section, we assume that $G$ is an affine smooth connected semisimple $k$-group where $k$ is a non-Archimedean local field.
In Proposition \ref{prop:description:compact:maximal}, we describe maximal compact subgroups as stabilizers of, uniquely defined, points of the building. This is still true if we only assume that $G$ is reductive.
We do not assume, in general, that $G$ is simply connected and some consequences of this additional assumption will be given.
Such a group $G(k)$ acts continuously and strongly transitively on its affine Bruhat-Tits building (with a type-preserving action when $G$ is, moreover, simply connected).
We denote by $\mathbb{A}$ the standard apartment, by $\mathbf{c}$ a chosen alcove in $\mathbb{A}$ and by $G^+$ the subgroup of $G(k)$ consisting of the type-preserving elements.

Define $B = \mathrm{Stab}_{G(k)}(\mathbf{c})$ the setwise stabilizer of $\mathbf{c}$ and $B^+ = \mathrm{Stab}_{G^+}(\mathbf{c})$ the pointwise stabilizer of $\mathbf{c}$.
Define $N = \mathrm{Stab}_{G(k)}(\mathbb{A})$ the setwise stabilizer of $\mathbb{A}$ in $G(k)$ and $N^+ = \mathrm{Stab}_{G^+}(\mathbb{A})$ the setwise stabilizer of $\mathbb{A}$ in ${G}^+$.
Thus, $(B,N)$ is a generalised BN-pair of $G(k)$ (see \cite[5.5 and 14.7]{Garrett} for details).
Define ${T = B \cap N}$ and ${T^+ = B^+ \cap N^+}$,
and put ${W}^+ = {N}^+ / {T}^+$.
The set $\Theta = T / {T}^+$ is finite \cite[5.5]{Garrett} and we have a Bruhat decomposition $\displaystyle G(k) = \bigsqcup_{t \in \Theta\,,\,w \in {W}^+} {B}^+ t w {B}^+$.
Define the following bornology on $G(k)$ by:

\begin{Def,Prop}[from {\cite[14.7]{Garrett}}]\label{def:bounded:subset}
A subset $H \subset G(k)$ is called \textbf{bounded} if $H$ satisfies the following equivalent properties:
\begin{enumerate}
\item[(i)] $H$ is contained in a finite union of double cosets $B^+ t w B^+$,
where $t \in \Theta$ and $w \in {W}^+$,
\item[(ii)] there exists a point $x \in X(G,k)$ such that $H \cdot x \subset X(G,k)$ is bounded,
\item[(iii)] for any bounded subset $Y \subset X(G)$, the subset $H \cdot Y = \{ h \cdot y\,,\,h\in H \text{ and }y \in Y \} \subset X(G,k)$ is bounded.
\end{enumerate}
\end{Def,Prop}

Given an embedding $G(k) \rightarrow \mathrm{GL}_n(k)$, there is a natural definition of bounded subsets, provided by the canonical metric on $\mathrm{GL}_n(k)$.
One can note that both definitions coincide. 

\begin{Lem}\label{lem:parahoric:compact}
Under the above assumptions and notations:
\begin{enumerate}

\item[(1)] For any non-empty subset $\Omega \subset \mathbb{A}$, the pointwise stabilizer of $\Omega$ in $G(k)$ is compact.

\item[(2)] For any non-empty bounded subset $\Omega \subset X(G,k)$, the setwise stabilizer of $\Omega$ in $G(k)$ is compact.

\end{enumerate}
\end{Lem}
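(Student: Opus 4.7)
The plan is to handle the three parts in reverse order: (3) is the bornological heart of the lemma, (2) is an immediate corollary of (3), and (1) then follows from (2) together with the standard discreteness of the extended affine Weyl group in the affine isometry group of $\mathbb{A}$.

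For (3), I would pick any point $x$ in the non-empty bounded subset $\Omega$; then $\mathrm{Stab}_{G(k)}(\Omega) \cdot x \subset \Omega$ is bounded in $X(G,k)$, so criterion (ii) of Definition-Proposition \ref{def:bounded:subset} yields that $\mathrm{Stab}_{G(k)}(\Omega)$ is bounded in the BN-pair bornology on $G(k)$. By the remark made just before the lemma, this bornology coincides with the metric one induced by a closed embedding $G(k) \hookrightarrow \mathrm{GL}_n(k)$, for which bounded subsets are relatively compact; combining this with the facts that $G(k)$ is closed in $\mathrm{GL}_n(k)$ and that $\mathrm{Stab}_{G(k)}(\Omega)$ is closed in $G(k)$ (by continuity of the action on $X(G,k)$), we conclude that it is compact. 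Part (2) is then immediate: the pointwise stabilizer of $\Omega \subset \mathbb{A}$ is a closed subgroup of $\mathrm{Stab}_{G(k)}(\{x\})$ for any chosen $x \in \Omega$, and the latter is compact by (3) applied to the bounded singleton $\{x\} \subset X(G,k)$.

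For (1), the action of $N$ on $\mathbb{A}$ is by affine isometries and factors through $N \twoheadrightarrow N/T^+$, whose image is the extended affine Weyl group. The kernel of this action is exactly $T^+$: on one hand, any $g \in G(k)$ that fixes $\mathbb{A}$ pointwise is automatically type-preserving, fixes $\mathbf{c}$ pointwise and stabilizes $\mathbb{A}$ setwise, so lies in $B^+ \cap N^+ = T^+$; conversely, any $t \in T^+$ acts on $\mathbb{A}$ as an affine isometry fixing the non-empty open alcove $\mathbf{c}$ pointwise, hence trivially on all of $\mathbb{A}$. Part (2) applied to the apartment already gives that $T^+$ is compact. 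The extended affine Weyl group acts properly discontinuously on $\mathbb{A}$ as a discrete subgroup of its affine isometry group; this is the foundational input from Bruhat-Tits theory that I would cite rather than reprove, and is the main non-bookkeeping step. For any compact $K \subset \mathbb{A}$, the set $\{w \in N/T^+ : wK \cap K \neq \emptyset\}$ is therefore finite, and its preimage in $N$ is a finite union of cosets of the compact group $T^+$, hence itself compact. This is exactly properness of the $N$-action on $\mathbb{A}$.
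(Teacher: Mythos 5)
Your reduction of (2) to (3) and your treatment of (1) (identifying the kernel of the action on $\mathbb{A}$ with $T^+$, then invoking proper discontinuity of the extended affine Weyl group) are fine in themselves, but the whole structure rests on your proof of (3), and that is where there is a genuine gap: it is circular. The only substantive step in your argument for (3) is the claim that a subset of $G(k)$ that is bounded in the BN-pair sense of Definition-Proposition \ref{def:bounded:subset} is relatively compact. You source this to the remark preceding the lemma (``one can note that both definitions coincide''), but that remark is an unproven aside, not established prior material, and the direction you need --- BN-bounded implies metrically bounded, hence relatively compact --- is exactly Lemma \ref{lem:bounded:versus:compact}(1), which the paper \emph{deduces from} Lemma \ref{lem:parahoric:compact}: its proof begins by recalling that $B^+=P_{\mathbf{c}}$ is compact by the very lemma you are trying to prove. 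Concretely, to show that a finite union of double cosets $B^+twB^+$ is metrically bounded in $\mathrm{GL}_n(k)$ one must first know that $B^+$, the pointwise stabilizer of an alcove, is bounded --- which is an instance of parts (2)/(3). So your argument assumes the essential content of the statement.

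The paper avoids this by going in the opposite order, (1)$\Rightarrow$(2)$\Rightarrow$(3), with the compactness input supplied by Landvogt's explicit description rather than by the bornology: for $x\in\mathbb{A}$ one has $P_x=\langle U_x,N_x\rangle$, realized as the image under multiplication of $N_x\times U_xZ(k)_b$; the factor $N_x$ is compact by (1) (stabilizers in $N$ are compact because $\ker\nu=Z(k)_b$ is compact and the point stabilizer in $\nu(W)$ is finite), and $U_xZ(k)_b$ is compact by \cite[12.12(i)]{Landvogt}. Part (3) then follows because the setwise stabilizer of a bounded $\Omega$ is an extension of a finite group (acting on the finite subcomplex spanned by $\Omega$, by local finiteness of the building) by the compact pointwise stabilizer. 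If you want to keep your top-down route, you must replace the appeal to the coinciding bornologies by an independent proof that the alcove stabilizer is compact --- for instance via the integral model, $B^+=\mathfrak{G}_{\mathbf{c}}(\mathcal{O}_k)$ being the $\mathcal{O}_k$-points of a smooth affine $\mathcal{O}_k$-scheme --- at which point your proof of (3) becomes essentially the paper's Lemma \ref{lem:bounded:versus:compact}. A secondary quibble: your claim that $\mathrm{Stab}_{G(k)}(\Omega)$ is closed ``by continuity'' only gives $g\Omega\subset\overline{\Omega}$ in the limit; closedness of the setwise stabilizer of a general bounded $\Omega$ is most cleanly seen via the finite-subcomplex argument the paper uses.
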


\begin{proof}
Since $G$ is semisimple, we know by \cite[2.2]{TitsCorvallis} that $G(k)$ acts properly on $X(G,k)$ that is locally finite.

(1) In particular, the stabilizer $P_x$ of a point $x \in \mathbb{A}$ is compact (by construction).
Hence, the pointwise stabilizer of $\Omega$ written $P_\Omega = \bigcap_{x \in \Omega} P_x$ \cite[13.3(i) and 12.8]{Landvogt} is compact.

(2) If $x \in X(G,k)$, then there exists $g \in G(k)$ such that $g \cdot x \in \mathbb{A}$ and it gives $\mathrm{Stab}_{G(k)}(x) = g^{-1} P_{g \cdot x} g$.
This does not depend on the choice of such a $g \in G(k)$.
Consider $\Omega \subset X(G,k)$ a non-empty subset bounded subset.
It is finite as cellular complex since $X(G,k)$ is locally finite).
By (2), the pointwise stabilizer of $\Omega$ is compact.
Since $\Omega$ is finite, its setwise stabilizer is compact.\qed
\end{proof}

As a consequence of this lemma, bounded subsets are closely linked to compact subsets.

\begin{Lem}\label{lem:bounded:versus:compact}
Under the above assumptions and notations:

\begin{enumerate}
\item[(1)] Every bounded subset of $G(k)$ is relatively compact.
\item[(2)] A subset of $G(k)$ is compact if, and only if, it is closed and bounded.
\item[(3)] Every maximal bounded subgroup of $G(k)$ is a maximal compact subgroup. 
\end{enumerate}
\end{Lem}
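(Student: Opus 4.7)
The strategy is to bridge three a priori different notions: the algebraic bornology on $G(k)$ coming from the Bruhat decomposition, topological compactness in the locally compact group $G(k)$, and geometric boundedness in the building $X(G,k)$. The single key input is Lemma \ref{lem:parahoric:compact}: applied to the nonempty bounded set $\mathbf{c} \subset \mathbb{A}$, it gives that the pointwise stabilizer $B^+$ is compact (and $B^+$ is closed in $G(k)$ since $G^+$ has finite index).

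For (1), I would fix representatives $\tilde{t} \in T$ and $\tilde{w} \in N^+$ of each $t \in \Theta$ and each $w \in W^+$ appearing, and observe that the double coset $B^+ \tilde{t} \tilde{w} B^+$ is the image of the compact space $B^+ \times B^+$ under the continuous multiplication map $(b_1, b_2) \mapsto b_1 \tilde{t} \tilde{w} b_2$, hence compact. By characterization (i) of Definition-Proposition \ref{def:bounded:subset}, any bounded $H$ lies in a finite union of such cosets, so $H$ is contained in a compact subset of $G(k)$; since $G(k)$ is Hausdorff, $\overline{H}$ is then compact.

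For (2), one implication is immediate from (1). For the converse, given a compact $H$, I would invoke characterization (ii): pick any $x \in X(G,k)$, note that $H \cdot x$ is the continuous image of $H$ under the orbit map, hence compact in $X(G,k)$; because the Bruhat--Tits building of a semisimple group over a local field is locally finite, any compact subset is contained in a finite subcomplex and is therefore bounded in the polysimplicial sense.

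For (3), given a maximal bounded subgroup $H$, I would note that $\overline{H}$ is again a subgroup (this holds in any topological group) and is compact by (1), therefore bounded by (2); maximality of $H$ forces $H = \overline{H}$, so $H$ is closed and bounded, hence compact by (2). Conversely, any compact subgroup $K \supset H$ is bounded by (2), so maximality of $H$ among bounded subgroups gives $K = H$, yielding maximality among compact subgroups. The only step where something could go wrong is the ``compact implies bounded'' direction in (2); this rests on local finiteness of $X(G,k)$, which I would state explicitly and cite rather than take for granted.
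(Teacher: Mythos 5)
Your proof is correct and follows the same overall architecture as the paper's: compactness of the double cosets gives (1), (2) reduces to showing compact implies bounded, and (3) is the closure argument combined with (2). The one step where you genuinely diverge is the converse direction of (2). The paper uses the fact that $B^+$ is not only compact but \emph{open} in $G(k)$ (citing Landvogt), so the double cosets $B^+twB^+$ form an open cover of the compact set $H$, and a finite subcover yields characterization (i) of Definition-Proposition \ref{def:bounded:versus:compact} directly. You instead pass through characterization (ii) via the orbit map into the building: $H\cdot x$ is a continuous image of a compact set, hence compact, hence metrically bounded in $X(G,k)$. Both routes are valid; the paper's stays entirely inside the group and needs the openness of the Iwahori subgroup as an extra input, while yours needs only continuity of the action. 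One small remark: your appeal to local finiteness of $X(G,k)$ at that point is superfluous (and you are right to flag it as the delicate spot) --- a compact subset of any metric space is automatically bounded, so no finiteness of the complex is required. Likewise, in (1) the closedness of $B^+$ follows simply from its compactness in the Hausdorff group $G(k)$; the finite index of $G^+$ is not needed there.
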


\begin{proof}
Recall that $B^+ = P_{\mathbf{c}}$ is compact by Lemma \ref{lem:parahoric:compact} and open in $G(k)$ by \cite[12.12 (ii)]{Landvogt}.
Hence, every double coset $B^+ t w B^+$ is a compact open subset of $G(k)$.

(1) If $H \subset G(k)$ is bounded, then by Definition \ref{def:bounded:subset}(i) $H$ is contained in a finite union of double cosets, and this union is a compact subset.

(2) If $H$ is a compact subset of $G(k)$, then $H$ is closed in $G(k)$.
The open cover of $H$ by double cosets has a finite subcovering.
By Definition \ref{def:bounded:subset}(i), $H$ is bounded.
Conversely, a bounded subset is compact when it is closed, by (1).

(3) If $H$ is a maximal bounded subgroup, then $\overline{H}$ is a closed subgroup.
It is bounded by Definition \ref{def:bounded:subset}(ii) and contains $H$.
Hence, maximality of $H$ implies $H = \overline{H}$ is a maximal compact subgroup, because every compact subgroup is bounded according to (2).\qed
\end{proof}

Recall that a metric space is said to be $\mathrm{CAT}(0)$ if it is geodesic (any two points are connected by a continuous path parametrized by distance) and if any geodesic triangle is at least as thin as in the Euclidean plane (for the same edge lengths).
This notion is developed in the book of Bridson and Haefliger \cite{BridsonHaefliger}.
The latter condition is a non-positive curvature one (called also $\mathrm{(NC)}$ in \cite[§VI.3B]{Brown}), which can also be formulated by requiring the parallelogram inequality \cite[Prop. 11.4]{AbramenkoBrown}.
We use the following fixed-point theorem to describe compact open subgroups thanks to the metric space $X(G,k)$.

\begin{Thm}[{Bruhat-Tits fixed point theorem \cite[VI.4]{Brown}}]\label{thm:fixed:point:theorem}
Let $H$ be a group acting isometrically on a complete CAT(0) metric space $(M,d)$.
If $M$ has a $H$-stable non-empty bounded subset, then $H$ fixes a point in $M$.
\end{Thm}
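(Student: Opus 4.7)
The plan is to exploit the CAT(0) inequality to construct, given any non-empty bounded $H$-stable subset $B \subset M$, a canonical point (the circumcenter of $B$) which must be fixed by $H$ by uniqueness.

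First, for any non-empty bounded subset $B \subset M$, I would introduce the circumradius function $r_B : M \to \RR_{\geq 0}$, $r_B(x) = \sup_{b \in B} d(x,b)$. This function is $1$-Lipschitz, hence continuous, and $\rho := \inf_{x \in M} r_B(x)$ is a finite non-negative real called the circumradius of $B$. The key step is to show that $r_B$ attains $\rho$ at a \emph{unique} point $c \in M$, the circumcenter of $B$.

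For existence and uniqueness I would use the parallelogram (median) inequality characterizing CAT(0) spaces: for any $x, y \in M$ and any $b \in M$, if $m$ denotes the midpoint of the geodesic from $x$ to $y$, then
\begin{equation*}
d(b,m)^2 \leq \tfrac{1}{2}\, d(b,x)^2 + \tfrac{1}{2}\, d(b,y)^2 - \tfrac{1}{4}\, d(x,y)^2.
\end{equation*}
Taking the supremum over $b \in B$ yields $r_B(m)^2 \leq \tfrac{1}{2} r_B(x)^2 + \tfrac{1}{2} r_B(y)^2 - \tfrac{1}{4} d(x,y)^2$. Pick a minimising sequence $(x_n)$ with $r_B(x_n) \to \rho$. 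Applying the inequality above to $x_m, x_n$ and using $r_B(m_{m,n}) \geq \rho$, we get $\tfrac{1}{4} d(x_m, x_n)^2 \leq \tfrac{1}{2} r_B(x_m)^2 + \tfrac{1}{2} r_B(x_n)^2 - \rho^2 \to 0$, so $(x_n)$ is Cauchy. Completeness of $M$ gives a limit $c$, and continuity of $r_B$ gives $r_B(c) = \rho$. Uniqueness follows by the same computation applied to two minimisers $c, c'$: the midpoint would satisfy $r_B(m) < \rho$ unless $c = c'$.

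Finally, since $H$ acts by isometries on $M$ and stabilises $B$ setwise, a direct computation gives $r_B(h \cdot x) = r_B(x)$ for every $h \in H$ and $x \in M$. Hence $H$ permutes the set of minimisers of $r_B$, which is the singleton $\{c\}$ by the previous step, so $h \cdot c = c$ for all $h \in H$. The main technical obstacle is the Cauchy argument above; everything else is a formal consequence of the uniqueness of the circumcenter together with $H$-equivariance of $r_B$.
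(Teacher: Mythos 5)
Your proof is correct and is the standard circumcenter argument (minimise the circumradius $r_B$, use the CAT(0) median inequality to show minimising sequences are Cauchy and the minimiser is unique, then conclude by $H$-equivariance); the paper itself imports this theorem without proof from Brown's \emph{Buildings} VI.4, where essentially this same argument is given. No gaps: the Cauchy estimate, the uniqueness via the midpoint of two minimisers, and the identity $r_B(h\cdot x)=r_B(x)$ for $h\in H$ are all correctly justified.
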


The following corollary is a immediate consequence of the fixed point theorem and the Definition \ref{def:bounded:subset}(iii).

\begin{Cor}\label{cor:bounded:fixed:point}
If $H$ is a bounded subgroup of $G(k)$, then $H$ fixes a point of $X(G,k)$.
\end{Cor}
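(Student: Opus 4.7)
The goal is to exhibit a nonempty bounded $H$-stable subset of the Bruhat-Tits building $X(G,k)$ and then invoke the Bruhat-Tits fixed point theorem. The building is a complete $\mathrm{CAT}(0)$ metric space and $G(k)$ acts on it by isometries, so $H$ does as well.

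The plan is to take any point $x_0 \in X(G,k)$ (such a point exists since the building is nonempty) and consider its orbit $Y = H \cdot \{x_0\}$. The singleton $\{x_0\}$ is trivially bounded in $X(G,k)$, and by hypothesis $H \subset G(k)$ is bounded, so criterion (iii) of Definition/Proposition \ref{def:bounded:subset} applies and gives that $Y = H \cdot \{x_0\}$ is a bounded subset of $X(G,k)$. The set $Y$ is manifestly $H$-stable, as $H \cdot Y = H \cdot (H \cdot x_0) = H \cdot x_0 = Y$. It is also nonempty, since $x_0 \in Y$.

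Applying Theorem \ref{thm:fixed:point:theorem} to the isometric action of $H$ on the complete $\mathrm{CAT}(0)$ space $X(G,k)$, with the $H$-stable nonempty bounded subset $Y$, yields a point of $X(G,k)$ fixed by $H$. There is no real obstacle here: the corollary is essentially a one-line consequence, and the only thing to check is that the bornology on $G(k)$ introduced in Definition \ref{def:bounded:subset} is genuinely compatible with the metric bornology on $X(G,k)$, which is precisely the content of item (iii) in that definition.
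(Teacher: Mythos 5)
Your proof is correct and is exactly the argument the paper intends: the paper dismisses this corollary as an immediate consequence of Theorem \ref{thm:fixed:point:theorem} and Definition \ref{def:bounded:subset}(iii), and your orbit $H\cdot x_0$ is precisely the nonempty bounded $H$-stable set that makes this work.
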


Let us give a proof of the following proposition:

\begin{Prop}\label{prop:description:compact:maximal}
Let $k$ be a non-Archimedean local field and $G$ a semisimple $k$-group.
Let $P$ be a subgroup of $G(k)$.
The following are equivalent:
\begin{enumerate}
\item[(i)] the subgroup $P$ is a maximal compact subgroup of $G(k)$,
\item[(ii)] the subgroup $P$ fixes a unique point $x \in X(G,k)$ and $P = \mathrm{Stab}_{G(k)}(x)$.
\end{enumerate}

Moreover, if $G$ is simply connected, such an $x$ is a vertex in the simplicial complex $X(G,k)$.
\end{Prop}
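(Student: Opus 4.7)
The approach is to use the Bruhat--Tits fixed-point mechanism (through Corollary~\ref{cor:bounded:fixed:point}) together with the fact that stabilizers of bounded subsets of $X(G,k)$ are compact (Lemma~\ref{lem:parahoric:compact}(3)) in order to translate between compactness in $G(k)$ and fixation in the building, and then to exploit the polysimplicial and parahoric structure of $X(G,k)$ to pin down a unique fixed point.

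For (ii)~$\Rightarrow$~(i), the hypothesis gives $P = \mathrm{Stab}_{G(k)}(x)$, which is compact by Lemma~\ref{lem:parahoric:compact}(3) applied to $\Omega = \{x\}$. If $P' \supseteq P$ is any compact subgroup, then $P'$ is bounded by Lemma~\ref{lem:bounded:versus:compact}(2), so Corollary~\ref{cor:bounded:fixed:point} provides a point $y \in X(G,k)$ fixed by $P'$; since $P \subseteq P'$ also fixes $y$, the uniqueness hypothesis forces $y = x$, whence $P' \subseteq \mathrm{Stab}_{G(k)}(x) = P$ and equality holds.

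For (i)~$\Rightarrow$~(ii), a maximal compact subgroup $P$ is bounded by Lemma~\ref{lem:bounded:versus:compact}(2), so Corollary~\ref{cor:bounded:fixed:point} yields some $x \in X(G,k)$ fixed by $P$; then $P \subseteq \mathrm{Stab}_{G(k)}(x)$, the latter is compact by Lemma~\ref{lem:parahoric:compact}(3), and maximality forces equality. The delicate point is uniqueness of $x$. The fixed-point set $F$ of $P$ in $X(G,k)$ is a non-empty closed convex subset (since $X(G,k)$ is $\mathrm{CAT}(0)$ and $P$ acts by isometries), and it is saturated under the polysimplicial structure: if $P$ fixes an interior point of a facet $\tau$ then $P$ pointwise fixes $\overline{\tau}$. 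If $F$ contained some $y \neq x$, the geodesic $[x,y]$ would lie in $F$ and would have to meet the interior of a facet $\sigma$ of positive dimension, so $\overline{\sigma} \subseteq F$; taking any vertex $v$ of $\sigma$, we would have $P \subseteq \mathrm{Stab}_{G(k)}(v)$, which is compact by Lemma~\ref{lem:parahoric:compact}(3), and maximality would give $P = \mathrm{Stab}_{G(k)}(v)$. But a vertex stabilizer has $v$ as its only fixed point in $X(G,k)$, because its reduction acts without nontrivial fixed point on the link at $v$ (the spherical building of the residual reductive quotient). This would contradict $F \supsetneq \{v\}$, yielding uniqueness.

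For the simply connected addendum: the action is now type-preserving, so $\mathrm{Stab}_{G(k)}(x)$ coincides with the pointwise stabilizer of the facet $\sigma$ containing $x$ in its interior. If $\sigma$ had positive dimension, Bruhat--Tits theory would provide a vertex $v$ of $\sigma$ giving a strict inclusion of compact subgroups $\mathrm{PointwiseStab}(\overline{\sigma}) \subsetneq \mathrm{Stab}_{G(k)}(v)$ (the parahoric at $v$ strictly contains the Iwahori-type stabilizer of a higher-dimensional facet through it), contradicting the maximality of $P$; hence $\sigma$ must be a vertex. I expect the main obstacles to be the assertion that a vertex stabilizer has a unique fixed point (needed for uniqueness in the general case) and the strict inclusion of parahorics attached to a vertex versus a positive-dimensional facet; both rest on finer integral and residual Bruhat--Tits input rather than on purely $\mathrm{CAT}(0)$ or topological arguments.
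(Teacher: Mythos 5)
Your direction (ii)~$\Rightarrow$~(i) and the existence half of (i)~$\Rightarrow$~(ii) are correct and coincide with the paper's argument (fixed-point theorem via Corollary~\ref{cor:bounded:fixed:point}, compactness of point stabilizers via Lemma~\ref{lem:parahoric:compact}(3), maximality). The problem is the uniqueness step. Your ``saturation'' claim --- that if $P$ fixes an interior point of a facet $\tau$ then $P$ fixes $\overline{\tau}$ pointwise --- is false for a general semisimple $G$, because the action need not be type-preserving: the stabilizer in $\mathrm{PGL}_2(k)$ of the midpoint of an edge of the tree fixes that interior point but swaps the two endpoints of the edge. (The claim is fine when $G$ is simply connected, but the proposition is stated in general, and Remark~\ref{rq:isolated:maximal:bounded} shows that maximal compacts genuinely occur as stabilizers of non-vertex barycentres.) Without saturation, the fixed-point set $F$ of $P$ need not contain any vertex of a facet $\sigma$ it meets: $P$ only stabilizes $\sigma$ setwise, and for instance on a square facet (type $\tilde{A}_1\times\tilde{A}_1$) an element inducing the reflection across a midline fixes a positive-length segment while fixing no vertex. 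So the step ``taking any vertex $v$ of $\sigma$, we would have $P\subseteq \mathrm{Stab}_{G(k)}(v)$'' does not go through, and the whole reduction to vertex stabilizers collapses. On top of that, the assertion that a vertex stabilizer has $v$ as its unique fixed point is stated but not proved; as you yourself note, this is exactly where the residual Bruhat--Tits input is needed, so it cannot be taken as given --- it \emph{is} the content of the uniqueness statement.

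The paper avoids both issues by arguing directly inside an apartment: it runs a decreasing induction on the number $d(x)$ of walls of $\mathbb{A}$ through $x$. The base case is a special vertex, where the root groups $U_{a,x}$ and $U_{-a,x}$ are used: the common fixed locus in $\mathbb{A}$ of $U_{a,x}\cup U_{-a,x}$ is the corresponding wall, so the intersection over all $a\in\Phi$ is $\{x\}$. In the inductive step, if $P_x$ fixed a second point $y$, then it fixes the segment $[x,y]$; either $[x,y]$ crosses a wall, producing a fixed point $z$ with $d(z)>d(x)$ and hence (by the inductive hypothesis) a compact group $P_z$ strictly containing $P_x$, or $x$ and $y$ lie in a common facet and one again produces a boundary point $z$ with $d(z)>d(x)$ --- both contradicting maximality of $P_x$. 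If you want to keep your link-based strategy, you would need to (a) replace ``any vertex of $\sigma$'' by a point of $\overline{\sigma}$ actually fixed by $P$ (e.g.\ the barycentre of a $P$-orbit of vertices, or a boundary point of $F\cap\overline{\sigma}$) and (b) actually prove the no-fixed-point statement for the residual action on the link; at that point you have essentially reconstructed the paper's induction.
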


\begin{proof}
$(i) \Rightarrow (ii)$
If $P$ is a maximal bounded subgroup, then, by Corollary \ref{cor:bounded:fixed:point}, $P$ fixes a point $x \in X(G,k)$.
Hence, $P$ is a subgroup of $\mathrm{Stab}_{G(k)}(x)$ which is a bounded subgroup by \ref{lem:parahoric:compact}(2).
We get $P = \mathrm{Stab}_{G(k)}(x)$ because of the maximality assumption on $P$.

We have to show that the maximality of $P$ implies that $\mathrm{Stab}_{G(k)}(x)$ does not fix any other point of $X(G,k)$.
Let $A$ be an apartment containing $x$.
Denote by $\mathcal{H}$ the set of walls in $A$.
Let us prove that the maximal bounded subgroup $\mathrm{Stab}_{G(k)}(x)$ has a unique fixed point in $A$.

By transitivity, there exists an element $g \in G(k)$ such that $g \cdot A = \mathbb{A}$. Hence, we have to show that $g \mathrm{Stab}_{G(k)}(x) g^{-1} = P_{g\cdot x}$ fixes a unique point of $\mathbb{A}$.
We can and do assume that $x\in \mathbb{A}$ and $g=1$.

Denote by $F$ the facet containing $x$.
By contradiction, assume that $P_x$ fixes another point $y \in \mathbb{A}$.
Then we have $P_x \subset P_y$ and this should be an equality by maximality of $P_x$.
If $y \not\in F$, then there exists a wall $H \in \mathcal{H}$ such that $y \in H$ and $x \not\in H$. Consider the affine root $a+l$ such that the associated half-apartment $D(a,l)$, with boundary $H$, does not contain $x$.
The set of points of $\mathbb{A}$ fixed by the root group $U_{a,l}$ is exactly the half-apartment $D(a,l)$ \cite[13.3 (ii)]{Landvogt}.
The root group $U_{a,l}$ fixes $y$ but not $x$, and is contained in $P_y$.
Therefore, the inclusion $P_x \subset P_y$ is not an equality and we get a contradiction.
Hence, $y$ and $x$ have to be on the same facet $F$.
The action being isometric and $[x,y]$ being metrically characterized \cite[Prop. 11.5]{AbramenkoBrown}, $P_x$ fixes the line segment $[x,y]$.
If $x \neq y$, since the action is continuous and preserves the polysimplicial structure, the group $P_x$ fixes $\overline{F} \cap (x,y)$.
Hence $P$ fixes a point $z \in \overline{F} \setminus F$.
We obtain a contradiction by considering the fixed point $z \not\in F$.

$(ii) \Rightarrow (i)$
Conversely, let $x \in X(G,k)$ be such that the group $P = \mathrm{Stab}_{G(k)}(x)$ has a unique fixed point.
If $P'$ is a bounded subgroup containing $P$, and ${y \in X(G,k)}$ a point fixed by $P'$, then $P$ fixes $y$ and $y = x$ because of uniqueness.
Hence $P' \subset \mathrm{Stab}_{G(k)}(x) = P$.

Moreover, if $G$ is simply connected, the stabilizer of a facet fixes it pointwise \cite[5.2.9]{BruhatTits2}.
Because of the above equivalence, a maximal bounded subgroup is exactly the stabilizer of a vertex of $X(G,k)$.\qed
\end{proof}

\begin{Rq}\label{rq:isolated:maximal:bounded}
By uniqueness of the fixed point, we get an injective map from the set of maximal bounded subgroups of $G(k)$ to the set of points in $X(G,k)$.
Denote by $X(G)_{\max}$ the image of this map.
It is easy to remark that $X(G,k)_{\max}$ contains the vertices of the polysimplicial complex $X(G,k)$.

Moreover, it is easy to see that every $x \in X(G,k)_{\max}$ is the center of mass of its facet $F$, because the stabilizer in $G(k)$ of $x$ acts by isometries on $F$ and $x$ is the only fixed point.
We emphasize that the converse is not true: the stabilizer of the center of mass of a facet is not a maximal bounded subgroup in general.
Indeed, the case of simply connected groups provides a counter-example by considering the center of mass of a facet that is not a vertex by Proposition \ref{prop:description:compact:maximal}.
\end{Rq}

\begin{Rq}\label{rq:compact:contained:in:maximal}
Using the proof of Proposition \ref{prop:description:compact:maximal}, it is not hard to see that a compact subgroup $H \subset G(k)$ is always contained in a maximal one.
Consider a fixed point $x \in X(G,k)$ by $H$ of maximal degree $d(x)$ (this does not depends on the choice of an apartment).
Hence, $H$ is contained in $\mathrm{Stab}_{G(k)}(x)$.

Claim: $\mathrm{Stab}_{G(k)}(x)$ is a maximal compact subgroup.

By contradiction, if $\mathrm{Stab}_{G(k)}(x)$ is not, then it fixes a second point $y$, and then one can find a fixed point on the line $(x,y)$ of higher degree: this contradicts the maximality of $d(x)$.
\end{Rq}

Now, we need further investigation on compact open subgroups to prove Noetherianity for absolutely simple semisimple groups.

\begin{Lem}\label{lem:compact:open:fixed:points}
Let $U$ be a compact subgroup of $G(k)$ and denote by $\Omega = X(G,k)^U$ the non-empty subset of points fixed by $U$.
If $U$ is open, then $\Omega$ is a bounded (therefore compact) subset of $X(G,k)$.
\end{Lem}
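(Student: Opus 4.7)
The plan is to argue by contradiction, deriving an impossibility from the openness of $U$ when $\Omega$ is assumed unbounded. First, non-emptiness of $\Omega$ is immediate: $U$ is compact, hence bounded by Lemma~\ref{lem:bounded:versus:compact}(2), so Corollary~\ref{cor:bounded:fixed:point} provides a fixed point $x_0 \in \Omega$. The set $\Omega$ is closed and convex, being the fixed-point set of a continuous isometric action on the complete $\mathrm{CAT}(0)$-space $X(G,k)$. The stabilizer $\mathrm{Stab}_{G(k)}(x_0)$ is compact by Lemma~\ref{lem:parahoric:compact}(3), and $U$ is open inside it, so the index $N := [\mathrm{Stab}_{G(k)}(x_0) : U]$ is finite.

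Suppose $\Omega$ is unbounded. Then there exist points $y_n \in \Omega$ with $d(x_0, y_n) \to \infty$; convexity gives $[x_0, y_n] \subset \Omega$, and each such segment lies in an apartment $A_n$ through $x_0$ by the standard building axiom. By transitivity of $\mathrm{Stab}_{G(k)}(x_0)$ on apartments through $x_0$, choose $g_n \in \mathrm{Stab}_{G(k)}(x_0)$ with $g_n \cdot A_n = \mathbb{A}$; the conjugate $U_n := g_n U g_n^{-1}$ is again an open compact subgroup of $\mathrm{Stab}_{G(k)}(x_0)$ of index $N$, and it pointwise fixes the segment $[x_0, g_n \cdot y_n] \subset \mathbb{A}$ of length $d(x_0, y_n) \to \infty$.

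The contradiction comes from bounding the diameter of $\mathbb{A}^{U_n}$ \emph{uniformly} in $n$. For each relative root $a \in \Phi$, openness of $U_n$ in $G(k)$ forces $U_n \cap U_a(k)$ to be a non-trivial open subgroup of $U_a(k) \cong (k,+)$, hence equal to $U_{a, r_a^{(n)}}$ for some finite $r_a^{(n)} \in \RR$ in the Bruhat--Tits filtration. The inclusion $U_n \cap U_a(k) \subset \mathrm{Stab}_{G(k)}(x_0) \cap U_a(k) = U_{a, r_a^0}$ has index at most $[\mathrm{Stab}_{G(k)}(x_0) : U_n] = N$, yielding an upper bound on $r_a^{(n)}$ depending only on $r_a^0$, $|\kappa|$ and $N$ but not on $n$. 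Since $U_{a, r_a^{(n)}}$ pointwise fixes the half-apartment $\{y \in \mathbb{A} : a(y) \geq -r_a^{(n)}\}$, we get $\mathbb{A}^{U_n} \subset \bigcap_{a \in \Phi}\{y \in \mathbb{A} : a(y) \geq -r_a^{(n)}\}$, which is a bounded polytope of uniformly controlled diameter because $\Phi$ contains $\pm a$ for each root and spans $V^*$ in the semisimple case. This contradicts $[x_0, g_n \cdot y_n] \subset \mathbb{A}^{U_n}$ having length tending to infinity; hence $\Omega$ is bounded, and being closed as well, it is compact by Lemma~\ref{lem:bounded:versus:compact}(2).

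The main technical point is maintaining uniformity in $n$ for the upper bound on the depths $r_a^{(n)}$: a priori different conjugates of $U$ could have wildly varying root-group filtration indices, but confining the conjugations to the fixed compact group $\mathrm{Stab}_{G(k)}(x_0)$, combined with multiplicativity of indices in a tower of open subgroups, restricts each $r_a^{(n)}$ to a bounded range determined by $(x_0, N, |\kappa|)$ alone.
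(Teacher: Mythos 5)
Your overall strategy --- pulling the ever-longer fixed segments back into the standard apartment by conjugating with elements of the compact group $K=\mathrm{Stab}_{G(k)}(x_0)$, then trapping each fixed-point set $\mathbb{A}^{U_n}$ inside an intersection of half-apartments of uniformly bounded depth --- is genuinely different from the paper's proof, which instead compactifies the building following R\'emy--Thuillier--Werner, extracts a boundary limit point $x$ of the unbounded sequence in $\Omega$, and contradicts $U_{a,x}=\{1\}$ for a suitable root against the containment $U\supset U_{a,l_a}$. However, the step you yourself flag as the main technical point is exactly where your argument breaks. You assert that $U_n\cap U_a(k)$ \emph{equals} a filtration subgroup $U_{a,r_a^{(n)}}$, and that its index in $U_{a,r_a^0}$ then bounds $r_a^{(n)}$. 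Neither claim holds. First, $U_a(k)$ is not isomorphic to $(k,+)$ for a general semisimple $G$: relative root groups can be higher-dimensional and even non-commutative for multipliable roots. Second, and more seriously, even when $U_a(k)\simeq(k,+)$ a compact open subgroup need not be a ball of the filtration: in $k=\mathbb{F}_p((t))$ the subgroup $\left\{x\in\mathcal{O}_k : \text{the coefficient of } t^m \text{ in } x \text{ vanishes}\right\}$ is open of index $p$ in $\mathcal{O}_k$ yet contains no $t^{m'}\mathcal{O}_k$ with $m'\leq m$; letting $m$ grow shows that a bound on the index $[U_{a,r_a^0}:U_n\cap U_a(k)]\leq N$ gives \emph{no} bound on the depth of the largest filtration subgroup contained in $U_n\cap U_a(k)$. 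The claimed uniform control of $r_a^{(n)}$ in terms of $(x_0,N,|\kappa|)$ therefore evaporates, and with it the uniformly bounded polytope.

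The gap is repairable without changing your architecture. Since $U$ is open in the compact group $K$, its normalizer $\mathcal{N}_K(U)\supseteq U$ is open of finite index in $K$, so $U$ has only finitely many $K$-conjugates; hence $\{U_n\}_n$ is a finite set of groups and you may take the maximum of the finitely many depths $r_a$ (one per conjugate and per root, each existing because the $U_{a,r}$ form a neighbourhood basis of the identity in $U_a(k)$ and $U_n\cap U_a(k)$ is open). Equivalently, replace $U$ by the open normal subgroup $\bigcap_{g\in K}gUg^{-1}$ of $K$, which lies in every $U_n$, and run the half-apartment estimate once for that single group. With that correction your proof goes through and is arguably more elementary than the paper's, since it avoids the compactification of the building entirely; what the paper's route buys is that it never needs any uniformity at all, working with the single group $U$ and a single boundary point.
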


\begin{proof}
By contradiction, assume that $\Omega$ is not bounded.
Let $x_0 \in \Omega$.
Since $\Omega$ is not bounded, one can choose a sequence $x_n \in \Omega$ such that $d(x_n,x_0) \geq n$.
Let $\overline{X(G,k)}$ be a compactification of $X(G,k)$, defined in \cite{RemyThuillierWerner}.
Let $x \in \overline{X(G,k)}$ be a limit point of $(x_n)_n$ (it exists because $\overline{X(G,k)}$ is a compact space by \cite[3.34]{RemyThuillierWerner}).
Because $\begin{array}{ccc}X(G,k)&\rightarrow&\RR\\y&\mapsto&d(x_0,y)\end{array}$ is continuous, $x \not\in X(G,k)$.
By \cite[4.20 (i)]{RemyThuillierWerner}, there exists a maximal $k$-split torus $S'$ such that $x_0,x \in \overline{\mathbb{A}(S',k)}$, and one can assume that $S' = S$.

The group $U$ is open in the subgroup $\mathrm{Stab}_{G(k)}(x_0)$, which is compact by \ref{lem:parahoric:compact}.
Hence, for every relative root $a \in \Phi$, the intersection $U_{a}(k) \cap U$ has finite index in the subgroup $U_{a,{x_0}}$.
Hence, $U$ contains $U_{a,l_a}$ for some $l_a \in [f_{x_0}(a),+ \infty[$.

Because $G(k)$ acts continuously on $\overline{X(G,k)}$, the point $x \in \overline{X(G,k)}$ is fixed by $U$.
With the notations of \cite[§4.1]{RemyThuillierWerner}, since $x \not\in \mathbb{A}(S,k)$, it belongs to a stratum $\mathcal{B}(Q_{\text{ss}},k)$ for some proper $k$-parabolic subgroup $Q$.
By \cite[4.12(ii)]{RemyThuillierWerner}, we know that $\operatorname{Stab} t_{G}(x)(k)$ is Zariski-dense in $Q$.
Since $Q$ is a proper $k$-parabolic subgroup, there exists a root $a \in \Phi(G,S)$ such that $U_a \cap Q = 1$.
Hence $U_{a,x} = {\mathrm{Stab}_{G(k)}(x) \cap U_{a}(k)} = \{1\}$.
But $U_{a,x} \supset {U \cap U_{a}(k)} \supset U_{a,l_a}$, and we get a contradiction.\qed
\end{proof}

Let us give another proof using the visual boundary of the building instead of a compactification.

\begin{proof}
If $X(G,k)$ is a single point, the lemma is obvious. Assume that $X(G,k)$ is not a single point.
For any minimal $k$-parabolic subgroup $P$ of $G$, we know that its unipotent radical $\mathcal{R}_u(P)$ is defined over $k$ \cite[20.5]{Borel} and directly spanned by some root groups \cite[21.11]{Borel}.
By \cite[13.3]{Landvogt}, we know that any element of a root group fixes an half apartment, therefore its action on the building preserves the type of facets.
Let $G(k)^+$ be the subgroup of $G(k)$ generated by the rational points subgroups $\mathcal{R}_u(P)(k)$ of the unipotent radical of minimal parabolic subgroups of $G$.
Then the action of $G(k)^+$ on the Bruhat-Tits building $X(G,k)$ preserves the types of facets.

Let $S$ be a maximal $k$-split torus and let $A_S = A(G,S,k)$ be the apartment defined from $S$.
Because $G(k)$ acts strongly transitively on $X(G,k)$ and $N = \mathcal{N}_G(S)(k)$ is the stabilizer of $A_S$ \cite[13.8]{Landvogt}, we know that $N$ acts transitively on the set of alcoves of $A_S$.
Because $N$ acts on $A_S$ through a group homomorphism $N \to V \rtimes W \subset \mathrm{Aff}(A_S)$ \cite[1.8]{Landvogt}, one can consider the subgroup $N'$ of $N$ consisting in elements that preserve the type of faces.
One can show that $N'$ acts transitively on the set of alcoves of $A_S$ because any element of the spherical Weyl group (corresponding to the Weyl group of a special vertex) can be lift in $N$ \cite[12.1(ii)]{Landvogt}.
Consider the subgroup $H$ of $G(k)$ generated by all those $N'$ and $G(k)^+$.
Then $H$ acts on $X(G,k)$ by type-preserving isometries.
We claim that $H$ acts strongly transitively on $X(G,k)$.
Indeed, on the one hand, $H$ acts transitively on the set of alcoves contained in a given apartment and, on the other hand, by \cite[5.1.31]{BruhatTits2}, we know that $H$ acts transitively on the set of apartments containing a given alcove.

Denote by $\partial X(G,k)$ the visual boundary of $X(G,k)$ (as defined in \cite[16.9]{Garrett} or \cite[11.9]{AbramenkoBrown}).
Since $H$ acts strongly transitively on $X(G,k)$ by type-preserving isometries, we know by \cite[17.1]{Garrett} that $H$ acts strongly transitively by type-preserving isomorphisms on $\partial X(G,k)$.
Moreover, by \cite[17.2]{Garrett}, we know that $\partial X(G,k)$ is a spherical building whose Weyl group $W$ is that of a special vertex of $X(G,k)$.
In particular, there is a spherical Tits system $(H,B,N,S)$ of type $W$ where $B$ denotes the stabilizer in $H$ of a chamber at infinity.
Let $D \subset X(G,k)$ be a sector so that the face $[D]$ of $D$ at infinity is the chamber fixed by $B$.
Let $S$ be a maximal $k$-split torus of $G$ such that $D \subset A_S$.
Then, there is a choice of positive roots $\Phi^+ \subset \Phi(G,S)$ such that $\forall a \in \Phi^+$, we have $U_{a,D} \neq \{1\}$ and $U_{-a,D} = 1$.
Moreover, for any $u \in U_a(k)$, there is a subsector of $D$ fixed by $u$.
Hence $U_a(k)$ fixes the chamber at infinity $[D]$.
By \cite[21.11]{Borel}, we know that the group $U^+$ generated by the $U_a(k)$ is the group of rational points of the unipotent radical of a minimal $k$-parabolic subgroup of $G$.
By construction, we have that $B \supset U^+$.

Let $\xi \in \partial X(G,k)$ be any point of the visual boundary of the building and denote by $P_\xi$ the stabilizer in $H$ of $\xi$.
By \cite[5.6]{Garrett}, we know that $P_\xi$ is a strict parabolic subgroup of the Tits system $(H,B,N,R_1)$ and, in particular, $P_\xi$ contains a conjugate of $B$, therefore of $U^+$.
We also know by \cite[21.15]{Borel} that $G(k)$ admits a Tits system $\left( G(k), P(k), \mathcal{N}_G(S)(k), R_2 \right)$ where $S$ is a maximal $k$-split torus and $P$ is a minimal $k$-parabolic subgroup with Levi factor $\mathcal{Z}_G(S)$.
Up to conjugacy \cite[20.9]{Borel}, we can assume that $U^+$ is the group of rational points of the unipotent radical of $P$.
By \cite[IV.2.7 Remark 1 of Theorem 5]{Bourbaki4-6} applied to $G' = G(k)^+ = G_1$, $H=1$ and $B = P(k)$, we obtain the existence of Tits system $\left(G(k)^+,U^+,N',R_2'\right)$ so that $P_\xi \cap G(k)^+$ is a parabolic subgroup of this Tits system since it contains a conjugate of $U^+$.
Moreover, $P_\xi \cap G(k)^+ \neq G(k)^+$ since $G(k)^+$ does not fix any point of $X(G,k)$.
As a consequence, $P_\xi$ is contained in the rational points of a strict $k$-parabolic subgroup of $G$.
In particular, up to conjugacy, there is a root $a \in \Phi$ such that $U_a(k) \cap P_\xi = 1$ that is not open in $U_a(k) \subset H$.
Hence $P_\xi$ is not an open subgroup of $H$.

By contradiction, assume that $\Omega$ is not bounded.
Then $\Omega$ contains a geodesic ray consisting on points fixed by $U$.
By definition of $\partial X(G,k)$, this ray defines a point $\xi \in \partial X(G,k)$.
Such a point $\xi$ is fixed by $U \cap H$ since $U$ fixes the geodesic ray.
Hence, the stabilizer $P_\xi$ of $\xi$ in $H$ contains $U \cap H$, thus it is open in $H$.
This is a contradiction.
\qed
\end{proof}

\begin{Prop}\label{prop:compact:open:finitely:contained}
Every compact open subgroup of $G(k)$ is contained in finitely many compact (open) subgroups of $G(k)$.
\end{Prop}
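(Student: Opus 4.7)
The plan is to parametrise compact overgroups of $U$ by facets of $X(G,k)$ that meet the fixed-point locus of $U$, and then to use finite-index arguments inside compact stabilisers of facets.

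First I would apply Lemma \ref{lem:compact:open:fixed:points} to the compact open subgroup $U$: the fixed-point set $\Omega = X(G,k)^{U}$ is a non-empty bounded, hence relatively compact, subset of the building. Because the Bruhat--Tits building $X(G,k)$ is a locally finite polysimplicial complex, the bounded subset $\Omega$ meets only finitely many facets; call them $F_{1},\dots,F_{r}$.

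Next, given any compact subgroup $H \supset U$ of $G(k)$, I would use the Bruhat--Tits fixed point theorem (Theorem \ref{thm:fixed:point:theorem}), which applies thanks to Lemma \ref{lem:bounded:versus:compact}(1), to produce a point $y \in X(G,k)$ fixed by $H$. Since $U \subset H$, this point satisfies $y \in \Omega$, so $y$ lies in one of the facets $F_{i}$. The $G(k)$-action preserves the polysimplicial structure, so any element fixing the point $y \in F_{i}$ must preserve the facet $F_{i}$ setwise. Consequently $H$ is contained in the setwise stabiliser $\widehat{P}_{i} := \mathrm{Stab}_{G(k)}(F_{i})$. As $F_{i}$ is bounded, Lemma \ref{lem:parahoric:compact}(3) shows that each $\widehat{P}_{i}$ is a compact subgroup of $G(k)$.

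Finally, $U$ is open in $G(k)$, hence open in the compact group $\widehat{P}_{i}$, so $[\widehat{P}_{i}:U] < \infty$. Therefore there are only finitely many subgroups sandwiched between $U$ and $\widehat{P}_{i}$, and summing over the finitely many indices $i = 1,\dots,r$ gives the desired finiteness. Every such intermediate $H$ is automatically open because it contains the open subgroup $U$, which takes care of the parenthetical ``(open)'' in the statement.

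The only subtle step is the passage from ``$H$ fixes the point $y$'' to ``$H$ stabilises the whole facet $F_{i}$''; I would justify it by recalling that $G(k)$ acts on $X(G,k)$ by simplicial (polysimplicial) isometries, so facets are permuted as blocks and the facet containing a fixed point is necessarily preserved. Everything else is a bookkeeping consequence of local finiteness of the building and of the compactness/openness properties already established in Lemmas \ref{lem:parahoric:compact}, \ref{lem:bounded:versus:compact} and \ref{lem:compact:open:fixed:points}.
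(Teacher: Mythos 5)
Your proof is correct and rests on the same pillars as the paper's: Lemma \ref{lem:compact:open:fixed:points} to bound the fixed-point set, local finiteness of $X(G,k)$ to extract finitely many candidate facets, and openness of $U$ to get finite index in a compact overgroup. The only (cosmetic) difference is that you dominate every compact overgroup by the setwise stabilizer of one of the finitely many facets met by $\Omega$, whereas the paper routes the argument through Remarks \ref{rq:isolated:maximal:bounded} and \ref{rq:compact:contained:in:maximal} to first show $U$ lies in only finitely many \emph{maximal} compact subgroups; your version slightly shortens the dependence chain but is otherwise the same argument.
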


\begin{proof}
Consider a compact open subgroup $U \subset G(k)$.
By Lemma \ref{lem:compact:open:fixed:points}, the set $\Omega = X(G)^U$ is non-empty and bounded.
By Remark \ref{rq:compact:contained:in:maximal}, $U$ is contained in a maximal compact subgroup.
Since $X(G,k)$ is locally finite (because $k$ is a local field), by Remark \ref{rq:isolated:maximal:bounded} $U$ is contained in finitely many maximal compact subgroups.
Since $U$ is open, it has finite index in any maximal compact subgroup.
Hence, the set of compact subgroups containing $U$ is finite.\qed
\end{proof}

We now obtain the first step of the main theorem \ref{thm:equivalence:quasi:reductive} by the following:

\begin{Prop}\label{prop:noetherian:semisimple}
Let $k$ be a non-Archimedean local field and $G$ be an almost $k$-simple, semisimple group.
Then $G(k)$ is Noetherian.
\end{Prop}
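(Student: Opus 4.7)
First, I handle the $k$-anisotropic case: then $G(k)$ is compact by \cite[5.1.27]{BruhatTits2}, so Noetherian by Proposition~\ref{prop:noetherian:groups}(2). From now on I assume $G$ is $k$-isotropic, and exploit the affine Tits system $(G(k)^+, B^+, N^+, S)$ on the positive part, with $S$ finite. Almost $k$-simplicity of $G$ implies that the associated affine Coxeter system $(W^+, S)$ is irreducible, so every proper standard parabolic subgroup $W_X$ (with $X \subsetneq S$) of $W^+$ is finite.

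My main plan is to first establish the following dichotomy: every open subgroup of $G(k)$ is either compact or of finite index in $G(k)$. Granted this dichotomy, I handle an increasing sequence $(U_n)_n$ of open subgroups as follows. Pick a compact open subgroup $V \subseteq U_0$; then $V \subseteq U_n$ for all $n$. If every $U_n$ is compact, they all belong to the finite family of compact subgroups of $G(k)$ containing $V$ given by Proposition~\ref{prop:compact:open:finitely:contained}, so the sequence stabilizes. Otherwise some $U_N$ is non-compact, hence of finite index $m := [G(k):U_N] < \infty$ by the dichotomy; each subsequent $U_n$ (for $n \geq N$) is then a subgroup of $G(k)$ containing $U_N$, which is a union of at most $m$ left cosets of $U_N$, so there are at most $2^m$ such subgroups and the sequence again stabilizes.

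To prove the dichotomy, I consider an open non-compact subgroup $U$. A compact open subgroup $V \subseteq U$ exists and, by Corollary~\ref{cor:bounded:fixed:point}, fixes a point of $X(G,k)$, which lies in the closure of some alcove $\mathbf{c}$; set $B^+ := \mathrm{Stab}_{G(k)^+}(\mathbf{c})$. The group $\widetilde U := \langle U, B^+\rangle$ is an open subgroup of $G(k)$ containing $B^+$. By Bourbaki's theorem on subgroups containing $B^+$ in a Tits system, applied to the affine BN-pair of $G(k)^+$, the intersection $\widetilde U \cap G(k)^+$ equals a standard parahoric $B^+ W_X B^+$ for some $X \subseteq S$. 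For $X \subsetneq S$, the finite $W_X$ makes $B^+ W_X B^+$ a finite union of compact double cosets, hence compact; but $\widetilde U \cap G(k)^+$ has index at most $[G(k):G(k)^+] < \infty$ in $\widetilde U$, and $\widetilde U$ is non-compact since it contains $U$, so $\widetilde U \cap G(k)^+$ is non-compact too. This forces $X = S$, so $\widetilde U \supseteq G(k)^+$, and in particular $[G(k):\widetilde U] < \infty$.

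The main obstacle is then to show $[\widetilde U : U] < \infty$, which together with the above yields $[G(k):U] < \infty$ and completes the dichotomy. I would attack this by proving $\widetilde U = U \cdot B^+$ as a set, which immediately gives $[\widetilde U : U] \leq [B^+ : B^+ \cap U] < \infty$ (the latter being finite because $U$ is open and $B^+$ is compact). Proving $\widetilde U = U B^+$ amounts to rewriting an arbitrary alternating product of elements from $U$ and $B^+$ in the form $ub$, which I would tackle using the Iwahori factorization in $G(k)^+$ combined with the finite index of $B^+ \cap U$ in $B^+$ to control, via only finitely many $(B^+ \cap U)$-cosets, the conjugations of $B^+$-elements by $U$-elements; this bookkeeping is where I expect the technical difficulty to concentrate.
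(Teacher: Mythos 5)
Your global architecture is the same as the paper's: reduce Noetherianity to the dichotomy \emph{every open subgroup is either compact or of finite index}, then treat an increasing sequence of open subgroups by splitting into the all-compact case (handled by Proposition~\ref{prop:compact:open:finitely:contained}) and the case where some term is non-compact (handled by finiteness of the index). That part is fine and matches the paper. The problem is your proof of the dichotomy, and it sits exactly where you flag it. Passing to $\widetilde U = \langle U, B^+\rangle$ and invoking Bourbaki's classification of subgroups containing $B^+$ correctly shows $\widetilde U \supseteq G(k)^+$, but this says nothing about $U$ itself, because $U$ need not contain $B^+$ (it only contains a finite-index subgroup of $B^+$). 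Your proposed remedy, the identity $\widetilde U = U\cdot B^+$, is not established and is not plausible as stated: $U B^+$ has no reason to be a group, i.e.\ there is no reason that $b u \in U B^+$ for $b \in B^+$, $u \in U$, and the ``bookkeeping via finitely many $(B^+\cap U)$-cosets'' is precisely the hard content you would need to supply. In effect you are trying to reprove, from BN-pair combinatorics alone, the statement that a non-bounded open subgroup of $G(k)^+$ is all of $G(k)^+$; that statement is a genuine theorem (of Tits), not a formal consequence of the Tits-system axioms.

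The paper closes this gap by citing exactly that theorem: by \cite[Theorem (T)]{Prasad-TitsTheorem}, any non-bounded open subgroup of $G(k)^+$ equals $G(k)^+$. Concretely, the paper first shows that if $U\cap G(k)^+$ is bounded then $U$ is compact (using that $G(k)/G(k)^+$ is compact, by \cite[6.14]{BorelTits-HomoAbstraits}), and then, for $U$ non-compact, deduces that $U\cap G(k)^+$ is a non-bounded open subgroup of $G(k)^+$, hence equals $G(k)^+$ by Theorem~(T); finite index of $U$ in $G(k)$ then again follows from compactness of $G(k)/G(k)^+$. So your argument becomes correct if you replace the $\widetilde U = UB^+$ step by an appeal to \cite{Prasad-TitsTheorem} (or supply an independent proof of that theorem, which is substantially more work than the ``bookkeeping'' you describe). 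Two smaller remarks: your separate treatment of the anisotropic case is harmless but unnecessary once Theorem~(T) is used; and when you conclude in the non-compact case you should say that the $U_n$ for $n \geq N$ all contain $G(k)^+$ and correspond to subgroups of the finite group $G(k)/U_N$ --- your count of ``at most $2^m$ unions of cosets'' works, but only after you know each such union that is a subgroup contains $U_N$, which you do have.
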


\begin{proof}
Let $(U_n)_{n \in \NN}$ an increasing sequence of open subgroups of $G(k)$.
Denote by $G(k)^+$ the normal subgroup of $G(k)$ generated by rational points of the unipotent radical of minimal parabolic $k$-subgroups of $G$ \cite[6.2]{BorelTits-HomoAbstraits}.

Let us start with statements on open subgroups of $G(k)$.
Consider an open subgroup $U$ of $G(k)$.

\textbf{Claim:} If $U \cap G(k)^+$ is bounded, then $U$ is compact.

Indeed, assume $U \cap G(k)^+$ is bounded.
Since $U$ is open, it is closed.
The group $G(k)^+$ is closed according to \cite[6.14]{BorelTits-HomoAbstraits}.
Thus, $U \cap G(k)^+$ is compact by \ref{lem:bounded:versus:compact}(2).
By \cite[6.14]{BorelTits-HomoAbstraits}, the quotient group $G(k) / G(k)^+$ is compact.
Hence $U G(k)^+ / G(k)^+$ is a compact open subgroup of $G(k) / G(k)^+$.
The natural bijective continuous homomorphism $U /(U \cap G(k)^+ ) \rightarrow U G(k)^+ /G(k)^+$ is open and hence a homeomorphism, so $U /(U \cap G(k)^+ )$
is compact.
It follows that $U$ is compact.

\textbf{Claim:} If $U$ is not bounded, then $U$ contains $G(k)^+$

Indeed, if $U$ is not bounded, then it is not compact.
Hence $U \cap G(k)^+$ is a non-bounded open subgroup of $G(k)^+$ by the previous claim.
By a theorem of Prasad, attributed to Tits \cite[Theorem (T)]{Prasad-TitsTheorem}, we get $U \cap G(k)^+ = G(k)^+$.
Hence $U \supset G(k)^+$.

Let us now finish the proof by distinguishing two cases.

\textbf{First case:} $U_n$ is bounded (hence compact) for all $n \in \NN$.

By Proposition \ref{prop:compact:open:finitely:contained}, $U_0$ is contained in finitely many compact subgroups.
Hence, the increasing sequence of compact open subgroups $(U_n)_{n \in \NN}$ is eventually constant.

\textbf{Second case:} $U_N$ is not bounded for some $N \in \NN$.

Hence, for all $n \geq N$, the group $U_n$ is not bounded and contains $G(k)^+$.
The open subgroup $U_N / G(k)^+$ of the compact group $G(k) / G(k)^+$ has finite index.
Hence, the sequence $(U_n)_{n \in \NN}$ is eventually constant.\qed
\end{proof}

\subsection{Quasi-reductive groups}

\subsubsection*{The case of a commutative quasi-reductive group}

\begin{Prop}\label{prop:noetherian:commutative}
Let $k$ be a non-Archimedean local field.
If $C$ is a smooth connected commutative quasi-reductive $k$-group,
then $C(k)$ is Noetherian.
\end{Prop}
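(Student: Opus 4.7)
My plan is a two-step d\'evissage: first factor out $\mathcal{R}_{u,k}(C)$, then, in the resulting pseudo-reductive case, factor out the maximal $k$-torus. The two building blocks, $k$-tori and $k$-wound unipotent groups, both have Noetherian groups of rational points, for different reasons: a $k$-wound unipotent group has compact rational points by Oesterl\'e's theorem (and compact groups are Noetherian by Proposition~\ref{prop:noetherian:groups}(2)), while a $k$-torus has a compact open subgroup with finitely generated abelian quotient, hence is Noetherian by parts~(2) and~(4) of Proposition~\ref{prop:noetherian:groups} together with Example~\ref{ex:noetherian:group}(1).

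First I would consider $1 \to U \to C \to C/U \to 1$ with $U := \mathcal{R}_{u,k}(C)$. Quasi-reductivity of $C$ gives that $U$ is $k$-wound, so $U(k)$ is compact, hence Noetherian. Lemma~\ref{lem:exact:sequence:cohomology}(b), applied to the smooth group $U$, yields a strict exact sequence $1 \to U(k) \to C(k) \to (C/U)(k)$ whose right arrow is open. Parts~(1) and~(3) of Proposition~\ref{prop:noetherian:groups} then reduce the problem to showing $(C/U)(k)$ is Noetherian, so I may replace $C$ by its commutative pseudo-reductive quotient.

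Assuming now $C$ is commutative pseudo-reductive, let $T \subset C$ be the maximal $k$-torus and set $V := C/T$. Base-changing to $\overline{k}$ shows $V$ is a smooth connected commutative unipotent $k$-group. The crucial step is to check that $V$ is $k$-wound: otherwise $V$ would contain $\mathbb{G}_{a}$ as a $k$-subgroup, and since $\mathrm{Ext}^{1}(\mathbb{G}_{a}, T) = 0$ (all $T$-torsors on $\mathbb{A}^{1}$ being trivial, $T$ being a torus), the preimage of this $\mathbb{G}_{a}$ in $C$ would split as $T \times \mathbb{G}_{a}$, exhibiting $\mathbb{G}_{a}$ as a nontrivial $k$-split connected unipotent normal $k$-subgroup of $C$ and contradicting $\mathcal{R}_{u,k}(C) = 0$. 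Granted this, $V(k)$ is compact by Oesterl\'e. A further application of Lemma~\ref{lem:exact:sequence:cohomology}(b) and Proposition~\ref{prop:noetherian:groups}(1) and~(3) to the sequence $1 \to T \to C \to V \to 1$ (with both ends Noetherian) then yields that $C(k)$ is Noetherian.

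The hard part will be the claim that $V = C/T$ is $k$-wound in the commutative pseudo-reductive case; everything else is straightforward assembly of the preceding propositions together with Oesterl\'e's theorem and the standard description of the rational points of a torus over a local field as an extension of $\mathbb{Z}^{r}$ by a compact group.
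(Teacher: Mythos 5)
Your proof is correct, but it takes a genuinely different route from the paper's. The paper argues in a single step: it quotients $C$ by its maximal $k$-\emph{split} torus $S$, checks that $C/S$ contains no copy of $\mathbb{G}_a$ (by quasi-reductivity and the splitting result of [SGA3, Exp.\ XVII, 6.1.1]) nor of $\mathbb{G}_m$ (by maximality of $S$), and then invokes Conrad's compactness theorem [Con12, A.5.7] to conclude that $(C/S)(k)$ is compact; Noetherianity follows from Proposition~\ref{prop:noetherian:groups} since $S(k)\simeq (k^\times)^n$. Your two-step d\'evissage --- first by the $k$-wound radical $\mathcal{R}_{u,k}(C)$, then by the full maximal $k$-torus $T$ --- replaces the appeal to Conrad's general compactness result by two applications of Oesterl\'e's theorem plus the standard structure of $T(k)$ as a compact-open-by-$\mathbb{Z}^r$ group (note that Example~\ref{ex:noetherian:group}(1) only covers $\ZZ$, but $\ZZ^r$ is handled by iterating Proposition~\ref{prop:noetherian:groups}(4)). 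The price is your ``hard part'', that $C/T$ is $k$-wound for $C$ commutative pseudo-reductive; your strategy for it is sound and is in fact the same mechanism the paper uses for its own claim, but one caveat on the justification of $\mathrm{Ext}^1(\mathbb{G}_a,T)=0$: it is \emph{not} true that all $T$-torsors over $\mathbb{A}^1_k$ are trivial for a non-split torus $T$ (by homotopy invariance $H^1_{\mathrm{et}}(\mathbb{A}^1_k,T)\simeq H^1(k,T)$, which is often nonzero); what saves you is that the torsor underlying the extension is split over the origin, hence trivial, and one must then also observe that every symmetric Hochschild $2$-cocycle $\mathbb{A}^2\to T$ is constant. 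The cleanest fix is to cite [SGA3, Exp.\ XVII, 6.1.1] directly for the splitting of the extension $1\to T\to W\to\mathbb{G}_a\to 1$, exactly as the paper does in the split-torus case. With that reference supplied, your argument is complete and arguably more self-contained than the paper's, at the cost of being longer.
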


\begin{proof}
This proof follows the beginning of the proof of \cite[4.1.5]{Conrad-cosetfinite}.

Let $S$ be the maximal $k$-split torus of $C$ (it is unique by $k$-rational conjugacy \cite[C.2.3]{CGP}).
Consider the smooth quotient of algebraic $k$-groups:
\begin{equation*}
1 \longrightarrow S \stackrel{j}{\longrightarrow}
C \stackrel{\pi}{\longrightarrow} C /S \longrightarrow 1
\end{equation*}

\textbf{Claim:} The connected smooth abelian $k$-group $C/S$ does not contains any subgroup isomorphic to $\mathbb{G}_a$ or $\mathbb{G}_m$.

Applying \cite[Exp. XVII 6.1.1(A)(ii)]{SGA3} to the preimage in $C$ of a subgroup isomorphic to $\mathbb{G}_a$ (see \cite[4.1.4]{Conrad-cosetfinite} for a more direct proof), we get a contradiction with quasi-reductiveness of $C$.
Applying \cite[8.14 Cor.]{Borel} to the preimage in $C$ of a subgroup isomorphic to $\mathbb{G}_m$, we get a contradiction with maximality of $S$.

By Lemma \ref{lem:exact:sequence:cohomology}(b) and Hilbert 90 theorem, we get a short exact sequence of topological groups:
\begin{equation*}
1 \longrightarrow S(k) \stackrel{j_k}{\longrightarrow}
C(k) \stackrel{\pi_k}{\longrightarrow} \big(C/S\big)(k) \longrightarrow 1
\end{equation*}
where $\pi_k$ is a surjective open morphism.

By \cite[A.5.7]{Conrad-cosetfinite}, the topological group $\big(C/S\big)(k)$ is compact, hence it is Noetherian by Proposition \ref{prop:noetherian:groups}(2) (In this commutative case, we also have a direct proof considering the smooth quotient of $C/S$ by its maximal $k$-torus, which is anisotropic).
By Proposition \ref{prop:noetherian:groups}(4) and (5), the topological group $S(k) \simeq (k^\times)^n$ (where $n = \mathrm{dim} S$) is Noetherian.
Applying Proposition \ref{prop:noetherian:groups}(3) to $\pi_k$, the topological group $C(k)$ is Noetherian.\qed
\end{proof}

\subsubsection*{The case of a pseudo-reductive group}

Thanks to \cite{CGP}, we have structure theorems on pseudo-reductive groups, well summarized in \cite[§2]{Conrad-cosetfinite}.
In particular, there is a lot of flexibility in the choice of a (generalised) standard presentation, so that we can reduce the question of Noetherianity from pseudo-reductive groups to semisimple groups and commutative quasi-reductive groups.

\begin{Lem}\label{lem:noetherian:weil:restriction}
Let $k$ be a non-Archimedean local field and $k'$ a nonzero finite reduced $k$-algebra, and write $k' = \prod_{i \in I} k'_i$ where $k'_i/k$ are extensions of local fields of finite degree (but possibly non-separable).
Let $G'$ be a smooth connected $k'$-group and denote by $G'_i$ its fiber over the factor field $k'_i$.
Consider the smooth connected $k$-group $G = R_{k'/k}(G')$.
If each fiber $G'_i$ is either an absolutely simple semisimple $k'_i$-group or a basic exotic pseudo-reductive $k'_i$-group, then the topological group $G(k)$ is Noetherian.
\end{Lem}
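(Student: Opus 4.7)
The plan is a two-step reduction: first, to the individual factor fields $k'_i$ via the product structure on $k'$; second, within each factor, to Proposition~\ref{prop:noetherian:semisimple} for the semisimple fibres and to the CGP structure theory combined with Oesterlé's compactness criterion for the basic exotic fibres, all assembled by means of Proposition~\ref{prop:noetherian:groups}.

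First, the universal property of Weil restriction together with the ring decomposition $k' = \prod_{i \in I} k'_i$ yields a natural homeomorphism of topological groups
\begin{equation*}
G(k) \;=\; R_{k'/k}(G')(k) \;\simeq\; \prod_{i \in I} G'_i(k'_i),
\end{equation*}
where each $k'_i$ is itself a non-Archimedean local field as a finite extension of $k$. Since $I$ is finite, iterated application of Proposition~\ref{prop:noetherian:groups}(4) reduces the statement to proving that each factor $G'_i(k'_i)$ is Noetherian.

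If $G'_i$ is absolutely simple semisimple, Proposition~\ref{prop:noetherian:semisimple} applies directly. If $G'_i$ is basic exotic pseudo-reductive over $k'_i$, I would use the construction recalled in \cite[Ch.~7]{CGP}: there exist a finite purely inseparable extension $K/k'_i$, a simply connected absolutely simple semisimple $K$-group $H$, and a very special isogeny $\pi : H \to \overline{H}$, such that $G'_i$ is defined as a fibre product of $R_{K/k'_i}(\pi)$ along $\overline{H} \hookrightarrow R_{K/k'_i}(\overline{H})$. Passing to $k'_i$-rational points (with Lemma~\ref{lem:exact:sequence:cohomology}(b) providing smoothness and openness of the relevant quotient maps) and using that $\ker \pi$ is infinitesimal, one extracts a short exact sequence of topological groups
\begin{equation*}
1 \to C \to G'_i(k'_i) \to Q \to 1,
\end{equation*}
in which $Q$ is an open subgroup of finite index in $\overline{H}(k'_i)$ and $C$ is the group of $k'_i$-rational points of a $k'_i$-wound smooth connected unipotent $k'_i$-group coming from $R_{K/k'_i}(\ker \pi)$. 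By Oesterlé's criterion recalled before Definition~\ref{def:quasi:reductive}, $C$ is compact and therefore Noetherian by Proposition~\ref{prop:noetherian:groups}(2); $Q$ is Noetherian by Proposition~\ref{prop:noetherian:semisimple} combined with Proposition~\ref{prop:noetherian:groups}(1); and Proposition~\ref{prop:noetherian:groups}(4) then delivers Noetherianity of $G'_i(k'_i)$.

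The main obstacle is the exotic case, where one has to carefully extract the displayed short exact sequence from the fibre-product description of a basic exotic pseudo-reductive group: one must check that $R_{K/k'_i}(\ker \pi)$ yields a $k'_i$-wound (not merely arbitrary) unipotent group at the level of $k'_i$-points, and that the image of $G'_i(k'_i)$ in $\overline{H}(k'_i)$ really has finite index. These verifications should flow from the very-special-isogeny formalism of \cite[Ch.~7--8]{CGP}, the pseudo-reductivity of $G'_i$, and the smoothness/openness statements of Lemma~\ref{lem:exact:sequence:cohomology}, but they are the delicate step that separates this result from a purely formal consequence of Proposition~\ref{prop:noetherian:semisimple}.
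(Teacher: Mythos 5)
Your reduction to the individual factors via $R_{k'/k}(G')(k)\simeq\prod_{i\in I}G'_i(k'_i)$ and Proposition~\ref{prop:noetherian:groups}(4), and your treatment of the absolutely simple semisimple fibres via Proposition~\ref{prop:noetherian:semisimple}, coincide with the paper's argument. The problem is the basic exotic case, where the d\'evissage you propose rests on a misidentification of the kernel. In the fibre-product description of $G'_i$ inside $R_{K/k'_i}(H)$, the kernel of the projection $G'_i\to\overline{H}$ is, by left-exactness of Weil restriction, contained in $R_{K/k'_i}(\ker\pi)$, and $\ker\pi$ is an \emph{infinitesimal} group scheme: the very special isogeny is purely inseparable of height one. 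Hence $R_{K/k'_i}(\ker\pi)$ is a non-smooth group scheme whose group of rational points is $(\ker\pi)(K)=\{1\}$. There is no smooth connected $k'_i$-wound unipotent kernel here, so Oesterl\'e's criterion and Proposition~\ref{prop:noetherian:groups}(2) have nothing to apply to, and Lemma~\ref{lem:exact:sequence:cohomology}(b) does not give openness of the induced map on rational points, since its openness statement requires the kernel to be smooth. Your short exact sequence therefore degenerates to an injection $G'_i(k'_i)\hookrightarrow\overline{H}(k'_i)$, and the entire content of the exotic case becomes the step you defer, namely that the image is open of finite index, for which you offer no mechanism.

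That missing step is exactly what the paper invokes: by \cite[7.3.3, 7.3.5]{CGP}, the map $G'_i(k'_i)\to\overline{H}(k'_i)$ is bijective and in fact a homeomorphism of topological groups onto the rational points of an absolutely simple semisimple group, so Proposition~\ref{prop:noetherian:semisimple} applies directly, with no exact-sequence bookkeeping. The correct repair of your argument is thus not to manufacture a wound unipotent kernel, but to quote (or reprove) the CGP result that the rational points of a basic exotic pseudo-reductive group over a local field are topologically isomorphic to those of its very special semisimple quotient.
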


\begin{proof}
Write $R_{k'/k}(G') = \prod_{i \in I} R_{k'_i/k}(G'_i)$ \cite[A.5.1]{CGP}.
There is a topological isomorphism $R_{k'/k}(G')(k) \simeq \prod_{i\in I}G'_i(k'_i)$.
If each factor $G'_i(k'_i)$ is Noetherian,
then so is $G(k)$ by Proposition \ref{prop:noetherian:groups}(4).

From now on, assume that $k'/k$ is a finite extension of local fields.
It is sufficient to show that $G'(k')$ is Noetherian.

If $G'$ is an absolutely simple semisimple $k'$-group, then by Proposition \ref{prop:noetherian:semisimple} the topological group $G'(k')$ is Noetherian.

Otherwise, $G'$ is a basic exotic pseudo-reductive $k'$-group (see \cite[7.2]{CGP} or \cite[2.3.1]{Conrad-cosetfinite} for a convenient definition). Hence we are in the case of a field with $\mathrm{char}(k') \in \{2,3\}$.
Then, by \cite[7.3.3, 7.3.5]{CGP}, $G(k')$ is topologically isomorphic to $\overline{G}(k')$ where $\overline{G}$ is an absolutely simple semisimple $k'$-group.
Hence, $G(k')$ is Noetherian again by Proposition \ref{prop:noetherian:semisimple}.\qed
\end{proof}

\begin{Prop}\label{prop:noetherian:pseudo:reductive}
Let $k$ be a non-Archimedean local field and $G$ a pseudo-reductive group.
Then $G(k)$ is Noetherian.
\end{Prop}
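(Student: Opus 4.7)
The plan is to reduce the general pseudo-reductive case to two situations already handled: the Weil restriction of a semisimple or basic exotic pseudo-reductive group, treated by Lemma \ref{lem:noetherian:weil:restriction}, and the commutative case, treated by Proposition \ref{prop:noetherian:commutative}. The bridge is provided by the Conrad--Gabber--Prasad structure theorem, which realises any pseudo-reductive $k$-group (outside a short list of exceptions in characteristic $2$) as a ``standard'' construction built precisely from these two ingredients.

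Concretely, by \cite[5.1.1]{CGP}, together with its generalisation to basic exotic and basic exceptional fibres in characteristics $2$ and $3$ \cite[10.2.1]{CGP}, and a direct treatment for the remaining pseudo-reductive groups of minimal type $BC_n$ in characteristic $2$, there would exist a nonzero finite reduced $k$-algebra $k' = \prod_i k'_i$ with each $k'_i/k$ a finite extension of local fields, a smooth connected $k'$-group $G'$ each of whose fibres is either absolutely simple semisimple or basic exotic pseudo-reductive, a maximal $k'$-torus $T' \subset G'$, and a commutative pseudo-reductive $k$-group $C$ fitting into a factorisation $R_{k'/k}(T') \to C \to R_{k'/k}(T'/Z_{G'})$ of the natural map, such that
\[
G \;\cong\; \bigl( R_{k'/k}(G') \rtimes C \bigr) \big/ R_{k'/k}(T'),
\]
with $R_{k'/k}(T')$ antidiagonally embedded via the given map to $C$. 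Setting $\widetilde{G} := R_{k'/k}(G') \rtimes C$, this yields a short exact sequence of smooth $k$-groups
\[
1 \longrightarrow R_{k'/k}(T') \longrightarrow \widetilde{G} \stackrel{q}{\longrightarrow} G \longrightarrow 1,
\]
which by Lemma \ref{lem:exact:sequence:cohomology} induces an exact sequence of topological groups with $q_k : \widetilde{G}(k) \to G(k)$ continuous and open onto its image.

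I would first check that $\widetilde{G}(k)$ is Noetherian: $R_{k'/k}(G')(k)$ is Noetherian by Lemma \ref{lem:noetherian:weil:restriction}, and $C(k)$ is Noetherian by Proposition \ref{prop:noetherian:commutative} (a commutative pseudo-reductive group is trivially quasi-reductive), so Proposition \ref{prop:noetherian:groups}(4) applied to the semidirect product $\widetilde{G}(k) = R_{k'/k}(G')(k) \rtimes C(k)$ gives the claim. By Proposition \ref{prop:noetherian:groups}(1), the open subgroup $q_k(\widetilde{G}(k)) \subset G(k)$ is then itself Noetherian.

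The main obstacle is closing the gap between this open subgroup and all of $G(k)$: the coset space $G(k)/q_k(\widetilde{G}(k))$ embeds into the Galois cohomology pointed set $H^1(k, R_{k'/k}(T'))$, which by Shapiro's lemma equals $H^1(k', T') = \prod_i H^1(k'_i, T'_i)$ and is finite by local Tate duality for tori over local fields. Hence $q_k(\widetilde{G}(k))$ has finite index in $G(k)$, and Proposition \ref{prop:noetherian:groups}(6) yields that $G(k)$ itself is Noetherian. The residual technical work is to verify that the generalised standard presentation is indeed available in characteristics $2$ and $3$, and to carry out a direct d\'evissage for the handful of non-standard pseudo-reductive groups in characteristic $2$ that do not fit into this framework.
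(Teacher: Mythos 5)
Your overall strategy --- realise $G$ through the Conrad--Gabber--Prasad (generalised) standard presentation, prove Noetherianity of the ambient group $R_{k'/k}(G') \rtimes C$ by combining Lemma \ref{lem:noetherian:weil:restriction} with Proposition \ref{prop:noetherian:commutative}, and descend to $G(k)$ through the open finite-index image of $q_k$ via Proposition \ref{prop:noetherian:groups}(6) --- is exactly the paper's. But the two points you defer as ``residual technical work'' are precisely where the paper has to do genuine work in characteristics $2$ and $3$, and as written your finiteness argument has a gap at the first of them.

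First, the kernel of the generalised standard presentation is $R_{k'/k}(C')$ with $C' = \mathcal{Z}_{G'}(T')$ a Cartan $k'$-subgroup, not $R_{k'/k}(T')$. When a fibre $G'_i$ is absolutely simple semisimple these coincide, but when $G'_i$ is basic exotic the Cartan subgroup $C'_i$ is a commutative pseudo-reductive group that is \emph{not} a torus, so ``local Tate duality for tori'' does not apply to it and the finiteness of $H^1(k'_i,C'_i)$ --- which is the crux of the descent step --- is not yet established. The paper closes this by using the quotient map $G'_i \to \overline{G'_i}$ onto an absolutely simple semisimple group, which carries $C'_i$ onto a maximal torus $\overline{C'_i}$ in such a way that $C'_i\bigl((k'_i)_s\bigr) \to \overline{C'_i}\bigl((k'_i)_s\bigr)$ is a Galois-equivariant bijection, whence $H^1(k'_i,C'_i) \simeq H^1(k'_i,\overline{C'_i})$ is finite by \cite[4.1.6, 4.1.7]{Conrad-cosetfinite}. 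Second, in characteristic $2$ the decomposition \cite[10.2.1]{CGP} produces a totally non-reduced direct factor $G_2$ that does not fit into the generalised standard presentation at all; the paper disposes of it using \cite[10.1.4]{CGP}, which writes $G_2$ as a Weil restriction of basic non-reduced pseudo-simple groups, and \cite[9.9.4]{CGP}, which identifies the rational points of each such group with $\mathrm{Sp}_{2n}(K)$ for a local field $K$, so that Proposition \ref{prop:noetherian:semisimple} and Lemma \ref{lem:noetherian:weil:restriction} apply. With these two repairs your argument coincides with the paper's proof.
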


\begin{proof}
This proof almost follows the proof of \cite[4.1.9]{Conrad-cosetfinite}, based on structure theorem of pseudo-reductive groups over a local field.
Let us recall the main steps of this proof.

If $k$ is any field of characteristic $p \neq 2,3$, then a pseudo-reductive $k$-group is always standard according to \cite[5.1.1]{CGP}.

If $k$ is a local field of characteristic $p \in \{2,3\}$, then we are in the convenient case of a base field $k$ with $[k:k^p] = p$.
Hence, by theorem \cite[10.2.1]{CGP}, $G$ is the direct product $G_1 \times G_2$ of a generalised standard pseudo-reductive $k$-group $G_1$ and a totally non-reduced pseudo-reductive $k$-group $G_2$.
Moreover, the $k$-group $G_2$ is always trivial when $p \neq 2$.

\textbf{First step:} Assume $G_2$ is not trivial (hence $\mathrm{char}(k) = 2$).
By \cite[9.9.4]{CGP}, the topological group $H(k)$, deduced from a basic non-reduced pseudo-simple $k$-group $H$ (see definition \cite[10.1.2]{CGP}) is topologically isomorphic to $\mathrm{Sp}_{2n}(K)$ for some $n$ and an extension of local fields $K/k$.
By Proposition \ref{prop:noetherian:semisimple}, $\mathrm{Sp}_{2n}(K)$ is Noetherian, hence so is $H(k)$.
By \cite[10.1.4]{CGP}, the totally non-reduced $k$-group $G_2$ is isomorphic to a Weil restriction $R_{k'/k}(G_2')$ where $k'$ is a nonzero finite reduced $k$-algebra and fibers of $G_2'$ are basic non-reduced pseudo-simple $k$-groups.
By Lemma \ref{lem:noetherian:weil:restriction}, $G_2(k)$ is Noetherian.

\textbf{Second step:} From now on, we can assume that $G = G_1$ is a generalised standard pseudo-reductive $k$-group, together with a generalised standard presentation $(G',k'/k,T',C)$ and $C' = \mathcal{Z}_{G'}(T')$ where $k'$ is a nonzero finite reduced $k$-algebra, $T'$ is a maximal $k'$-torus of $G'$ and $C$ is a Cartan $k$-subgroup of $G$.
Write $k'=\prod_{i \in I} k'_i$ where $k'_i/k$ are finite extensions of local fields.
By definition of a generalised standard presentation, $G'$ is a $k'$-group whose fibers, denoted by $G'_i$, are absolutely simple simply connected semisimple or basic exotic pseudo-reductive.
Hence, by Lemma \ref{lem:noetherian:weil:restriction}, the topological group $R_{k'/k}(G')(k)$, which is topologically isomorphic to $\prod_{i \in I} G'_i(k'_i)$, is Noetherian.
Moreover, by Propositions \ref{prop:noetherian:commutative} and \ref{prop:noetherian:groups}(4), the topological group $\Big( R_{k'/k}(G') \rtimes C \Big)(k)$ is Noetherian.

\textbf{Third step :} under the above notations $H^1\big(k,R_{k'/k}(C')\big)$ is finite : this is exactly a part of the proof of \cite[4.1.9]{Conrad-cosetfinite}.
By \cite[4.1.6]{Conrad-cosetfinite}, there is a natural group homomorphism $H^1\big(k,R_{k'/k}(C')\big) \simeq \prod_{i \in I} H^1\big(k'_i,C'_i\big)$.
If $G'_i$ is semisimple, the cartan subgroup $C'_i$ is a torus and $H^1\big(k'_i,C'_i\big)$ is finite by \cite[4.1.7]{Conrad-cosetfinite}.
Otherwise $G'_i$ is a basic exotic pseudo-reductive group.
There is a quotient map on an absolutely simple semisimple group $G'_i \rightarrow \overline{G'_i}$ carrying $C'_i$ onto a cartan subgroup (a torus) $\overline{C'_i}$ of $\overline{G'_i}$.
Over a separable closure $(k'_i)_s$ the injective map of rational points $C'_i((k'_i)_s) \rightarrow \overline{C'_i}((k'_i)_s)$ becomes bijective.
By \cite[4.1.6]{Conrad-cosetfinite}, there is an isomorphism $H^1(k'_i,C'_i) \simeq H^1(k'_i,\overline{C'_i})$ and the second one is finite by \cite[4.1.7]{Conrad-cosetfinite} again, since $C'_i((k'_i)_s)$ is Galois-equivariantly identified with $(k'_i)_s$-points of a $k'_i$-torus in such cases.

By definition of a generalised standard presentation \cite[10.1.9]{CGP}, we have a group isomorphism:
$$G \simeq \Big(R_{k'/k}(G') \rtimes C \Big) / R_{k'/k}(C')$$
where $R_{k'/k}(C')$ is realized as a central subgroup of $R_{k'/k}(G') \rtimes C$.

Thus, by \cite[I.5.6 Cor. 2]{SerreCohomologieGaloisienne}, we have an exact sequence of group homomorphisms:
$$1 \rightarrow R_{k'/k}(C')(k) \rightarrow \Big(R_{k'/k}(G') \rtimes C \Big)(k) \stackrel{\pi_k}{\rightarrow} G(k) \stackrel{\delta}{\rightarrow} H^1(k,R_{k'/k}(C'))$$
Thus, as in \cite[4.1.9 (4.1.2)]{Conrad-cosetfinite}, the continuous morphism between topological groups $\pi_k : \Big(R_{k'/k}(G') \rtimes C \Big)(k) \rightarrow G(k)$ is open with a normal image which has finite index since the group $H^1(k,R_{k'/k}(C'))$ is finite.
Hence, by \ref{prop:noetherian:groups}(6) applied to this morphism $\pi_k$, the group $G(k)$ is Noetherian.\qed
\end{proof}

\begin{Rq}
In this proof, the strategy was to reduce the problem from rational points of pseudo-reductive groups to an open subgroup of a quotient of rational points of reductive groups.
There exist pseudo-reductive groups whose rational points are not that of some reductive group.
For instance, consider $k'/k$ a finite extension of local fields of characteristic $p$.
The group $G = \frac{R_{k'/k}(\mathrm{SL}_{p,k'})}{R_{k'/k}(\mu_{p})}$ is pseudo-reductive according to \cite[1.3.4]{CGP}. One can show \cite[1.4.7]{CGP} that $G$ is not a $k$-isogenous quotient of a $k$-group of the form $R_{K/k}(H)$ where $K$ is a nonzero finite reduced $k$-algebra and $H$ is a $K$ group whose fibers over $\mathrm{Spec}(K)$ are connected reductive groups.
\end{Rq}

\subsubsection*{General case}

\begin{Prop}\label{prop:noetherian:quasi:reductive}
Let $k$ be a non-Archimedean local field and $G$ be a quasi-reductive group.
Then $G(k)$ is Noetherian.
\end{Prop}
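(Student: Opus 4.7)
The plan is a straightforward dévissage from the quasi-reductive case to the pseudo-reductive case, via the defining short exact sequence
$$1 \to \mathcal{R}_{u,k}(G) \to G \to G/\mathcal{R}_{u,k}(G) \to 1$$
of the unipotent $k$-radical. Writing $U = \mathcal{R}_{u,k}(G)$, my first step would be to observe that the quotient $G/U$ is pseudo-reductive: any smooth connected unipotent normal $k$-subgroup of $G/U$ would pull back to a smooth connected unipotent normal $k$-subgroup of $G$ strictly containing $U$, contradicting maximality. Therefore Proposition \ref{prop:noetherian:pseudo:reductive} applies and $(G/U)(k)$ is a Noetherian topological group.

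Next I would handle the kernel. The quasi-reductivity hypothesis is exactly that $U$ is $k$-wound. By Oesterlé's compactness criterion recalled in the discussion after Definition \ref{def:quasi:reductive}, this implies that $U(k)$ is compact (in residual characteristic $p$, where $k$ may be imperfect; in characteristic $0$ there are no nontrivial $k$-wound smooth connected unipotent $k$-groups, so $U(k)$ is trivially compact). A compact topological group is Noetherian by Proposition \ref{prop:noetherian:groups}(2), so $U(k)$ is Noetherian.

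Finally I would assemble these inputs using Lemma \ref{lem:exact:sequence:cohomology}(b). Since $U$ is smooth, the induced morphism $\pi_k : G(k) \to (G/U)(k)$ is continuous and open, with kernel $U(k)$. It is therefore a strict homomorphism whose image is an open subgroup of the Noetherian group $(G/U)(k)$. Proposition \ref{prop:noetherian:groups}(3) then yields that $G(k)$ is Noetherian.

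Given the preceding results, there is no genuine obstacle: the proof is essentially an application of the three-term principle for Noetherian topological groups, with the compact \emph{versus} $k$-wound equivalence bridging the algebraic hypothesis and the topological conclusion. The one point that deserves verification, as indicated above, is that $G/U$ is pseudo-reductive, which is a standard consequence of the maximality characterization of $\mathcal{R}_{u,k}(\cdot)$.
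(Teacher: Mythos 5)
Your proposal is correct and follows essentially the same route as the paper: the short exact sequence for the unipotent $k$-radical, Oesterl\'e's compactness criterion for the $k$-wound kernel, Proposition \ref{prop:noetherian:pseudo:reductive} for the pseudo-reductive quotient, and Proposition \ref{prop:noetherian:groups}(3) via the open morphism from Lemma \ref{lem:exact:sequence:cohomology}(b). The only additions are your (correct) justification that $G/\mathcal{R}_{u,k}(G)$ is pseudo-reductive and the characteristic-zero remark, both of which the paper leaves implicit.
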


\begin{proof}
Consider the pseudo-reductive quotient of $G$ :
\begin{equation*}
1 \longrightarrow \mathcal{R}_{u,k}(G) \longrightarrow G \stackrel{\pi}{\longrightarrow} G / \mathcal{R}_{u,k}(G) \longrightarrow 1
\end{equation*}

By Lemma \ref{lem:exact:sequence:cohomology}(b) one has the following exact sequence of topological groups:
\begin{equation*}
1 \longrightarrow \mathcal{R}_{u,k}(G)(k) \longrightarrow G(k) \stackrel{\pi_k}{\longrightarrow} \big( G / \mathcal{R}_{u,k}(G) \big) (k)
\end{equation*}
where the homomorphism $\pi_k$ is open because $\mathcal{R}_{u,k}(G)$ is smooth.

Applying \cite[VI.1]{Oesterle} to the $k$-wound unipotent group $\mathcal{R}_{u,k}(G)$, the topological group $\mathcal{R}_{u,k}(G)(k)$ is compact, hence it is Noetherian by Proposition \ref{prop:noetherian:groups}(2).
Applying Proposition \ref{prop:noetherian:pseudo:reductive} to the pseudo-reductive $k$-group $G / \mathcal{R}_{u,k}(G)$, we get that the topological group $\big( G / \mathcal{R}_{u,k}(G) \big) (k)$ is Noetherian.
Hence, by Proposition \ref{prop:noetherian:groups}(3), the topological group $G(k)$ is Noetherian.\qed
\end{proof}

\subsection{Proof of the equivalence theorem}

Now, there are no extra difficulties to prove Theorem \ref{thm:equivalence:quasi:reductive} giving an equivalence between an algebraic property and topological ones.
We prove successively $(iii) \text{ or } (iv) \Rightarrow (i) \Rightarrow (ii) \Rightarrow (iii) \text{ and } (iv)$.

Let us prove $(iii) \text{ or } (iv) \Rightarrow (i)$.

Because this step focuses on the topological properties of the unipotent radical, let us introduce a purely topological group theoretic definition close to this algebraic group notion.

\begin{Def}\label{def:locally:elliptic}
A locally compact group $G$ is \textbf{locally elliptic} if every compact subset of $G$ is contained in a compact open subgroup of $G$.
\end{Def}

\begin{Lem}\label{lem:locally:elliptic:maximal}
Let $G$ be a totally disconnected locally compact group.
Let $R$ be a closed locally elliptic normal subgroup of $G$.
\begin{enumerate}
\item If $G$ admits a maximal compact subgroup, then $R$ is compact.
\item If $G$ admits a maximal pro-$p$ subgroup and if every compact subgroup of $R$ is pro-$p$, then $R$ is pro-$p$.
\end{enumerate}

\end{Lem}

\begin{proof}
Let $U$ be a compact subgroup of $G$. It is locally elliptic.
Then $R \cdot U$ is a closed subgroup of $G$.
Since $U$ and $R$ are locally elliptic, so is the group $R \cdot U$ \cite[4.D.6 (2)]{CornulierHarpe}.
Let $V$ be a compact relatively open subgroup of $R$.
Since $R \cdot U$ is locally elliptic, there exists a compact subgroup $W$ of $R \cdot U$ containing $U$ and $V$.

(1) If $U$ is a maximal compact subgroup of $G$, we get $U = W$ and therefore $V \subset U$.

(2) Assume that every compact subgroup of $R$ is pro-$p$, therefore $V$ is pro-$p$.
The pro-$p$ group $U$ normalizes the group $W \cap R$, which is pro-$p$ being a compact subgroup of $R$.  Hence, the group $W = (W \cap R) \cdot U$ is a pro-$p$ group (as the image of a semi-direct product of pro-$p$ groups $(W \cap R) \rtimes U$ by the surjective morphism $(W \cap R) \rtimes U \rightarrow (W \cap R) \cdot U \subset G$ induced by multiplication  \cite[2.2.1(e)]{RibesZalesskii} and \cite[1.4 Prop.4(b)]{SerreCohomologieGaloisienne}).
If $U$ is a maximal pro-$p$ subgroup of $G$, we get $U = W$ and therefore $V \subset U$.

Since $R$ is locally elliptic, it is the union of its compact subgroups. We deduce in both cases that $R$ is a closed subgroup of $U$, therefore it is compact.
Moreover $R$ is pro-$p$ when $U$ is pro-$p$.
\qed
\end{proof}

In particular, this will be applied to the locally elliptic radical \cite[4.D.7 (7)]{CornulierHarpe} of a topological group. Now, we go back to algebraic groups.

\begin{Lem} \label{lem:unipotent:open:pro-p:covering}
Let $k$ be a non-Archimedean local field.
If $U$ is a smooth connected affine unipotent $k$-group,
then $U(k)$ is the union of an increasing sequence, indexed by $\ZZ$, of pro-$p$ open subgroups $(U_n)_{n \in \ZZ}$ whose intersection is trivial.

Moreover, when $U$ is not $k$-wound, one can assume that $U_n$ is strictly increasing.
\end{Lem}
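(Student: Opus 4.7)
The plan is to build the exhaustion of $U(k)$ from a closed $k$-embedding $U \hookrightarrow \mathbf{U}_N \subset \mathrm{GL}_N$ into the group of strict upper-triangular unipotent matrices (such an embedding exists for any smooth affine unipotent $k$-group). I will exhibit an increasing $\ZZ$-indexed chain of compact open pro-$p$ subgroups of $\mathbf{U}_N(k)$ and then intersect with $U(k)$. The key device is conjugation by a suitable diagonal element of $\mathrm{GL}_N(k)$ that dilates $\mathbf{U}_N(\mathcal{O}_k)$ in the positive direction, combined with the standard principal congruence filtration in the negative direction.

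Concretely, take $t = \mathrm{diag}(1, \varpi, \varpi^2, \ldots, \varpi^{N-1}) \in \mathrm{GL}_N(k)$ and, for $n \geq 0$, set $K_n = t^n \mathbf{U}_N(\mathcal{O}_k) t^{-n}$. An entrywise computation shows that conjugation by $t$ scales the $(i,j)$-entry (for $i<j$) by $\varpi^{i-j}$, a strictly negative power of $\varpi$, so $\mathbf{U}_N(\mathcal{O}_k) \subseteq K_1$; more generally any fixed matrix in $\mathbf{U}_N(k)$ lies in $K_n$ for $n$ large enough, giving $\bigcup_{n \geq 0} K_n = \mathbf{U}_N(k)$. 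For $n < 0$, define $K_n = \{I + M : M \in \mathfrak{m}^{|n|} \mathrm{Mat}_N(\mathcal{O}_k) \text{ strictly upper triangular}\}$, which is a subgroup because products of strict upper-triangular matrices land in higher powers of $\mathfrak{m}$. Each $K_n$ is pro-$p$: $\mathbf{U}_N(\mathcal{O}_k) = \varprojlim_i \mathbf{U}_N(\mathcal{O}_k/\mathfrak{m}^i)$ is an inverse limit of finite $p$-groups (each factor has order $|\kappa|^{i\binom{N}{2}}$, a $p$-power), and conjugates and congruence subgroups inherit the pro-$p$ property. Setting $U_n := U(k) \cap K_n$ produces an increasing $\ZZ$-indexed chain of compact open pro-$p$ subgroups of $U(k)$; the union and intersection properties are inherited from those of $(K_n)$ via the closed embedding $U(k) \hookrightarrow \mathbf{U}_N(k)$, whose subspace topology coincides with the $k$-topology.

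For the ``moreover'' clause, suppose $U$ is not $k$-wound: then by Oesterlé's theorem (recalled in the introduction) $U(k)$ is not compact, so the increasing sequence $(U_n)$ of compact open subgroups cannot be eventually constant as $n \to +\infty$; conversely, any non-trivial open subgroup of $U(k)$ is infinite (being a non-empty open subset of the positive-dimensional $k$-analytic manifold $U(k)$), so $\bigcap U_n = \{1\}$ prevents eventual constancy as $n \to -\infty$. Passing to a subsequence on which consecutive inclusions are strict and re-indexing by $\ZZ$ yields the strictly increasing variant. The only substantive verification in the whole argument is the entrywise valuation calculation underlying the construction of $(K_n)_{n \geq 0}$; everything else is formal bookkeeping, and the dichotomy between $k$-wound and non-$k$-wound enters only through Oesterlé's criterion in this final step.
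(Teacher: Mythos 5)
Your proof is correct and follows essentially the same route as the paper's: a closed embedding into the strict upper-triangular unipotent group, a $\ZZ$-indexed chain of pro-$p$ open congruence-type subgroups there (your $K_n$ for $n\geq 0$, obtained by conjugating $\mathbf{U}_N(\mathcal{O}_k)$ by a diagonal element, coincide with the paper's $P_n$, and your uniform congruence subgroups for $n<0$ are an equally valid variant of the paper's graded ones), intersection with $U(k)$, and Oesterl\'e's compactness criterion for the strictly increasing refinement. No gaps.
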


\begin{proof}
Denote by $\varpi$ a uniformizer of $\mathcal{O}_k$ and, for all $n \in \ZZ$, denote $\mathfrak{m}^n = \varpi^n \mathcal{O}_k \subset k$.
Denote by $\mathbb{U}_{m}$ the smooth connected unipotent $k$-split $k$-group of upper triangular unipotent matrices.
For $n \in \ZZ$, define
\begin{equation*}
  P_n = \left\{ (x_{i,j})_{1 \leq i,j \leq m}\,,\,
    \begin{array}{cl}
      x_{i,j} = 0 & \text{ if } i > j\\
      x_{i,j} = 1 & \text{ if } i = j\\
      x_{i,j} \in \mathfrak{m}^{n(i-j)} & \text{ if } i < j
    \end{array} 
    \right\}
     \subset \mathbb{U}_m(k)
\end{equation*}

The sequence $(P_n)_{n \in \ZZ}$ is an increasing sequence of groups whose intersection is trivial and union is equal to $\mathbb{U}_{m}(k)$.
For all $n$, the subgroup $P_n$ of $\mathbb{U}_{m}(k)$ is open since it contains the open neighbourhood of identity ${\Big(1 + \mathfrak{m}^{|n|(m-1)} \mathcal{M}_m(k)\Big) \cap \mathbb{U}_m(k)}$.
And it is a pro-$p$-group since every $P_{n+1}$ is a normal subgroup of $P_n$ such that the quotient $P_n / P_{n+1}$ is a $p$-group.

By \cite[15.5(ii)]{Borel}, there is a closed immersion $U \rightarrow \mathbb{U}_m$ for some $m \in \NN$.
Define $U_n = P_n \cap U(k)$.
The sequence of subgroups $(U_n)_{n\in \ZZ}$ is increasing,
the intersection $\displaystyle \bigcap_{n \in \ZZ} U_n$ is trivial
and the union is $\displaystyle \bigcup_{n \in \ZZ} U_n = U(k)$,
because the same holds for $(P_n)$ and $\mathbb{U}_m(k)$.
Every $U_n$ is a pro-$p$ subgroup of $U(k)$ because ${U(k) \subset \mathbb{U}_m(k)}$ is closed,
and it is an open subgroup of $U(k)$ because $P_n$ is open in $\mathbb{U}_m(k)$.

Now, assume that $U$ is not $k$-wound.
Since $U(k)$ is not compact by \cite[VI.1]{Oesterle}, every $U_n$ is distinct from $U(k)$.
Moreover, $U_n$ is never trivial because it is open in $U(k)$.
Hence, one can extract a strictly increasing sequence $\big(U'_{\varpi(n)}\big)_{n \in \ZZ}$ with the same properties as before.\qed
\end{proof}

In particular, this lemma tells us that unipotent groups over local fields are locally elliptic. More precisely:

\begin{Prop} \label{prop:maximal:compact:existence:implies:quasireductive}
Let $k$ be a non-Archimedean local field of residual characteristic $p$ and $G$ be a smooth connected affine $k$-group.
\begin{enumerate}
\item The rational points $\mathcal{R}_{u,k}(G^0)(k)$ of the unipotent $k$-radical of $G$ is a closed locally elliptic normal subgroup of $G(k)$ of which any compact subgroup is pro-$p$.
\item Assume that the topological group $G(k)$ contains either a maximal pro-$p$ subgroup, or a maximal compact subgroup. Then, $G^0$ is a quasi-reductive $k$-group.
\end{enumerate}
\end{Prop}

\begin{proof}
(1) Denote by $U = \mathcal{R}_{u,k}(G^0)$ the unipotent $k$-radical of $G$.
By Lemma \ref{lem:unipotent:open:pro-p:covering} and Lemma \cite[2.3]{Caprace-amenable}, we get that every compact subset of $U(k)$ is contained in some pro-$p$ open subgroup. In particular, $U(k)$ is locally elliptic and any compact subgroup of $U(k)$ is pro-$p$ as closed subgroup of a pro-$p$ group.

(2) By Lemma \ref{lem:locally:elliptic:maximal}, we get that $U(k)$ is compact.
Hence, by \cite[VI.1]{Oesterle}, the topological group $U$ is $k$-wound.
In other words, the connected algebraic group $G^\circ$ is quasi-reductive.
\qed
\end{proof}

Let us prove $(i) \Rightarrow (ii)$.

\begin{Prop} \label{prop:noetherian:general:case}
Let $k$ be a non-Archimedean local field and $G$ a smooth affine $k$-group.
If $G^0$ is a quasi-reductive $k$-group, then $G(k)$ is Noetherian.
\end{Prop}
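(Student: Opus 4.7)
The plan is to reduce from $G$ to its identity component $G^0$ and then invoke Proposition \ref{prop:noetherian:quasi:reductive}.

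First I would observe that because $G$ is a smooth $k$-scheme, the identity component $G^0$ is an open (and closed) normal $k$-subgroup, and the quotient $G/G^0$ is a finite étale $k$-group scheme. Consequently $G^0(k)$ is an open subgroup of $G(k)$, and the natural injection $G(k)/G^0(k) \hookrightarrow (G/G^0)(k)$ exhibits $G^0(k)$ as a normal subgroup of finite index in $G(k)$, since the group of $k$-points of a finite étale $k$-scheme is finite.

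Next, by hypothesis $G^0$ is a (connected smooth affine) quasi-reductive $k$-group, so Proposition \ref{prop:noetherian:quasi:reductive} gives that $G^0(k)$ is Noetherian as a topological group.

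Finally I would apply Proposition \ref{prop:noetherian:groups}(6) to the continuous inclusion $\psi\colon G^0(k)\hookrightarrow G(k)$: its image is the normal finite-index subgroup $G^0(k)$ of $G(k)$, and the source is Noetherian, hence $G(k)$ is Noetherian. There is essentially no obstacle here: the work has already been done in Proposition \ref{prop:noetherian:quasi:reductive}, and the only thing to check is that passing from a connected component to the whole group is harmless, which is exactly what item (6) of Proposition \ref{prop:noetherian:groups} encodes.
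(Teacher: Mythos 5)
Your proof is correct and follows essentially the same route as the paper: reduce to the identity component via the finite quotient $G/G^0$ and then invoke Proposition \ref{prop:noetherian:quasi:reductive}. The only cosmetic difference is that the paper applies Proposition \ref{prop:noetherian:groups}(3) to the open quotient morphism $G(k)\rightarrow (G/G^0)(k)$ supplied by Lemma \ref{lem:exact:sequence:cohomology}(b), whereas you apply item (6) to the finite-index normal inclusion $G^0(k)\hookrightarrow G(k)$; both hinge on the same input, namely the finiteness of $(G/G^0)(k)$.
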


\begin{proof}
The identity component $G^0$ of $G$ is a smooth normal $k$-subgroup of $G$ \cite[II.§5 1.1 and 2.1]{DemazureGabriel}, and the quotient $F=G/G^0$ is a (smooth) finite $k$-group \cite[II.§5 1.10]{DemazureGabriel}.

By Lemma \ref{lem:exact:sequence:cohomology}(b), we have an exact sequence of topological groups
\begin{equation*}
1 \rightarrow G^0(k) \rightarrow G(k) \stackrel{\pi_k}{\rightarrow} F(k)
\end{equation*}
where $\pi_k$ is an open morphism.

By Proposition \ref{prop:noetherian:quasi:reductive}, the topological group $G^0(k)$ is Noetherian and $F(k)$ is Noetherian because it is finite.
As a consequence, by Proposition \ref{prop:noetherian:groups}(3), the topological group $G(k)$ is Noetherian.\qed
\end{proof}

To conclude, let us finish the proof by showing that $(ii) \Rightarrow (iii) \text{ and } (iv)$.

\begin{Prop}\label{prop:noetherian:implies:maximal}
Let $G$ be a Noetherian totally disconnected locally compact group.
It admits a maximal compact subgroup.
Moreover, if $G$ admits an open pro-$p$ subgroup, then it admits a maximal pro-$p$ subgroup.
\end{Prop}

\begin{proof}
By contradiction, assume than $G$ does not contains a maximal pro-$p$ (resp. compact) subgroup.

By induction, it is possible to define a strictly increasing sequence of pro-$p$ (resp. compact) open subgroups.
Basis of the induction is given by the existence of an open pro-$p$ (resp. compact) subgroup.
Induction step: since $G$ does not admit a maximal pro-$p$ (resp. compact) subgroup, given a pro-$p$ (resp. compact) open subgroup $U_n$, there exists a pro-$p$ (resp. compact) subgroup $U_{n+1}$ containing $U_n$ strictly.
The group $U_{n+1}$ is open since it contains $U_n$.

Such a sequence cannot exist since $G$ is Noetherian: there is a contradiction.\qed
\end{proof}

In particular, by applying Lemma \ref{lem:existence:open:pro-p} and Proposition \ref{prop:noetherian:implies:maximal} to a smooth affine algebraic group $G$ over a non-Archimedean local field $k$ whose the group of rational points $G(k)$ is Noetherian, we get the implications $(ii) \Rightarrow (iii) \text{ and } (iv)$ of Theorem \ref{thm:equivalence:quasi:reductive}.

Let us now prove the second part of Theorem \ref{thm:equivalence:quasi:reductive}.

\begin{Lem}\label{lem:link:with:openness}
Let $G$ be a totally disconnected locally compact group.
Every compact subgroup of $G$ is contained in an open compact subgroup of $G$.
Moreover, if $G$ contains a open pro-$p$ subgroup $U$, then every pro-$p$ subgroup of $G$ is contained in some open pro-$p$ subgroup of $G$.
\end{Lem}

\begin{proof}
Let $P$ be a compact subgroup of $G$ and $U$ be an open compact subgroup of $G$.
The index $[P : U \cap P]$ is finite since $P$ is compact and $U\cap P$ is open in $P$.
Hence, the set $\{x^{-1} U x \,,\,x \in P \}$ is finite.
Define $U_0 = \bigcap_{x \in P} x^{-1} U x$.
It is an open compact subgroup of $G$ normalised by $P$, and it is pro-$p$ when $U$ is pro-$p$.
Hence the group $P_0 = P \cdot U_0$ is an open subgroup of $G$.
It is compact as the image of $P \times U_0$ by the continuous multiplication map $G \times G \rightarrow G$.
When, moreover, $P$ and $U$ are pro-$p$, the group $P_0$ is pro-$p$ as the image of the pro-$p$ group $P \ltimes U_0$ by the surjective multiplication homomorphism $P \ltimes U_0 \rightarrow P \cdot U_0$.\qed
\end{proof}

\begin{proof}[Proof of second part of Theorem \ref{thm:equivalence:quasi:reductive}]
By Lemma \ref{lem:existence:open:pro-p} applied to $G(k)$, we can apply Lemma \ref{lem:link:with:openness} to $G(k)$.
Using the same construction by induction as in proof of \ref{prop:noetherian:implies:maximal}, statements (1) and (2) are a direct result from Noetherianity and Lemma \ref{lem:link:with:openness}.\qed
\end{proof}

\subsection{Topological structure of pseudo-reductive groups over a local field}

In order to prove the existence of a maximal compact subgroup of a pseudo-reductive group over a local field, the standard presentation of these latter was used. This presentation actually provides a finer way to describe the structure of pseudo-reductive groups.
Through standard presentations, a pseudo-reductive group can be described as a quotient of some semi-direct product by a normal subgroup.
We begin by describing the structure of the rational points of such a subgroup, as follows:

\begin{Lem}\label{lem:product:generalized:standard}
If $G$ is a generalized standard pseudo-reductive $k$-group arising from a generalised standard presentation $(G',k'/k,T',C)$.
Then $R_{k'/k}(G')(k)$ is topologically isomorphic to a direct product of groups that are each isomorphic to the rational points of a simply connected absolutely simple algebraic group over a local field of the same characteristic and residue characteristic as $k$.
\end{Lem}

\begin{proof}
Write $k'=\prod_{i \in I} k'_i$ where $k'_i/k$ are finite extensions of local fields and denote by $G'_i$ the fibers of $G' \to \operatorname{Spec}(k')$ that are simply connected groups by definition \cite[10.1.9]{CGP}.
The group $R_{k'/k}(G')$ is topologically isomorphic to $\prod_i G'_i(k'_i)$.
If $G'_i$ is a simply connected basic exotic pseudo-reductive $k'_i$-group, then by \cite[7.3.3 (2)]{CGP} there is a topological isomorphism $f_i$ from $G'_i(k'_i)$ to a group $\overline{G'_i}(k'_i)$ where $\overline{G'_i}$ is simply connected by \cite[7.1.5]{CGP} and an absolutely simple $k'_i$-group according to \cite[7.3.5]{CGP}.
If $G'_i$ is absolutely pseudo-simple in characteristic $\operatorname{char}(k'_i) = 2$ with a non-reduced root system, then Proposition \cite[9.9.4(2)]{CGP} provides a field $K_i$ and a topological isomorphism $f_i : G'_i(k'_i) \to \mathrm{Sp}_{2n}(K_i)$.
Otherwise, it means that $G'_i$ is simply connected and absolutely simple \cite[10.2.1]{ConradPrasad} and one take $f_i$ to be the identity map.
Thus, one gets the suitable topological isomorphism $\prod_i f_i$.
\qed
\end{proof}

Thanks to the generalized standard presentation (with suitable assumptions in characteristic $2$, satisfied by the local fields) it is possible to identify a pseudo-reductive group with a quotient of a suitable semi-direct product by a central subgroup whose $H^1$ is finite.
Of course, one cannot identify the rational points of this quotient to the quotient of the rational points of these algebraic groups.
Nevertheless, we will be able to use the topological properties of the restriction to the rational points of the quotient map of algebraic groups in order to define quotient maps of topological groups by some well-chosen subgroups.
In this case, we do a quotient by the locally elliptic radical which is a topologically characteristic subgroup.
The established existence of maximal compact subgroups of the rational points of a pseudo-reductive group will allow us to reduce questions of local ellipticity to questions of compactness of groups.

\begin{Prop}\label{prop:structure:pseudo:reductive}
Let $G$ be a connected pseudo-reductive group over a non-Archimedean local field $k$.
The group of rational points $G(k)$ admits a chain of closed normal subgroups $1 \leqslant Q \leqslant S \leqslant G(k)$ such that
$Q$ is the locally elliptic radical of $G(k)$, therefore a compact group,
the quotient $S/Q$ is the internal direct product of finitely many non-compact, topologically simple, compactly generated locally compact groups
that are each isomorphic to the quotient of a simply connected isotropic simple algebraic group over a local field (of the same characteristic and residue characteristic as $k$) by its center,
and the quotient $G(k)/S$ is compactly generated and virtually abelian.
\end{Prop}

We prove this proposition by the following steps:
firstly, we study the rational points of the factor $R_{k'/k}(G')$ appearing in the standard presentation and its locally elliptic radical ;
secondly, we compare this locally elliptic radical to that of the semi-direct product appearing in the standard presentation ;
finally, we pass to the quotient in the standard presentation.

\begin{proof}
Given a locally compact group $H$, denote by $\operatorname{Rad}(H)$ its locally elliptic radical, that is a topologically characteristic subgroup, therefore a normal subgroup of $H$.

By Theorem \ref{thm:equivalence:quasi:reductive}, the group $G(k)$ has a maximal compact subgroup. Thus, its locally elliptic radical $Q = \operatorname{Rad}\big( G(k) \big)$ is compact by Lemma \ref{lem:locally:elliptic:maximal}.

Assume firstly that $G=G_1$ is a generalized standard pseudo-reductive group arising from a generalised standard presentation $(G',k'/k,T',C)$.
Recall that there is a subgroup $C'=\mathcal{Z}_{G'}(T')$ of $G'$
such that $R_{k'/k}(C')$ is a Cartan subgroup of $R_{k'/k}(G')$,
together with two $k$-group homomorphisms $(\varphi,\psi)$ providing
a factorization $R_{k'/k}(C') \stackrel{\varphi}{\longrightarrow} C \stackrel{\psi}{\longrightarrow} Z_{R_{k'/k}(G'), R_{k'/k}(C')}$
where $Z_{R_{k'/k}(G'), R_{k'/k}(C')}$ is some subgroup of the group
of $k$-automorphisms of $R_{k'/k}(G')$
that restricts to the identity of $R_{k'/k}(C')$.
This defines the semi-direct product $R_{k'/k}(G') \rtimes_\psi C$ and the commutative subgroup $R_{k'/k}(C')$ is realized in this semi-direct product as a central subgroup through the anti-diagonal embedding $(\iota,\varphi) : R_{k'/k}(C') \to R_{k'/k}(G') \rtimes_\psi C$ mapping $c' \mapsto \left( (c')^{-1}, \varphi(c') \right)$.

According to Lemma \ref{lem:product:generalized:standard}, the group $G'(k')$ is topologically isomorphic to a direct product $X = \prod_{i \in I} \widetilde{G_i}(k_i)$ where $k_i/k$ is a finite extension of local fields and $\widetilde{G_i}$ is a simply connected $k_i$-simple group.
For $i \in I$, denote by $X_i = \widetilde{G_i}(k_i)$ and by $\widetilde{G_i}(k_i)^+$ the subgroup of $\widetilde{G_i}(k_i)$ generated by all the unipotent elements of $\widetilde{G_i}(k_i)$ which are contained in the unipotent radical of some $k_i$-parabolic subgroup of $\widetilde{G_i}$.
Denote by $J$ the subset of elements $j \in I$ such that $\widetilde{G_j}$ is $k_j$-isotropic.
Since $\widetilde{G_j}$ is a $k_j$-isotropic reductive group,
its rational points $X_j$ is not a solvable group.
On the one hand, for any $j \in J$, since the extension $k_j/k$ is finite and $\widetilde{G_j}$ is simply connected, the Kneser-Tits problem \cite[§2]{PrasadRaghunathan-KneserTits} for non-Archimedean local fields says that $\widetilde{G_j}(k_j) = \widetilde{G_j}(k_j)^+$.
Moreover, by \cite[Main Theorem]{Tits-AlgebraicAbstract} the quotient of $\widetilde{G_j}(k_j)^+$ by its (abstract) center $\mathcal{Z}\big( \widetilde{G_j}(k_j) \big)$ is a non-solvable simple group.
It is non-compact by \cite[Theorem BTR]{Prasad-TitsTheorem} because $\mathcal{Z}\big( \widetilde{G_j}(k_j) \big)$ is finite.
For $j \in J$, denote by $\widehat{X_j} = \mathcal{Z}\big( G_j(k_j) \big)$.
On the opposite, for $j' \in I \setminus J$, the group $\widetilde{G_{j'}}$ is $k_{j'}$-anisotropic. Hence, $\widetilde{G_{j'}}(k_{j'})$ is compact by \cite[Theorem BTR]{Prasad-TitsTheorem}. Denote $\widehat{X_{j'}} = \widetilde{G_{j'}}(k_{j'})$.

We have $\operatorname{Rad}(X) = \operatorname{Rad}\big( \prod_{i\in I} \widetilde{G_i}(k_i) \big) = \prod_{i \in I} \widehat{X_i}$.
Indeed, the normal subgroup $\prod_{i \in I} \widehat{X_i}$ of $X$ is compact and, therefore, closed and locally elliptic so that it is contained in $\operatorname{Rad}(X)$.
Conversely, for any $i \in I$, the quotient $X_i / \widehat{X_i}$ is either a trivial group or a non-compact simple group, so that the projection of the normal compact subgroup $\operatorname{Rad}\big( X \big)$ of $X$ on the $j$-th factor is compact, therefore contained in $\widehat{X_j}$.

Now, the semi-direct product $G'(k') \rtimes_{\psi} C(k)$ arising from the standard presentation of $G$ is topologically isomorphic to a semi-direct product denoted by $Y = X\rtimes_{\psi_k} C(k)$. Since $R_{k'/k}(G') \rtimes_\psi C$ is a pseudo-reductive $k$-group, the group $Y$ has a maximal compact subgroup by Theorem \ref{thm:equivalence:quasi:reductive}, hence the locally elliptic radical $\operatorname{Rad}(Y)$ is compact by Lemma \ref{lem:locally:elliptic:maximal}.
Thus, since $X \cap \operatorname{Rad}(Y)$ is a normal compact subgroup of $X$, it is locally elliptic and therefore contained in $\operatorname{Rad}(X)$.
Conversely, since $\operatorname{Rad}(X)$ is a topologically characteristic subgroup of $X$ that is a closed normal subgroup of $Y$, we deduce that $\operatorname{Rad}(X)$ is a locally elliptic closed normal subgroup of $Y$.
Thus $\operatorname{Rad}(X) \subset \operatorname{Rad}(Y)$ and therefore $\operatorname{Rad}(X) = X \cap \operatorname{Rad}(Y)$.

Denote by $\mathcal{D}(H)$ the (abstract) derived subgroup of an abstract or topological group $H$.
Let $q : Y \to Y / \operatorname{Rad}(Y)$ be the quotient map.
Then $Z = q(X)$ is isomorphic to $X / X \cap \operatorname{Rad}(Y)$.
Since we have seen that $X / X \cap \operatorname{Rad}(Y) = X / \operatorname{Rad}(X) = \prod_{j \in J} X_j / \mathcal{Z}(X_j)$ is the direct product of non-commutative simple groups, it follows that $Z$ is a perfect group.
Since $Y / X$ is commutative, the derived group $\mathcal{D}(q(Y)) = \mathcal{D}\big(Y / \operatorname{Rad}(Y)\big)$ is a subgroup of $Z=q(X)$.
Because $Z$ is perfect, we also have that $Z = \mathcal{D}(q(X)) \subset \mathcal{D}(q(Y))$ so that $\mathcal{D}\big(Y / \operatorname{Rad}(Y)\big)$ is isomorphic to $\prod_{j \in J} X_j / \mathcal{Z}(X_j)$.

Now, consider the exact sequence deduced from that of a generalised standard presentation, given by \cite[I.5.6 Cor. 2]{SerreCohomologieGaloisienne}, of group homomorphisms:
$$1 \rightarrow R_{k'/k}(C')(k) \rightarrow \Big(R_{k'/k}(G') \rtimes C \Big)(k) \stackrel{\pi_k}{\rightarrow} G(k) \stackrel{\delta}{\rightarrow} H^1(k,R_{k'/k}(C'))$$
and identify $Y$ with $\Big(R_{k'/k}(G') \rtimes C \Big)(k)$ up to a topological isomorphism.
Since the locally elliptic radical is topologically characteristic and since $\pi_k$ has open, therefore closed, normal image by Lemma \ref{lem:exact:sequence:cohomology}, it follows that $\pi_k(\operatorname{Rad}(Y))$ is a topological characteristic subgroup of the image of $\pi_k$, therefore a closed normal locally elliptic subgroup of $G(k)$.
Thus $\pi_k(\operatorname{Rad}(Y))$ is contained in $Q = \operatorname{Rad}(G(k))$.
Hence $\pi_k$ induces a continuous homomorphism with finite index open normal image
$$\overline{\pi_k} : Y / \operatorname{Rad}(Y) \to G(k) / Q$$
so that we have the following commutative diagram of continuous group homomorphisms:
\begin{equation*}
	\xymatrix{
		X \ar@{^{(}->}[r] \ar@{->>}[d] \ar@{-->}[rrd]^{\phi} & Y \ar[r]^{\pi_k} \ar@{->>}[d] & G(k) \ar@{->>}[d] \\
		X/\operatorname{Rad}(X) \ar@{^{(}->}[r] & Y / \operatorname{Rad}(Y) \ar[r]^{\overline{\pi_k}} & G(k) / Q
	}
\end{equation*}

According to \cite[Cor 5.2]{BaderGelander}, for $i \in I$, every continuous homomorphism $\phi$ of the group $X_i = \widetilde{G_i}(k_i)$ to a locally compact group has closed image and it induces a homeomorphism $X_i / \ker(\phi) \to \phi(X_i)$.
We apply it to $\pi_k$, $\overline{\pi_k}$ and the continuous homomorphism $\phi$ obtained by composition:
$$X \twoheadrightarrow X / \operatorname{Rad}(X) \hookrightarrow Y / \operatorname{Rad}(Y) \stackrel{\overline{\pi_k}}{\longrightarrow} G(k) / Q.$$

For $j \in J$, the image of $X_j \subset Y$ in $G(k)$ by $\pi_k$ is non-trivial because $X_j$ is non-solvable and the kernel of $\pi_k$ is a central subgroup of $Y$.
Thus, the closed subgroup $\pi_k(X_j)$ of $G(k)$ is homeomorphic to $X_j / X_j \cap \ker \pi_k$ with $X_j \cap \ker \pi_k \subset \mathcal{Z}(X_j)$ that is a closed non-compact subgroup of $G(k)$, therefore not contained in $Q$.
Hence the simple group $\phi(X_j) = \overline{\pi_k}(X_j / \mathcal{Z}(X_j))$ is non-trivial, thus topologically isomorphic to $X_j / \mathcal{Z}(X_j)$.

{\bf Claim:} the homomorphism $\overline{\pi_k}$ induces a topological isomorphism $X / \operatorname{Rad}(X) \stackrel{\simeq}{\longrightarrow} \phi(X)$ and $\phi(X)$ is the internal direct product of simple groups $\phi(X_j)$ for $j \in J$.\\
We have shown that $\overline{\pi_k}$ is a continuous open homomorphism and that $\phi(X_j) = \overline{\pi_k}\left(X_j / X_j \cap \operatorname{Rad}(X) \right)$ is topologically isomorphic to $X_j/ \mathcal{Z}(X_j)$.
Since $\phi(X)= \overline{\pi_k}\left( X/ \operatorname{Rad}(X) \right)$, generated by the $\phi(X_j)$ is a closed subgroup of $G(k)/Q$, it suffices to prove that $\ker \overline{\pi_k} \cap X / \operatorname{Rad}(X)$ is trivial so that $\overline{\pi_k}$ realizes a topological isomorphism between $X/ \operatorname{Rad}(X)$ and $\phi(X)$.
Let $x \in X$ such that the coset $[x] \in X/\operatorname{Rad}(X)$ of $x$ with respect to $\operatorname{Rad}(X)$ is in the kernel of $\overline{\pi_k}$.
Then $\overline{\pi_k}([x]) = [\pi_k(x)] = Q$ so that $x \in P = \pi_k^{-1}(Q) \cap X$ that is a closed normal subgroup of $X$.
If we show that $P$ is compact therefore locally elliptic, then it is a subgroup of $\operatorname{Rad}(X)$ and we are done.

Consider the restriction ${\pi_k}|_X$ of the homomorphism $\pi_k$ to the subgroup $X \rtimes \{1\}$ so that $P = {{\pi_k}|_X}^{-1}(Q)$.
We prove that ${\pi_k}|_X : X \rtimes \{1\} \to \pi_k(X)$ is a proper map.
By \cite[3.3]{BaderGelander}, we know that $X$, being a direct product of rational points of connected semisimple groups over local fields, is quasi-semisimple (see definition \cite[3.2]{BaderGelander}).
Thus, by \cite[5.1]{BaderGelander}, the group homomorphism ${\pi_k}|_X$ induces a topological isomorphism $X / X \cap \ker \pi_k \to \pi_k(X)$.
To conclude that ${\pi_k}|_X$ is proper, it suffices to show that $\left( X \rtimes \{1\} \right) \cap \ker \pi_k$ is compact.
The exact sequence of the standard presentation gives us that $\ker \pi$ is the image of the anti-diagonal embedding of $R_{k'/k}(C')$ into $R_{k'/k}(G') \rtimes_\psi C$ through $(\iota,\varphi)$.
Then
\begin{align*}
\ker {\pi_k}|_X & = \left(\ker \pi \right) (k) \cap \left( X \rtimes \{1\} \right)\\
& = \left\{ c \in R_{k'/k}(C')(k),\ \left(c^{-1},\varphi(c)\right) \in X\rtimes_\psi \{1\} \right\}\\
& = \ker \varphi (k)
\end{align*}
But, according to \cite[10.1.10]{CGP}, we know that $\ker \varphi$ is a central subgroup of $R_{k'/k}(G')$.
Thus $\ker {\pi_k}|_X$ is finite.

Let $S$ be the subgroup of $G(k)$ containing $Q$ such that $S/Q = \phi(X)$.
Being an internal direct product of closed subgroups, $S/Q$ is also a closed subgroup of $G(k)/Q$.
Since $\phi(X) = \overline{\pi_k}\big( X / \operatorname{Rad}(X) \big) = \mathcal{D}\Big( \overline{\pi_k}\big( X / \operatorname{Rad}(X) \big) \Big)$ is a characteristic subgroup of  $\overline{\pi_k}\big( Y / \operatorname{Rad}(Y)\big)$ that is an open normal subgroup of $G(k)/Q$, it follows that $S/Q$ is a closed normal subgroup of $G(k)/Q$ topologically isomorphic to an internal direct product of simple groups of the form $\prod_{j \in J} X_j / \mathcal{Z}(X_j)$.

Since $\big(G(k)/Q\big) \Big/ \operatorname{im}(\overline{\pi_k})$ is finite and $\operatorname{im}(\overline{\pi_k}) \Big/ \big( S / Q \big)$ is abelian by construction, it follows that $G(k)/S$ is virtually abelian.

If $k$ is any field of characteristic $p \neq 2,3$, then a pseudo-reductive $k$-group is always standard according to \cite[5.1.1]{CGP}.
Because $k$ is a local field of characteristic $p \in \{2,3\}$, we are in the case of a base field $k$ with $[k:k^p] = p$.
Hence, by theorem \cite[10.2.1]{CGP}, $G$ is the direct product $G_1 \times G_2$ of a generalised standard pseudo-reductive $k$-group $G_1$ and a totally non-reduced pseudo-reductive $k$-group $G_2$.
Moreover, the $k$-group $G_2$ is always trivial when $p \neq 2$.

From now on, assume that $G_2$ is not trivial (hence $\operatorname{char}(k) = 2$) and let $S_1 = S$ and $Q_1=Q$.
By \cite[9.9.4]{CGP}, the topological group $H(k)$, deduced from a basic non-reduced pseudo-simple $k$-group $H$ (see definition \cite[10.1.2]{CGP}) is topologically isomorphic to $\mathrm{Sp}_{2n}(K)$ for some $n$ and an extension of local fields $K/k$.
By \cite[10.1.4]{CGP}, the totally non-reduced $k$-group $G_2$ is isomorphic to a Weil restriction $R_{k'/k}(G_2')$ where $k'$ is a nonzero finite reduced $k$-algebra and fibers of $G_2'$ are basic non-reduced pseudo-simple $k$-groups.
Thus set $S_2 = G_2(k)$ and $Q_2=\operatorname{Rad}(G_2(k))$.
Then $S_1 \times S_2$ and $Q_1 \times Q_2$ satisfy the conditions.
\qed
\end{proof}

From this, we deduce an analogous statement for any open subgroup of rational points of a connected quasi-reductive group (see Theorem \ref{thm:structure:quasi:reductive}). The only difference is that $Q$ might a priori differ from the locally elliptic radical of $G(k)$.

\begin{Rq}
Now, assume that $G$ is a connected quasi-reductive $k$-group and consider the following exact sequence
\begin{equation*}
1 \longrightarrow U \longrightarrow G \stackrel{\pi}{\longrightarrow} G_{\mathrm{p-red}} \longrightarrow 1
\end{equation*}
where $U = \mathcal{R}_{u,k}(G)$ is the unipotent $k$-radical of $G$ and $G_{\mathrm{p-red}} = G / U$ is pseudo-reductive.
If this sequence is split, then one can find a closed $k$-subgroup $L$ of $G$ isomorphic to $G_{\mathrm{p-red}}$ such that $G$ is isomorphic to $U \rtimes L$.
Then, because $U(k)$ is compact by \cite[VI.1]{Oesterle}, one can easily check that $\operatorname{Rad}(G(k)) = U(k) \rtimes \operatorname{Rad}(L(k))$.
Let $Q,S$ be the closed normal subgroups of $L(k)$ as in Proposition \ref{prop:structure:pseudo:reductive}.
Then, by considering $\mathbf{Q} = \operatorname{Rad}(G(k)) = U(k) \rtimes Q$ and $\mathbf{S} = U(k) \rtimes S$, one has $G(k) / \mathbf{S} \simeq L(k) / S$ and $\mathbf{S} / \mathbf{Q} \simeq S / Q$ so that the statement the Theorem extends immediately to such a group $G$.

In general, there is no reason for the existence of such a subgroup $L$. For instance, if the pseudo-reductive quotient is in fact reductive, there are known example in positive characteristic of groups without Levi $k$-subgroups (see \cite[A.6]{CGP}.
In characteristic $0$, one know that there are Levi factors, but in that case $U = 1$ because quasi-reductive groups are in fact reductive.
\end{Rq}

Using the pseudo-reductive quotient, one can provide a generalization to quasi-reductive groups as follows:

\begin{proof}[Proof of Theorem \ref{thm:structure:quasi:reductive}]
The above exact sequence providing the pseudo-reductive quotient of $G$ induces, by Lemma \ref{lem:exact:sequence:cohomology}(b), the following exact sequence of topological groups:
\begin{equation*}
1 \longrightarrow U(k) \longrightarrow G(k) \stackrel{\pi_k}{\longrightarrow} G_{\mathrm{p-red}}(k)
\end{equation*}
where the homomorphism $\pi_k$ is open because $U$ is smooth.
Let $V$ be any open subgroup of $G(k)$.
Let $H$ be the image of $V$ through $\pi_k$ in $G_{\mathrm{p-red}}(k)$ that is an open, therefore closed, subgroup of $G_{\mathrm{p-red}}(k)$.
Because the homomorphism $\pi_k$ is continuous and open, it induces a topological isomorphism $\overline{\pi_k} : V / V \cap U(k) \to H$.

Let $S, Q$ be closed normal subgroup of $G_{\mathrm{p-red}}(k)$ given by Proposition \ref{prop:structure:pseudo:reductive}.
We define the closed subgroup $S_H = S \cap H$.
Since $S$ is a normal subgroup of $G_{\mathrm{p-red}}(k)$ and $H$ is an open subgroup of $G_{\mathrm{p-red}}(k)$, we have an isomorphism of locally compact groups $H S / S \simeq H / S_H$.
Since $G_{\mathrm{p-red}}(k) / S$ is compactly generated and virtually abelian, so is its open subgroup $H S/S$, and hence also is the quotient group $H / S_H$.

Let $\pi_Q : S \to S / Q$ be the quotient morphism.
We claim that $\pi_Q(S_H) \simeq (S\cap H) / (Q \cap H)$ has a compact normal subgroup, such that the corresponding quotient group is the internal direct product of finitely many non-compact, topologically simple, compactly generated locally compact groups that are each isomorphic to the quotient of rational points of some simply connected isotropic simple algebraic group over a local field by its center.
Indeed, we know by Proposition \ref{prop:structure:pseudo:reductive} that the quotient $S/Q$ is an internal direct product of simple groups $T_i = \widetilde{G_i}(k_i) / \mathcal{Z}\left(\widetilde{G_i}(k_i)\right)$ for $1 \leqslant i \leqslant \ell$ of the required form.
Since $S_H$ is an open subgroup of $S$, we have that $\pi_Q(S_H)$ is open in $S/Q = T_1 \times \dots \times T_\ell$.
In particular $\pi_Q(S_H)$ contains a subgroup of the form $U_1 \times \dots \times U_\ell$ where $U_i$ is an open subgroup of $T_i$.
For any $1 \leqslant i \leqslant \ell$, because $\widetilde{G_i}$ is simply connected and $k_i$ is a non-Archimedean local field, the Kneser-Tits problem \cite[§2]{PrasadRaghunathan-KneserTits} says that $\widetilde{G_i}(k_i) = \widetilde{G_i}(k_i)^+$.
By a theorem of Prasad (that the latter attributes to Tits) \cite[Theorem (T)]{Prasad-TitsTheorem}, we know that every proper open subgroup of $T_i$ is compact.
Let $\operatorname{pr}_i : S/Q \to T_i$ be the natural projection morphism on the $i$-th factor $T_i$ of $S/Q$ and $T'_i = \operatorname{pr}_i\left( \pi_Q(S_H) \right)$.
Since $T'_i$ is open in $T_i$, we have that either $T'_i$ is compact or $T'_i = T_i$.
Let $I \subset \{ 1,\dots, \ell\}$ be the set of those indices $i$ such that $T'_i$ is compact, and let $\overline{I}$ be its complement.
Let $T_I = \left( \prod_{i \in I} T_i \right)$ and $T_{\overline{I}} = \left( \prod_{i \in \overline{I}} T_i \right)$ so that $S/Q = T_I \times T_{\overline{I}}$.
Then we have $\prod_{i \in I} U_i \subset T_I \cap \pi_Q(S_H) \subset \prod_{i \in I} T'_i$.
It follows that the intersection $T_I \cap \pi_Q(S_H)$ is a compact normal subgroup of $\pi_Q(S_H)$.
On the other hand, for $i \in \overline{I}$, we know that $T_i \cap \pi_Q(S_H)$ is open in $T_i$, and is a normal subgroup of $\pi_Q(S_H)$.
By applying the canonical projection $\operatorname{pr_i}$, we deduce that $T_i \cap \pi_Q(S_H)$ is an open normal subgroup of $T'_i$.
By definition of $\overline{I}$, we have $T'_i = T_i$ and, because $T_i$ is topologically simple, we deduce that $T_i \cap \pi_Q(S_H) = T_i$.
It follows that $T_{\overline{I}} \subset \pi_Q(S_H)$.

Finally, we define $Q_H$ as the inverse image in $H$ of $\pi_Q(S_H) \cap T_I$ through $\pi_Q$.
Since $Q$ is compact, the quotient morphism $\pi_Q$ is proper.
Since $\pi_Q(S_H) \cap T_I$ is a compact normal subgroup of $\pi_Q(S_H)$, we deduce that $Q_H$ is a compact normal subgroup of $H$.

We claim that $S_H / Q_H$ is isomorphic to $T_{\overline{I}}$.
Indeed, let $\operatorname{pr}_{\overline{I}}$ be the natural projection map $S/Q \to T_{\overline{I}}$ and $f = \operatorname{pr}_{\overline{I}} \circ \pi_Q : S_H \to T_{\overline{I}}$.
Then
\begin{align*}
\ker f &= \pi_Q^{-1}\left( \ker \operatorname{pr}_{\overline{I}} \right) \cap H
=\pi_Q^{-1}\left(T_I\right) \cap H&\\
&=\pi_Q^{-1}\left(  T_I \cap \pi_Q\left(S_H\right)\right) \cap H & \text{ since } T_I \subset \pi_Q\left(S_H\right)\\
&=Q_H & \text{ by definition.}
\end{align*}
Thus the surjective map $f$ induces an isomorphism $S_H / Q_H \simeq T_{\overline{I}}$.

Define $ Q_{V} = V\, \cap\, \pi_k^{-1}\left(Q_H\right)$ and $S_{V} = V \,\cap\, \pi_k^{-1}\left(S_H\right)$ that are closed normal subgroup of $V$.
Because $U(k)$ is compact according to \cite[VI.1]{Oesterle}, and so is $U(k) \cap V$, the topological homomorphism $\pi_k : V \to H$ is proper.
Thus $Q_{V}$ is compact.
By construction, we also have the topological isomorphisms $S_{V} / Q_{V} \simeq S_H / Q_H$ and $V / S_{V} \simeq H / S_H$ that are of the required form.
\qed
\end{proof}

\begin{Rq}
If we only assume that the connected component of $G$ is quasi-reductive, then we would think that $G(k) / S_{V}$ is still virtually abelian.
In fact, since $G^\circ$ is a Zariski-closed normal $k$-subgroup of $G$ of finite index, one get that $G^\circ(k)$ is a closed normal subgroup of $G(k)$.
But, because $S_{V}$ may not be a topologically characteristic subgroup of $G^\circ(k)$, the subgroup $S_{V}$ may not be normal in $G(k)$.
\end{Rq}

\section{Maximal pro-\texorpdfstring{$p$}{p} subgroups of a semisimple group}

In the group of rational points of a non-semisimple $k$-group, the fact that maximal bounded subgroups need not be compact may be an obstruction to the use of profinite group theory.
As an example of bad behaviour of non-semisimple groups, the maximal pro-$p$ subgroup of $\mathbb{G}_m(k)=k^\times$ is not finitely generated when $k = \mathbb{F}_q((t))$.
From now on, we reduce our study to the case of a semisimple $k$-group $G$ and we only consider smooth affine $k$-groups, that we will call algebraic $k$-group.

The conjugacy theorem \ref{thm:conjugaison:maximal:pro-p} is the generalisation to arbitrary characteristic of \cite[Theorem 3.10]{PlatonovRapinchuk}, which Platonov and Rapinchuk prove in characteristic $0$ and attribute to Matsumoto.
The proof is given in part \ref{subsection:conjugacy:thm}, using Bruhat-Tits buildings instead of maximal orders.

Furthermore, as we obtained a description of maximal profinite subgroups of $G(k)$ in Proposition \ref{prop:description:compact:maximal}, Theorem \ref{thm:description:maximal:pro-p} establishes an analogous description of maximal pro-$p$ subgroups.
It is proven in part \ref{subsection:description:building}.
In practice, the description by integral models established in Theorem \ref{thm:description:models:pro-p} is more convenient;
it is proven in part \ref{subsection:description:integral:models}.

\subsection{Proof of the conjugacy theorem}
\label{subsection:conjugacy:thm}

Let us first investigate the case of an algebraic group defined over a finite field.
This case corresponds to special fibers of integral $\mathcal{O}_k$-models (these models are useful in order to make a description of profinite subgroups).

\begin{Lem}
\label{lem:Sylow:over:finite:field}
Let $k$ be a finite field of characteristic $p$.
Let $H$ be a connected algebraic $k$-group.
The $p$-Sylow subgroups of the finite group $H(k)$ are exactly the groups $B_u(k)$
where $B$ is a Borel subgroup\footnote{By a theorem due to Lang \cite[16.6]{Borel}, we know that a linear algebraic group $H$ defined over a finite field $\kappa$ admits Borel subgroups themselves defined over $\kappa$.} of $H$ defined over $k$ and $B_u$ is the unipotent radical of $B$.

Moreover, the normalizer in $G$ of a $p$-Sylow subgroup is a Borel subgroup of $G$ and the map $B \mapsto B_u(k)$ is a bijection between the set of Borel $k$-subgroups of $H$ and the set of $p$-Sylow subgroups of $H(k)$.
\end{Lem}

\begin{proof}
Denote by $q$ the cardinal of $k$.
Let $P$ be a $p$-Sylow subgroup of $H(k)$.
Let $g \in P$ and $g = g_s \cdot g_u$ the Jordan decomposition of $g$.
Since $H$ is affine, there exists an integer $n \in \NN^*$ and a faithful linear representation $\rho : H \hookrightarrow GL_{n,k}$ \cite[5.1]{Borel} such that
$\rho(g_s) = \rho(g)_s$.
Hence, the order of this element divides $(q - 1)^n$, so it is prime to $p$.
As a consequence $g = g_u$.
Hence $P$ consists in unipotent elements of $H(k)$.
Since $k$ is perfect and $H$ is connected, by \cite[3.7]{BorelTits-unipotent}, there exists a Borel $k$-subgroup $B$ such that $P$ is contained in the group of rational points of the unipotent radical of $B$, denoted by $B_u(k)$.
Since $k$ is perfect, $B_u$ is $k$-split \cite[1.1.11]{BruhatTits2}.
Hence, $B_u(k)$ is a $p$-group.
Since $P$ is a $p$-Sylow subgroup of $H(k)$, we have $P = B_u(k)$ by maximality.

Since the Borel subgroups are $H(k)$-conjugate \cite[16.6]{Borel}, and since the $p$-Sylow subgroups of the finite group $H(k)$ are $H(k)$-conjugate, we obtain a surjective map $\Psi : B \mapsto B_u(k)$ between Borel $k$-subgroups of $H$ and $p$-Sylow subgroups of $H(k)$.
Let us show that it is a bijective map.

Fix $B$ a Borel $k$-subgroup of $H$ and $S$ a maximal $k$-split torus of $B$, hence of $H$.
Define $T = \mathcal{Z}_{H}(S)$, it is a maximal torus of $H$ defined over $k$ since an algebraic group over a finite field is quasi-split.
Since $k$ is perfect, the unipotent radical of $B$ is $k$-split.
The $k$-group $B$ has a Levi decomposition $B = T \cdot B_u$ \cite[C.2.4]{CGP}.

On the one hand, since $H(k)$ acts by conjugation on the set of Borel $k$-subgroups of $H$, the number of Borel $k$-subgroups is equal to the cardinal of $H(k) / \mathcal{N}_{H(k)}(B)$.
By a theorem due to Chevalley \cite[11.16]{Borel}, a Borel subgroup of $H$ is equal to its normalizer, hence $\mathcal{N}_{H(k)}(B) = B(k)$.
On the other hand, since $H(k)$ acts by conjugation on the set of its $p$-Sylow subgroups, the number of its $p$-Sylow subgroups is equal to the cardinal of $H(k) / \mathcal{N}_{H(k)}(B_u(k))$.

Hence, it suffices to show $\mathcal{N}_{H(k)}(B_u(k)) = B(k)$.
Denote by $N = \mathcal{N}_H(S)$ the normalizer of $S$ in $H$.
Since $N$ normalises $T$, we get that $N(k)$ normalises $T(k)$.
Since $B(k) = T(k) B_u(k) = B_u(k) T(k)$, by \cite[C.2.8]{CGP}, we get $G(k) = B_u(k) N(k) B_u(k)$.
Let $g \in \mathcal{N}_{H(k)}(B_u(k)) \subset H(k)$.
Write $g = unu'$ with ${u,u' \in B_u(k)}$ and $n \in N(k)$.
If $B = n B n^{-1}$, we have $n \in \mathcal{N}_H(B)(k) = B(k)$. Hence $g \in B(k)$.
By contradiction, suppose that $n \not\in T(k)$ and $B \neq n B n^{-1}$.
Thus the Weyl group ${_k}W = N(k) / T(k)$ is not trivial,
hence the group $H$ is not solvable and admits opposite root subgroups \cite[7.1.3, 7.1.5 and 7.2]{Springer}, which are $k$-split since $k$ is perfect \cite[15.5 (ii)]{Borel}.
Since $B$ and $nBn^{-1}$ are non equal Borel subgroup with the same maximal torus $T$, there exist a root $\alpha \in \Phi(B,T)=$ such that $n \cdot \alpha \not\in \Phi(B,T)$.
Let $v \in U_{\alpha}(k)$ be a non-trivial element. Since $U_{n \cdot \alpha} \cap B = \{1\}$, we have $n^{-1} v n \not\in B(k)$.
This contradicts $n = u^{-1} g {u'}^{-1} \in \mathcal{N}_{H(k)}(B_u(k))$.
Hence $\mathcal{N}_{H(k)}(B_u(k)) \subset B(k)$.

Moreover, since the $k$-group $\mathcal{N}_{H}(B_u(k))$ contains $B$, it is a parabolic subgroup of $H$. Since it does not contain opposite root subgroups, it is a minimal parabolic subgroup, hence a Borel subgroup of $H$.

As a consequence, the equality $\mathcal{N}_{H(k)}(B_u(k)) = \mathcal{N}_{H(k)}(B) = B(k)$ completes the proof.\qed
\end{proof}

\begin{Rq}
The bijective correspondence between Borel $k$-subgroups of $H$ and $p$-Sylow subgroups of $H(k)$ is useless in what follows.
We only need to know that the number of Borel $k$-subgroups is prime to $p$ (that is also a consequence of Bruhat decomposition).

Over a local field instead of a finite field, the fact that the normalizer of a $p$-Sylow subgroup of $H(k)$ is exactly $B(k)$ will be generalised by Proposition \ref{prop:normalizer:maximal:pro-p} with a simple connectedness assumption: normalizers of a maximal pro-$p$ subgroups are exactly Iwahori subgroups.
\end{Rq}

When a $p$-group acts on a finite set of cardinal prime to $p$, orbit-stabilizer theorem gives the existence of a fixed point. This statement can be generalised to the action of a pro-$p$ group.

\begin{Lem}\label{lem:action:finite:pro-p}
Let $p$ be a prime and $X$ a finite set of cardinal prime to $p$.
If $G$ is a pro-$p$ group acting continuously on $X$,
then $G$ fixes an element of $X$. 
\end{Lem}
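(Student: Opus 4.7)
The plan is to reduce the action of the pro-$p$ group $G$ on the finite set $X$ to the action of a finite $p$-group quotient, and then invoke the classical orbit-counting argument.

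First, I would observe that since $X$ is a finite discrete set and $G$ acts continuously, the stabilizer $G_x = \{g \in G : g \cdot x = x\}$ is an open subgroup of $G$ for every $x \in X$ (it is the preimage of the open point $\{x\}$ under the continuous orbit map $g \mapsto g \cdot x$). Next, I would form
\begin{equation*}
N = \bigcap_{x \in X} G_x,
\end{equation*}
which is a finite intersection of open subgroups and hence itself an open subgroup of $G$. Since the set $X$ is $G$-stable and $N$ acts trivially on $X$, the subgroup $N$ is normal in $G$, and the quotient $G/N$ embeds into the finite symmetric group on $X$.

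The key observation is then that $G/N$ is a finite $p$-group: any continuous homomorphism from a pro-$p$ group to a finite discrete group factors through a finite $p$-group quotient, so the finite quotient $G/N$ must be a $p$-group. At this point the problem becomes purely combinatorial: the finite $p$-group $G/N$ acts on the finite set $X$, and each orbit has cardinality dividing $|G/N|$, hence is a power of $p$. Writing $X$ as a disjoint union of orbits and reducing modulo $p$, the hypothesis $\gcd(|X|, p) = 1$ forces the existence of at least one orbit of cardinality $1$, that is, a point fixed by $G/N$, and hence by $G$.

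I do not foresee a real obstacle in this proof; the only subtlety worth making explicit is the statement that a continuous homomorphism from a pro-$p$ group to a finite discrete group has image a $p$-group, which follows either from the universal property of the pro-$p$ completion or from the fact that an open subgroup of a pro-$p$ group has index a power of $p$ (applied to the kernel of such a homomorphism).
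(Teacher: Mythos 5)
Your proof is correct and follows essentially the same route as the paper: pass to the open normal subgroup $N=\bigcap_{x\in X}G_x$ acting trivially, note that the finite quotient $G/N$ is a $p$-group, and conclude by the orbit-counting argument. You merely spell out in more detail why $G/N$ is a $p$-group and why a $p$-power orbit decomposition forces a fixed point, which the paper leaves implicit.
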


\begin{proof}
For all $x \in X$, denote by $G_x$ the stabilizer of $x$.
Since $X$ is finite, $G_x$ is open.
Let $H = G_X = \bigcap_{x \in X} G_x$ be the subgroup of $G$ fixing $X$ pointwise.
Then $H$ is a normal open subgroup of $G$.
Hence $G/H$ is a $p$-group acting on $X$.
By the orbit-stabilizer theorem, $G/H$ fixes an element $x \in X$.
Hence $G$ fixes $x$.\qed
\end{proof}

Since a profinite subgroup is compact, by the Bruhat-Tits fixed point theorem, such a subgroup of $G(k)$ fixes a point $x_0 \in X(G,k)$.
Since the action of $G(k)$ preserves the structure of the simplicial complex, we get an action on the star of $x_0$, that means an action on the set of facets whose closure contains $x_0$.
Showing that the subset of alcoves of this set is a finite set of cardinal prime to $p$, we will get the following:

\begin{Prop}\label{prop:pro-p:stabilises:alcove}
A pro-$p$ subgroup of $G(k)$ setwise stabilises an alcove of $X(G,k)$.
\end{Prop}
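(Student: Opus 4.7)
The plan is to combine the Bruhat-Tits fixed point theorem with the finite orbit lemma \ref{lem:action:finite:pro-p}, the key point being to exhibit a finite set of cardinality prime to $p$ on which the given pro-$p$ subgroup acts.

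Let $P$ be a pro-$p$ subgroup of $G(k)$. Since $P$ is profinite it is compact, hence bounded, so by Corollary \ref{cor:bounded:fixed:point} it fixes some point $x_0 \in X(G,k)$. Let $\mathrm{St}(x_0)$ denote the \emph{star} of $x_0$, that is, the set of alcoves $\mathbf{c}$ of $X(G,k)$ whose closure contains $x_0$. Because $X(G,k)$ is locally finite, $\mathrm{St}(x_0)$ is a finite set. As $G(k)$ acts on $X(G,k)$ by simplicial isometries and $P$ fixes $x_0$, the group $P$ acts on $\mathrm{St}(x_0)$ through a continuous action (the action factors through the finite quotient $P / P_{x_0,\mathrm{St}}$ where $P_{x_0,\mathrm{St}}$ is the open stabilizer of all alcoves around $x_0$). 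By Lemma \ref{lem:action:finite:pro-p}, it will suffice to prove that $\#\mathrm{St}(x_0)$ is prime to $p$.

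To compute $\#\mathrm{St}(x_0)$, I would use the integral model $\mathfrak{G}_{x_0}$ associated with $x_0$. The special fibre $\overline{\mathfrak{G}}_{x_0}$ is a smooth connected affine $\kappa$-group whose reductive quotient $\overline{\mathfrak{G}}_{x_0}^{\mathrm{red}}$ is a connected reductive $\kappa$-group. The star $\mathrm{St}(x_0)$ is canonically in bijection with the set of chambers of the spherical Tits building of $\overline{\mathfrak{G}}_{x_0}^{\mathrm{red}}$ over the residue field $\kappa$; equivalently, with the set of Borel $\kappa$-subgroups of $\overline{\mathfrak{G}}_{x_0}^{\mathrm{red}}$. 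Writing $H = \overline{\mathfrak{G}}_{x_0}^{\mathrm{red}}$ and picking any Borel $\kappa$-subgroup $B$ (which exists by Lang's theorem), the set of Borel $\kappa$-subgroups is $H(\kappa)/B(\kappa)$, so
\begin{equation*}
\#\mathrm{St}(x_0) \;=\; \frac{\#H(\kappa)}{\#B(\kappa)}.
\end{equation*}
Using the Levi decomposition $B = T \cdot B_u$ with $B_u$ being $\kappa$-split unipotent (Lemma \ref{lem:Sylow:over:finite:field}), one gets $\#B(\kappa) = \#T(\kappa) \cdot q^{\dim B_u}$, where $q = \#\kappa$ is a power of $p$; and the factor $q^{\dim B_u}$ accounts for the full $p$-part of $\#H(\kappa)$, since $B_u(\kappa)$ is a $p$-Sylow subgroup of $H(\kappa)$ by Lemma \ref{lem:Sylow:over:finite:field}. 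Therefore the quotient $\#H(\kappa)/\#B(\kappa)$ is prime to $p$, as required.

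The main obstacle is the identification of $\mathrm{St}(x_0)$ with the chambers of the spherical building of $\overline{\mathfrak{G}}_{x_0}^{\mathrm{red}}$; this is essentially the link between the Bruhat-Tits integral model and the local structure of the building and relies on the classical Bruhat-Tits theory of the excerpt. Once this identification is granted, Lemma \ref{lem:Sylow:over:finite:field} does the counting and Lemma \ref{lem:action:finite:pro-p} applied to the action of $P$ on $\mathrm{St}(x_0)$ yields an alcove $\mathbf{c} \in \mathrm{St}(x_0)$ setwise stabilised by $P$.
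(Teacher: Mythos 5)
Your proposal is correct and follows essentially the same route as the paper: fix a point via the Bruhat--Tits fixed point theorem, identify the alcoves in the star of that point (or of its facet) with the Borel $\kappa$-subgroups of the reductive quotient of the special fibre of the associated integral model, invoke Lemma \ref{lem:Sylow:over:finite:field} to see that this set has cardinality prime to $p$, and conclude with Lemma \ref{lem:action:finite:pro-p}. The only cosmetic difference is that you count $[H(\kappa):B(\kappa)]$ directly via the $p$-part of $\#B(\kappa)$, whereas the paper passes through the bijection with $p$-Sylow subgroups and the Sylow congruence $\equiv 1 \pmod p$; both rest on the same lemma.
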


\begin{proof}
Let $U$ be a pro-$p$ subgroup of $G(k)$.
By Proposition \ref{prop:description:compact:maximal}, there exists a point $y \in X(G,k)$ such that $\mathrm{Stab}_{G(k)}(y)$ is a maximal compact subgroup of $G(k)$ containing $U$.
Consider the (non-empty) set $\mathcal{C}_y$ of alcoves of $X(G,k)$ whose closure contains $y$.
Be careful that we forget the Euclidean structure provided by $X(G,k)$ and we only look at $\mathcal{C}_y$ as a discrete set.

Denote by $F$ the facet\footnote{Here, we consider the definition in which the facets form a partition of the building $X(G,k)$, so that any point is contained in a unique facet. Alternatively, we could consider that facets are closed polysimplices and then $F$ would be defined as the smallest facet of $X(G,k)$ containing $y$.} of $X(G,k)$ containing $y$.
By conjugation, assume that $F \subset \mathbb{A}$.
Define the star of $F$, denoted by $X(G,k)_F$, as the set of facets $F'$ of $X(G,k)$ such that $F \subset \overline{F'}$.
We endow this set with the partial order $F' \leq F'' \Leftrightarrow F' \subset \overline{F''}$.
Denote by $\mathfrak{G}_F$ the connected integral model of $G$ associated to $F$ (see definition in chapters \cite[4.6 and 5.1]{BruhatTits2}).
Denote by $\kappa$ the residue field of $k$ and consider $\mathcal{P}_F$ the set of $\kappa$-parabolic subgroups of $\overline{\mathfrak{G}_F}$ ordered by the inverse of the inclusion.
There is an isomorphism of ordered sets between $X(G)_F$ and $\mathcal{P}_F$ \cite[4.6.32 et 5.1.32 (i)]{BruhatTits2}
such that maximal simplices of $X(G)_F$ are exactly the elements of $\mathcal{C}_y$, and the minimal parabolic $\kappa$-subgroups of $\overline{\mathfrak{G}_F}$ correspond to them bijectively.
By Lang's theorem \cite[16.6]{Borel}, the minimal parabolic $\kappa$-subgroups of  $\overline{\mathfrak{G}_F}$ are exactly its Borel $\kappa$-subgroups.
By Lemma \ref{lem:Sylow:over:finite:field}, we obtain a bijection between $\mathcal{C}_y$ and the set of $p$-Sylow subgroups of $\mathfrak{G}_F(\kappa)$.

Since $G(k)$ preserves the poly-simplicial structure of $X(G,k)$ and $U$ fixes $y$, the group $U$ acts on $\mathcal{C}_y$.
For all $\mathbf{c},\mathbf{c'} \in \mathcal{C}_y$, by continuity of the action $G(k) \times X(G,k) \rightarrow X(G,k)$, the subset $\{g \in U\,,\,g \cdot \overline{\mathbf{c}} = \overline{\mathbf{c'}} \}$ is closed in $U$.
As a consequence, $U$ acts continuously on the finite set $\mathcal{C}_y$, whose cardinal is congruent to $1$ modulo $p$.
By Lemma \ref{lem:action:finite:pro-p}, $U$ fixes an alcove $\mathbf{c} \in \mathcal{C}_y$, hence $U$ setwise stabilises it in $X(G,k)$.\qed
\end{proof}

We provide an example of a pro-$p$ subgroup that stabilizes setwise an alcove but not pointwise.

\begin{Ex}\label{ex:adjoint:pro-p:group}
Let $p=3$ and $G = \mathrm{PGL}_{3,\mathbb{Q}_3}$.
The affine building of $G$ is of type $\widetilde{A_2}$.
Take the maximal torus $S=T=\left\{\left[\begin{matrix}x &0 &0\\0&y&0\\0&0&1\end{matrix}\right]\ ,\ x, y \in \mathbb{G}_{m,\mathbb{Q}_3}\right\}$ of $G$.
The element $t= \left[\begin{matrix}3 &0 &0\\0&1&0\\0&0&1\end{matrix}\right] \in T$ acts on $T$ by a translation \gts{of step $\frac{2}{3} (2 \alpha^\vee + \beta^\vee)$} where $\alpha, \beta$ are the simple roots :
indeed, it is the class of the element $\begin{pmatrix}3^{2/3}&0&0\\0&3^{-1/3}&0\\0&0&3^{-1/3}\end{pmatrix} \in \mathrm{SL}_{3,\mathbb{Q}_3(\sqrt[3]{3})}$ over the field $\mathbb Q_3(\sqrt[3]{3})$, that is a translation of step $1$ permuting cyclicly the types of vertices.
Let $x$ be a vertex. There exists an alcove $\mathbf{c}$ containing both $x$ and $t \cdot x$. There are two elements $n_1, n_2 \in N = \mathcal{N}_G(T)(\mathbb{Q}_3)$ both acting on the standard apartment as reflections so that $t \cdot \mathbf{c} = n_2 n_1 \cdot \mathbf{c}$. One get an element $g = n_1^{-1} n_2^{-1} t \in N(\mathbb{Q}_3)$ that acts on the standard apartment as the rotation of center the circumcenter $y$ of $\mathbf{c}$ and of order $3$. Thus, the subgroup of $\mathrm{PGL}_{3}(\mathbb{Q}_3)$ generated by $g$ and the maximal pro-$p$ subgroup of the stabilizer of $y$ is pro-$p$ and stabilizes exactly the alcove $\mathbf{c}$, but it does not fix it.

Note that a similar construction is also possible with $\mathrm{PGL}_2(\mathbb Q_2)$.
\end{Ex}

We now can give a proof of conjugation of maximal pro-$p$ subgroup theorem.

\begin{proof}[Proof of Theorem \ref{thm:conjugaison:maximal:pro-p}]
Let $U, U'$ be two maximal pro-$p$ subgroups of $G(k)$.
Let $\mathbf{c}, \mathbf{c}'$ be alcoves stabilized by the action of $U$ and $U'$ respectively (they exist by Proposition \ref{prop:pro-p:stabilises:alcove}).
Since $G(k)$ acts transitively on the set of alcoves of $X(G,k)$, there exists an element $g \in G(k)$ such that $g \cdot \mathbf{c}' = \mathbf{c}$.
Hence $g U' g^{-1}$ stabilises $\mathbf{c}$.
As a consequence, $U$ and $g U' g^{-1}$ are two maximal pro-$p$ subgroups of ${P = \mathrm{Stab}_{G(k)}(\mathbf{c})}$ which is compact by Lemma \ref{lem:parahoric:compact}(2).
Hence, $U$ and $gU'g^{-1}$ are two $p$-Sylow subgroups of the profinite group $P$. Since any two $p$-Sylow subgroups of a profinite group are conjugate \cite[1.4 Prop. 3]{SerreCohomologieGaloisienne}, $U$ and $g U' g^{-1}$ are conjugate in $P$,
so $U$ and $U'$ are conjugate in $G(k)$.\qed
\end{proof}

We now need to use root groups and integral models to prove the uniqueness of the alcove setwise stabilized by a given maximal pro-$p$ subgroup.
Theorem \ref{thm:description:maximal:pro-p} will be proven in part \ref{subsection:description:building}.

\subsection{Integral models} \label{subsection:description:integral:models}

In the proof of Proposition \ref{prop:pro-p:stabilises:alcove}, integral models were used; here, we will make a more systematic use of them.

Let $\Omega$ a non-empty bounded subset of the standard apartment $\mathbb{A}$.
Denote by $\pi_\kappa : \mathfrak{G}_\Omega^\dagger(\mathcal{O}_k) \rightarrow \mathfrak{G}_\Omega^\dagger(\kappa)$ the canonical reduction map.
Denote by $\overline{\mathfrak{G}}_\Omega = \left( \mathfrak{G}^\dagger_\Omega \right)_\kappa$ the special fiber.
Denote by $\left( \overline{\mathfrak{G}}_\Omega \right)^\circ$ the identity component of the $\kappa$-group $\overline{\mathfrak{G}}_\Omega$, and by $R_u(\overline{\mathfrak{G}}_\Omega^\circ)$ its unipotent radical,
defined over $\kappa$ because $\kappa$ is perfect \cite[0.7]{BorelTits}.
Denote by $\overline{\mathfrak{G}}_\Omega^{red} = \overline{\mathfrak{G}}_\Omega / R_u(\overline{\mathfrak{G}}_\Omega)$ the quotient $\kappa$-group (possibly non-connected since $\overline{\mathfrak{G}}_\Omega$ may be not connected).
The root system of its identity component is the set $\Phi_\Omega$ of roots $a \in \Phi$, where $\Phi$ denotes the relative root system of $G$, such that the root $a$ seen as an affine map is constant over $\Omega$ and has values in the set $\Gamma'_a$ \cite[10.36]{Landvogt}.
Note that, when $\Omega$ contains an alcove, no root of $\Phi$ is constant on $\Omega$ since an alcove of $\mathbb{A}$ is open in $\mathbb{A}$, hence $\Phi_\Omega$ is empty.

Denote by $\pi_q : \overline{\mathfrak{G}}_\Omega \rightarrow \overline{\mathfrak{G}}_\Omega^{red}$ the quotient $\kappa$-morphism of algebraic $\kappa$-groups,
and, by notation abuse, $\pi_q : \mathfrak{G}_\Omega^\dagger(\kappa) \rightarrow \overline{\mathfrak{G}}_\Omega^{red}(\kappa)$ the homomorphism of abstract groups deduced from $\pi_q$.
It will be clear from the context which of these two morphisms will be considered.

\begin{Not}
Identifying the abstract groups $\mathfrak{G}_\Omega^\dagger(\kappa) = \overline{\mathfrak{G}}_\Omega(\kappa)$, we can define the composite morphism $\pi_\Omega = \pi_q \circ \pi_\kappa$.
Denote by $P^+_\Omega$ the kernel of $\pi_\Omega$.

More specifically, if $F$ is a facet of the building $X(G,k)$, by transitivity, there exists an element $g \in G(k)$ such that $g \cdot F \subset \mathbb{A}$.
Denote $P^+_F = g^{-1} P^+_{g \cdot F} g$.
This group does not depend on the choice of such a $g$.
\end{Not}

The goal is to show that, when $G$ is simply connected, $P^+_F$ is a maximal pro-$p$ subgroup of the profinite (by Lemma \ref{lem:parahoric:compact}(2)) subgroup $\mathrm{Stab}_{G(k)}(F)$.
Note that with this notation, it is not required that the facet $F$ be contained in the standard apartment $\mathbb{A}$.

\begin{Lem}\label{lem:canonical:morphism}
The morphism $\pi_\kappa$ is a surjective group homomorphism and its kernel $\ker \pi_\kappa$ is a pro-$p$ group.
\end{Lem}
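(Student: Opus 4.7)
The plan is to establish the two claims separately: first that $\pi_\kappa$ is a surjective group homomorphism, then that its kernel is pro-$p$.

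\emph{Group homomorphism and surjectivity.} That $\pi_\kappa$ is a group homomorphism is tautological from functoriality: it is obtained by applying the $\mathcal{O}_k$-group scheme functor $\mathfrak{G}_\Omega^\dagger$ to the morphism $\mathrm{Spec}(\kappa) \to \mathrm{Spec}(\mathcal{O}_k)$ associated with the quotient $\mathcal{O}_k \twoheadrightarrow \kappa$. For surjectivity, recall that $\mathfrak{G}_\Omega^\dagger$ is a smooth $\mathcal{O}_k$-scheme (this is built into its definition in \cite[4.6.18, 5.1.8]{BruhatTits2}) and that $\mathcal{O}_k$ is a complete, hence henselian, local ring. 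Thus by the infinitesimal lifting criterion for smoothness (equivalently, Hensel's lemma applied to the étale local structure of $\mathfrak{G}_\Omega^\dagger$ near any given $\kappa$-point), every $\bar{g} \in \mathfrak{G}_\Omega^\dagger(\kappa)$ lifts to an $\mathcal{O}_k$-point, which gives surjectivity of $\pi_\kappa$.

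\emph{Kernel is pro-$p$.} My approach is to reduce to the analogous statement for $\mathrm{GL}_N$, which was already proved in the course of Lemma \ref{lem:existence:open:pro-p}. Since $\mathfrak{G}_\Omega^\dagger$ is a smooth affine $\mathcal{O}_k$-group scheme of finite type and $\mathcal{O}_k$ is a Dedekind domain, there exists a closed $\mathcal{O}_k$-immersion $\mathfrak{G}_\Omega^\dagger \hookrightarrow \mathrm{GL}_{N,\mathcal{O}_k}$ for some $N$ (a classical fact, analogous to \cite[II.5.5.2]{DemazureGabriel} over a field but over the base $\mathcal{O}_k$). Passing to $\mathcal{O}_k$-points, this realises $\mathfrak{G}_\Omega^\dagger(\mathcal{O}_k)$ as a closed subgroup of $\mathrm{GL}_N(\mathcal{O}_k)$, and identifies $\ker \pi_\kappa$ with a closed subgroup of the principal congruence subgroup $H_1 = \ker\bigl(\mathrm{GL}_N(\mathcal{O}_k) \to \mathrm{GL}_N(\kappa)\bigr)$. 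The group $H_1$ was shown to be pro-$p$ in the proof of Lemma \ref{lem:existence:open:pro-p}. Since a closed subgroup of a pro-$p$ group is pro-$p$, this concludes the argument.

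\emph{Main obstacle.} The only non-routine ingredient is the existence of a closed $\mathcal{O}_k$-immersion into $\mathrm{GL}_{N,\mathcal{O}_k}$; the field-level version is cited earlier in the paper, but its extension to the Dedekind base $\mathcal{O}_k$ needs a separate (standard) reference. If one preferred to avoid this, an alternative would be to filter $\ker \pi_\kappa$ by the congruence subgroups $H_n = \ker\bigl(\mathfrak{G}_\Omega^\dagger(\mathcal{O}_k) \to \mathfrak{G}_\Omega^\dagger(\mathcal{O}_k/\mathfrak{m}^n)\bigr)$ and to verify directly, using smoothness of $\mathfrak{G}_\Omega^\dagger$ and the resulting description of the group law in formal coordinates near the identity section, that each successive quotient $H_n/H_{n+1}$ is isomorphic to a finite-dimensional $\kappa$-vector space (of dimension $\dim \mathfrak{G}_\Omega^\dagger$), hence a finite elementary abelian $p$-group; compactness of $\mathfrak{G}_\Omega^\dagger(\mathcal{O}_k)$ then gives $\ker \pi_\kappa = \varprojlim_n H_1/H_n$ as a pro-$p$ group.
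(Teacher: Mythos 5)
Your proposal is correct and follows essentially the same route as the paper: surjectivity via smoothness of $\mathfrak{G}_\Omega^\dagger$ over the henselian ring $\mathcal{O}_k$ (the paper cites \cite[2.3 Prop.~5]{NeronModels}), and the pro-$p$ property of the kernel by realising $\mathfrak{G}_\Omega^\dagger$ as a closed $\mathcal{O}_k$-subgroup scheme of $\mathrm{GL}_{N,\mathcal{O}_k}$ so that $\ker\pi_\kappa$ becomes a closed subgroup of the principal congruence subgroup handled in Lemma~\ref{lem:existence:open:pro-p}. The paper likewise invokes the existence of a faithful linear representation over $\mathcal{O}_k$ without further justification, so your flagged \emph{obstacle} is no worse than what the paper itself assumes.
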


\begin{proof}
Surjectivity of $\pi_\kappa$ is a consequence of smoothness of the $\mathcal{O}_k$-model $\mathfrak{G}_\Omega^\dagger$ \cite[2.3 Prop. 5]{NeronModels}.

The smooth affine $\mathcal{O}_k$-group of finite type $\mathfrak{G}_\Omega^\dagger$ has a faithful linear representation, that means a closed immersion, $\rho : \mathfrak{G}_\Omega^\dagger \rightarrow \mathcal{GL}_{n,\mathcal{O}_k}$ for which it corresponds a surjective Hopf $\mathcal{O}_k$-algebras homomorphism $\varphi : A \twoheadrightarrow B$ where $A$ and $B$ denote respectively the $\mathcal{O}_k$-Hopf algebras of $\mathcal{GL}_{n,\mathcal{O}_k}$ and $\mathfrak{G}_\Omega^\dagger$.
Denote by $\widetilde{\pi_\kappa} : \mathcal{GL}_{n,\mathcal{O}_k}(\mathcal{O}_k) \rightarrow \mathcal{GL}_{n,\mathcal{O}_k}(\kappa)$ the canonical surjective homomorphism (defined as $\pi_\kappa$ above).
Hence $\ker \pi_\kappa = \{ f : B \rightarrow \mathcal{O}_k , f \otimes 1 = \varepsilon \otimes 1 \}$ and
$\ker \widetilde{\pi_\kappa} = \{ f : A \rightarrow \mathcal{O}_k , f \otimes 1 = \widetilde{\varepsilon} \otimes 1 \}$
where $\varepsilon$ (resp. $\widetilde{\varepsilon}$) is the co-unit of $B$ (resp. A).

On $\mathcal{O}_k$ points, we have $\ker \widetilde{\pi_\kappa} = \mathrm{GL}_n(\mathfrak{m})$, according to the notation of the proof of Lemma \ref{lem:existence:open:pro-p}.
Since $\widetilde{\varepsilon} = \varphi^* \varepsilon$,
we have the following commutative diagram:
\begin{equation*}
  \xymatrix{
    0 \ar[r] &
    \ker \pi_\kappa \ar[r]^{\subset} \ar@{^{(}-->}[d] &
    \mathfrak{G}_\Omega^\dagger(\mathcal{O}_k) \ar@{->>}[r]^{\pi_\kappa} \ar@{^{(}->}[d]^{\rho_{\mathcal{O}_k}} &
    \mathfrak{G}_\Omega^\dagger(\kappa) \ar[r] \ar@{^{(}->}[d]^{\rho_{\kappa}} &
    1 \\
    0 \ar[r] &
    \ker \widetilde{\pi_\kappa} \ar[r]^{\subset} &
    \mathrm{GL}_n(\mathcal{O}_k) \ar@{->>}[r]^{\widetilde{\pi_\kappa}} &
    \mathrm{GL}_n(\kappa) \ar[r] &
    1
  }
\end{equation*}

Hence $\ker \pi_\kappa$ is isomorphic to a closed subgroup of $\ker \widetilde{\pi_\kappa}$, so it is a pro-$p$ group.\qed
\end{proof}

\begin{Prop}
\label{prop:kernel:pro-p}
The group $P^+_\Omega$ is a normal pro-$p$ subgroup of $\mathfrak{G}_\Omega^\dagger(\mathcal{O}_k)$.
\end{Prop}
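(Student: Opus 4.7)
The plan is to factor $\pi_\Omega = \pi_q \circ \pi_\kappa$ and analyse $P^+_\Omega = \ker \pi_\Omega$ through the intermediate group $\mathfrak{G}_\Omega^\dagger(\kappa)$. Normality is immediate: $P^+_\Omega$ is the kernel of a group homomorphism defined on $\mathfrak{G}_\Omega^\dagger(\mathcal{O}_k)$, so it is automatically a normal subgroup.

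For the pro-$p$ property, the idea is to exhibit a short exact sequence whose two extremities are both pro-$p$. Writing $P^+_\Omega = \pi_\kappa^{-1}(\ker \pi_q)$ and restricting $\pi_\kappa$ to $P^+_\Omega$, the surjectivity of $\pi_\kappa$ from Lemma \ref{lem:canonical:morphism} yields a short exact sequence of topological groups
\begin{equation*}
1 \to \ker \pi_\kappa \to P^+_\Omega \to \ker\bigl(\pi_q : \mathfrak{G}_\Omega^\dagger(\kappa) \to \overline{\mathfrak{G}}_\Omega^{red}(\kappa)\bigr) \to 1.
\end{equation*}
The left term is pro-$p$ by Lemma \ref{lem:canonical:morphism}, and $P^+_\Omega$ is closed in the profinite group $\mathfrak{G}_\Omega^\dagger(\mathcal{O}_k)$, hence profinite; it will therefore be pro-$p$ as soon as the right term is shown to be a finite $p$-group.

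To handle the right term, I would identify $\ker \pi_q$ on $\kappa$-points with $R_u(\overline{\mathfrak{G}}_\Omega^\circ)(\kappa)$ by applying left-exactness of the functor of $\kappa$-points to the algebraic short exact sequence $1 \to R_u(\overline{\mathfrak{G}}_\Omega^\circ) \to \overline{\mathfrak{G}}_\Omega \to \overline{\mathfrak{G}}_\Omega^{red} \to 1$. Since $\kappa$ is finite, hence perfect, the smooth connected unipotent $\kappa$-group $R_u(\overline{\mathfrak{G}}_\Omega^\circ)$ is $\kappa$-split and admits a composition series whose successive quotients are $\kappa$-isomorphic to $\mathbb{G}_{a,\kappa}$. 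Taking $\kappa$-points, $R_u(\overline{\mathfrak{G}}_\Omega^\circ)(\kappa)$ is then an iterated extension of copies of the finite additive $p$-group $(\kappa,+)$, so a finite $p$-group, as required. The step requiring most care is precisely this final identification: the passage from the algebraic object $R_u(\overline{\mathfrak{G}}_\Omega^\circ)$ to the $p$-group statement about its $\kappa$-points rests on $\kappa$-splitness of smooth connected unipotent groups over the perfect residue field, without which one could not control the orders of the rational points.
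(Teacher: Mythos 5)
Your proposal is correct and follows essentially the same route as the paper: both factor $\pi_\Omega = \pi_q \circ \pi_\kappa$, exhibit the short exact sequence $1 \to \ker \pi_\kappa \to P^+_\Omega \to \ker \pi_q \to 1$, identify $\ker \pi_q$ with the $\kappa$-points of the unipotent radical of the special fiber, and conclude that $P^+_\Omega$ is pro-$p$ as an extension of a finite $p$-group by a pro-$p$ group. The only difference is that you spell out why $R_u(\overline{\mathfrak{G}}_\Omega^\circ)(\kappa)$ is a finite $p$-group (splitness over the perfect residue field and a composition series with $\mathbb{G}_a$ quotients), a point the paper simply asserts.
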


\begin{proof}
By Lemma \ref{lem:exact:sequence:cohomology}(b), we have $\ker \pi_q = R_u(\overline{\mathfrak{G}}_\Omega)(\kappa)$,
hence it is a $p$-group as a group of rational points of a unipotent $\kappa$-group.
The following sequence of group homomorphism $1 \longrightarrow \ker \pi_\kappa \stackrel{\subseteq}{\longrightarrow} \ker ( \pi_q \circ \pi_\kappa ) \stackrel{\pi_\kappa}{\longrightarrow} \ker \pi_q \stackrel{\pi_q}{\longrightarrow} 1$ is exact.
Indeed, check that $\pi_\kappa(\ker \pi_q \circ \pi_\kappa) = \ker \pi_q$.

If $g \in \pi_\kappa(\ker \pi_q \circ \pi_\kappa)$, then there exists $h \in \ker \pi_q \circ \pi_\kappa$ such that $g = \pi_\kappa(h)$.
hence $\pi_q(g) = \pi_q \circ \pi_\kappa(h) = 1$, and so $g \in \ker \pi_q$.

Conversely, if $g \in \ker \pi_q$, by surjectivity of $\pi_\kappa$ (given by Lemma \ref{lem:canonical:morphism}), there exists $h \in \mathfrak{G}_\Omega(\mathcal{O}_k)$ such that $\pi_\kappa(h) = g$.
Hence $\pi_q \circ \pi_\kappa(h) = \pi_q(g) = 1$, and so $h \in \ker (\pi_q \circ \pi_\kappa)$.
Hence $g \in \pi_\kappa(\ker (\pi_q \circ \pi_\kappa))$.

As a consequence, $P^+_\Omega = \ker \pi_\Omega$ is a pro-$p$ group.\qed
\end{proof}

\begin{Lem}\label{lem:normal:p-group:finite:field}
Let $k$ be a finite field of characteristic $p$.
If $H$ is a reductive $k$-group, then $H(k)$ does not have a non-trivial normal $p$-subgroup.
\end{Lem}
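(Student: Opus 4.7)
The plan is to reduce the problem to a group-theoretic elementary observation about $p$-Sylow subgroups, and then exhibit two $p$-Sylow subgroups of $H(k)$ whose intersection is trivial. The first step is the standard fact: a normal $p$-subgroup $N \triangleleft H(k)$ must be contained in every $p$-Sylow subgroup. Indeed, if $P$ is a $p$-Sylow, then normality of $N$ makes $N P$ a subgroup, and $N P / P \simeq N / (N \cap P)$ is a $p$-group, so $N P$ is a $p$-subgroup containing $P$; by maximality of $P$ we conclude $N P = P$, hence $N \subseteq P$.

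Next, I invoke Lemma \ref{lem:Sylow:over:finite:field}, which identifies the $p$-Sylow subgroups of $H(k)$ with the groups $B_u(k)$, $B$ ranging over the Borel $k$-subgroups of $H$. Combined with the previous step, this yields
\begin{equation*}
N \;\subseteq\; \bigcap_{B} B_u(k),
\end{equation*}
the intersection being taken over all Borel $k$-subgroups of $H$.

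The crucial third step is to notice that it is enough to use two opposite Borel $k$-subgroups. Since $k$ is finite (hence perfect) and $H$ is connected reductive, Lang's theorem implies that $H$ is quasi-split over $k$, so it contains a Borel $k$-subgroup $B$ and inside it a maximal $k$-torus $T$. The Borel $B^-$ opposite to $B$ relative to $T$ is then automatically defined over $k$: the Frobenius preserves $T$ and the positive Weyl chamber associated to $B$, hence also the negative one, which determines $B^-$. Both $B_u(k)$ and $B^-_u(k)$ are therefore $p$-Sylow subgroups of $H(k)$ containing $N$.

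Finally, I use the well-known scheme-theoretic identity $B_u \cap B^-_u = 1$, which follows from $B \cap B^- = T$ (a torus, containing no nontrivial unipotent element) together with the fact that $B_u \cap B^-_u$ is a unipotent subgroup of $T$. Consequently
\begin{equation*}
N \;\subseteq\; B_u(k) \cap B^-_u(k) \;=\; (B_u \cap B^-_u)(k) \;=\; \{1\},
\end{equation*}
which proves the lemma. No real obstacle appears in this plan; the only point requiring attention is the Galois-stability of $B^-$, which is settled by Lang's theorem and the structure of the Frobenius action on the root datum.
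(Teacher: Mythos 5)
Your proof is correct and follows essentially the same route as the paper: both arguments place the normal $p$-subgroup inside the unipotent radicals of two opposite Borel $k$-subgroups via Lemma \ref{lem:Sylow:over:finite:field} and conclude from $B \cap B^- = T$ being a torus, whose rational points contain no nontrivial unipotent (hence no nontrivial $p$-) elements. The only cosmetic difference is the device used to land in the second unipotent radical: the paper conjugates by a rational representative $n \in \mathcal{N}_H(T)(k)$ carrying $B$ to the opposite Borel and uses normality directly, whereas you use that a normal $p$-subgroup lies in every $p$-Sylow together with the Frobenius-stability of $B^-$; both are sound.
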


\begin{proof}
Let $P$ be a normal $p$-subgroup of $H(k)$.
It is a subgroup of a $p$-Sylow subgroup of $H(k)$.
By Lemma \ref{lem:Sylow:over:finite:field}, there exists a Borel $k$-subgroup $B$ such that $P \subset \mathcal{R}_u(B)(k)$.

Let $S$ be a maximal $k$-split torus of $H$.
Denote $T = \mathcal{Z}_H(S)$, it is a maximal torus of $H$ defined over $k$ and contained in $B$.
Let $n \in \mathcal{N}_H(T)(k)$ such that $B$ and $n B n^{-1}$ are opposite Borel $k$-subgroups.
Hence, $B \cap n B n^{-1} = T$ \cite[14.1]{Borel} is a torus.
We have $n P n^{-1} = P$ because $P$ is normal in $H(k)$.
Hence, $P$ is a subgroup of $T(k)$ and $\#T(k)$ is prime to $p$.
As a consequence $P \subset T(k)$ is trivial.\qed
\end{proof}

To obtain results about the maximality of $\ker \pi_\Omega$, we require that $\pi_\Omega$ is surjective.

\begin{Lem}\label{lem:pi:surjective}
The morphism of abstract groups $\pi_\Omega$ is surjective.

In particular, if $Q$ is a $p$-Sylow subgroup of  $\overline{\mathfrak{G}}_\Omega^{\mathrm{red}}(\kappa)$, then $\pi_\Omega^{-1}(Q)$ is a maximal pro-$p$ subgroup of $\mathfrak{G}_\Omega^\dagger(\mathcal{O}_k)$.
\end{Lem}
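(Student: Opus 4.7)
The plan is to establish surjectivity of $\pi_\Omega$ first, and then deduce the characterization of maximal pro-$p$ subgroups of $\mathfrak{G}_\Omega^\dagger(\mathcal{O}_k)$ as preimages of $p$-Sylow subgroups as a formal consequence.

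For the surjectivity of $\pi_\Omega = \pi_q \circ \pi_\kappa$: the morphism $\pi_\kappa$ is already surjective by Lemma \ref{lem:canonical:morphism}, so it suffices to check that $\pi_q : \overline{\mathfrak{G}}_\Omega(\kappa) \to \overline{\mathfrak{G}}_\Omega^{\mathrm{red}}(\kappa)$ is surjective on $\kappa$-points. Viewed as a morphism of algebraic $\kappa$-groups, $\pi_q$ sits in a short exact sequence whose kernel is $R_u(\overline{\mathfrak{G}}_\Omega^\circ)$, a smooth connected unipotent $\kappa$-group (smoothness comes from $\kappa$ being perfect, as noted above). Because $\kappa$ is a finite field, Lang's theorem gives $H^1(\kappa, R_u(\overline{\mathfrak{G}}_\Omega^\circ)) = 1$. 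The associated long exact sequence in Galois cohomology then yields surjectivity on $\kappa$-points, and composing with the surjective $\pi_\kappa$ gives surjectivity of $\pi_\Omega$.

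For the maximality statement, let $Q$ be a $p$-Sylow subgroup of the finite group $\overline{\mathfrak{G}}_\Omega^{\mathrm{red}}(\kappa)$. Since $\pi_\Omega$ is now known to be surjective, the preimage $\pi_\Omega^{-1}(Q)$ is an extension of the finite $p$-group $Q$ by $P_\Omega^+ = \ker \pi_\Omega$, which is pro-$p$ by Proposition \ref{prop:kernel:pro-p}. Hence $\pi_\Omega^{-1}(Q)$ is itself a pro-$p$ group. Conversely, if $H$ is any pro-$p$ subgroup of $\mathfrak{G}_\Omega^\dagger(\mathcal{O}_k)$ containing $\pi_\Omega^{-1}(Q)$, then $\pi_\Omega(H)$ is the image of a pro-$p$ group in the finite discrete group $\overline{\mathfrak{G}}_\Omega^{\mathrm{red}}(\kappa)$, hence a finite $p$-subgroup containing $Q$. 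Maximality of $Q$ as a $p$-Sylow forces $\pi_\Omega(H) = Q$, and therefore $H \subseteq \pi_\Omega^{-1}(Q)$. This establishes that $\pi_\Omega^{-1}(Q)$ is a maximal pro-$p$ subgroup.

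The only nontrivial step is the surjectivity of $\pi_q$ on $\kappa$-points, which is essentially Lang's theorem applied to a smooth connected unipotent group over the finite residue field; the rest is the standard correspondence between subgroups of a quotient and subgroups of the total group containing the kernel, combined with conjugacy/maximality of $p$-Sylow subgroups in the finite quotient.
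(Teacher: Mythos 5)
Your proof is correct and follows essentially the same route as the paper: both reduce surjectivity of $\pi_q$ on $\kappa$-points to the vanishing of $H^1\bigl(\kappa, R_u(\overline{\mathfrak{G}}_\Omega)\bigr)$ (you via Lang's theorem for smooth connected groups over a finite field, the paper via Serre's vanishing result for split unipotent groups over a perfect field) and then deduce that $\pi_\Omega^{-1}(Q)$ is pro-$p$ as an extension of the finite $p$-group $Q$ by the pro-$p$ kernel $P_\Omega^+$, with maximality following from Sylow maximality of $Q$ in the finite quotient. Your explicit verification of the maximality step merely spells out what the paper leaves implicit.
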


\begin{proof}
A finite field is perfect, hence by \cite[III.2.1 Prop. 6]{SerreCohomologieGaloisienne} applied to the connected ($\kappa$-split) unipotent $\kappa$-group  $U = R_u(\overline{\mathfrak{G}_\kappa})$, we have $H^1(\kappa,U) = 0$.
Hence by \cite[I.5.5 Prop.38]{SerreCohomologieGaloisienne} the morphism of abstract groups $\pi_q$ is surjective.
According to Lemma \ref{lem:canonical:morphism}, the composite morphism $\pi_\Omega$ is surjective.

By Proposition \ref{prop:kernel:pro-p}, the surjective morphism $\pi_\Omega$ has a pro-$p$ kernel. Hence, for every $p$-subgroup $Q$ of $\overline{\mathfrak{G}}_\Omega^{\mathrm{red}}(\kappa)$, the group $\pi_\Omega^{-1}(Q)$ is pro-$p$ (as an extension of such groups).
Hence, if $Q$ is a $p$-Sylow subgroup, then $\pi_\Omega^{-1}(Q)$ is a maximal pro-$p$ subgroup.\qed
\end{proof}

\begin{Prop}\label{prop:maximal:normal:pro-p}
If $\overline{\mathfrak{G}_\Omega}$ is connected,
then the kernel $P^+_\Omega$ is a maximal normal pro-$p$ subgroup of $\mathfrak{G}_\Omega^\dagger(\mathcal{O}_k)$.
\end{Prop}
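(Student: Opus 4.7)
The plan is to assume the strictly larger situation and push the problem to the special fiber, where we can apply the finite-field input already established (Lemma~\ref{lem:normal:p-group:finite:field}). Proposition~\ref{prop:kernel:pro-p} already gives that $P^+_\Omega$ is a normal pro-$p$ subgroup of $\mathfrak{G}_\Omega^\dagger(\mathcal{O}_k)$, so the remaining content of the statement is the maximality among such subgroups.

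First, I would pick any normal pro-$p$ subgroup $N$ of $\mathfrak{G}_\Omega^\dagger(\mathcal{O}_k)$ that contains $P^+_\Omega$, and consider its image $\pi_\Omega(N) \subset \overline{\mathfrak{G}}_\Omega^{\mathrm{red}}(\kappa)$. Since by Lemma~\ref{lem:pi:surjective} the map $\pi_\Omega$ is surjective, $\pi_\Omega(N)$ is a normal subgroup of the finite group $\overline{\mathfrak{G}}_\Omega^{\mathrm{red}}(\kappa)$; being a continuous image of a pro-$p$ group in a finite group, it is a finite $p$-group.

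Next, I would use the connectedness hypothesis on $\overline{\mathfrak{G}}_\Omega$: since $\overline{\mathfrak{G}}_\Omega^{\mathrm{red}}$ is a smooth $\kappa$-group obtained from $\overline{\mathfrak{G}}_\Omega$ by quotienting by the unipotent radical of its identity component, it is connected (quotient of a connected group) and reductive by construction. Therefore Lemma~\ref{lem:normal:p-group:finite:field} applies to $\overline{\mathfrak{G}}_\Omega^{\mathrm{red}}$ over the finite residue field $\kappa$ and forces $\pi_\Omega(N)$ to be trivial. This gives $N \subset \ker \pi_\Omega = P^+_\Omega$, and since $P^+_\Omega \subset N$ by assumption we conclude $N = P^+_\Omega$, which is exactly the maximality statement.

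The only delicate point is the use of the connectedness hypothesis: without it, the reductive quotient $\overline{\mathfrak{G}}_\Omega^{\mathrm{red}}$ need not be connected, and Lemma~\ref{lem:normal:p-group:finite:field} (which is stated for connected reductive groups) would fail to apply, so a non-trivial normal $p$-subgroup of $\overline{\mathfrak{G}}_\Omega^{\mathrm{red}}(\kappa)$ could in principle arise from the component group. All other steps (surjectivity of $\pi_\Omega$, that $\ker\pi_\Omega$ is pro-$p$, that continuous images of pro-$p$ groups in finite groups are $p$-groups) are already in hand.
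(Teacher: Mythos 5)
Your proof is correct and follows essentially the same route as the paper: push a normal pro-$p$ subgroup containing $P^+_\Omega$ through the surjection $\pi_\Omega$, observe that its image is a normal $p$-subgroup of $\overline{\mathfrak{G}}_\Omega^{\mathrm{red}}(\kappa)$, and invoke Lemma~\ref{lem:normal:p-group:finite:field} (valid thanks to the connectedness hypothesis) to force that image to be trivial. Your closing remark on where connectedness is genuinely needed matches the paper's use of the hypothesis.
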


\begin{proof}
Let $\widetilde{P}$ be a normal pro-$p$ subgroup of $\mathfrak{G}_\Omega(\mathcal{O}_k)$ containing $P^+_\Omega$.
By \cite[I.1.4 Prop.4]{SerreCohomologieGaloisienne}, its image by the surjective morphism $\pi_\Omega$ (see Lemma \ref{lem:pi:surjective}) is a normal $p$-subgroup of $\overline{\mathfrak{G}}_\Omega^{\mathrm{red}}(\kappa)$.

When $\overline{\mathfrak{G}}_\Omega$ is connected, the quotient $\overline{\mathfrak{G}}_\Omega^{\mathrm{red}}$ is a connected reductive $\kappa$-group.
Hence, by Lemma \ref{lem:normal:p-group:finite:field}, $\pi(\widetilde{P})$ is trivial and $\widetilde{P} = P^+_\Omega$.\qed
\end{proof}

\subsubsection*{Under simple connectedness assumption}

From now on, assume that the semisimple $k$-group $G$ is simply connected.
Hence $\mathfrak{G}_\Omega^\dagger = \mathfrak{G}_\Omega$ \cite[4.6.32 and 5.1.31]{BruhatTits2}.

\begin{Prop}
\label{prop:kernel:maximal:pro-p:simply:connected:case}
Assume $\Omega = \mathbf{c} \subset \mathbb{A}$ is an alcove and $G$ is simply connected.
Then $P^+_\Omega$ is a maximal pro-$p$ subgroup of $\mathfrak{G}_\Omega(\mathcal{O}_k)$.
\end{Prop}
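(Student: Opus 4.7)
\noindent\emph{Proof plan.} The plan is to exhibit $P^+_{\mathbf{c}}$ as the preimage under $\pi_{\mathbf{c}}$ of the trivial $p$-Sylow subgroup of $\overline{\mathfrak{G}}_{\mathbf{c}}^{\mathrm{red}}(\kappa)$, so that maximality follows at once from Lemma \ref{lem:pi:surjective}. For this, it is enough to show that the finite group $\overline{\mathfrak{G}}_{\mathbf{c}}^{\mathrm{red}}(\kappa)$ has order prime to $p$.

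First I would use the simply connected hypothesis together with the alcove hypothesis to describe $\overline{\mathfrak{G}}_{\mathbf{c}}^{\mathrm{red}}$ explicitly. In the simply connected case one has $\mathfrak{G}_{\mathbf{c}} = \mathfrak{G}_{\mathbf{c}}^\dagger$ by \cite[5.2.9]{BruhatTits2}, and the special fiber $\overline{\mathfrak{G}}_{\mathbf{c}}$ is connected; hence $\overline{\mathfrak{G}}_{\mathbf{c}}^{\mathrm{red}}$ is a connected reductive $\kappa$-group. Since $\mathbf{c}$ is an alcove, it is open in $\mathbb{A}$, and as already remarked in the text $\Phi_{\mathbf{c}} = \emptyset$. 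Thus the connected reductive $\kappa$-group $\overline{\mathfrak{G}}_{\mathbf{c}}^{\mathrm{red}}$ has empty root system, which forces it to be a $\kappa$-torus $T$.

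Second, I would observe that for any $\kappa$-torus $T$ over the finite field $\kappa$ of characteristic $p$, the finite group $T(\kappa)$ has order prime to $p$: the inclusion $T(\kappa) \hookrightarrow T(\overline{\kappa}) \simeq (\overline{\kappa}^{\times})^{\dim T}$ shows that every element of $T(\kappa)$ has order prime to $p$, because $\overline{\kappa}^{\times}$ has no nontrivial $p$-torsion in characteristic $p$. Consequently $\{1\}$ is a $p$-Sylow subgroup of $\overline{\mathfrak{G}}_{\mathbf{c}}^{\mathrm{red}}(\kappa)$, and Lemma \ref{lem:pi:surjective} directly yields that $P^+_{\mathbf{c}} = \pi_{\mathbf{c}}^{-1}(\{1\})$ is a maximal pro-$p$ subgroup of $\mathfrak{G}_{\mathbf{c}}(\mathcal{O}_k)$.

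The only substantive step is the first: ensuring that in the simply connected alcove case the reductive quotient collapses all the way down to a torus. This relies on combining two facts from \cite{BruhatTits2}, namely connectedness of the special fiber in the simply connected case and the general identification of the root system of $\overline{\mathfrak{G}}_\Omega^{\mathrm{red}}$ with $\Phi_\Omega$, both of which are already invoked in the surrounding text. Everything else is just Lemma \ref{lem:pi:surjective} together with the elementary remark on tori over finite fields.
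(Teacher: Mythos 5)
Your proposal is correct and follows essentially the same route as the paper: both reduce to showing that $\overline{\mathfrak{G}}_{\mathbf{c}}^{\mathrm{red}}$ is a connected reductive $\kappa$-group with empty root system (using connectedness of the special fiber in the simply connected case), hence a $\kappa$-torus whose group of rational points has order prime to $p$, and then conclude via the surjectivity of $\pi_{\mathbf{c}}$ and the pro-$p$ kernel from Lemma \ref{lem:pi:surjective}. The only cosmetic difference is that the paper phrases the last step as ``the image of any pro-$p$ subgroup is a trivial $p$-group, so every pro-$p$ subgroup lies in $\ker\pi_{\mathbf{c}}$,'' whereas you invoke Lemma \ref{lem:pi:surjective} with $Q=\{1\}$ as the $p$-Sylow subgroup; these are equivalent.
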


First, recall the following fact, given by Tits \cite[3.5.2]{TitsCorvallis}:

\begin{Lem}\label{lem:connectedness:of:special:fiber}
Under above assumptions and notations,
the algebraic $\kappa$-group $\overline{\mathfrak{G}}_\Omega$ is connected.
\end{Lem}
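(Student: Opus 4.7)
The plan is to deduce the connectedness of $\overline{\mathfrak{G}}_{\mathbf{c}} = (\mathfrak{G}_{\mathbf{c}}^\dagger)_\kappa$ directly from the identification of the two relevant $\mathcal{O}_k$-integral models under the simply connected hypothesis, so no further analysis of root groups, big cells or Bruhat decomposition is needed.

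First, I would invoke the convention recalled in the Notation preceding Theorem \ref{thm:description:models:pro-p}: Loisel's $\mathfrak{G}_\Omega$ is by definition Bruhat--Tits' smooth affine $\mathcal{O}_k$-model $\mathfrak{G}_\Omega^\circ$ of \cite{BruhatTits2}. The superscript $\circ$ there is precisely the fiberwise identity component of $\mathfrak{G}_\Omega^\dagger$; in particular the special fiber $(\mathfrak{G}_\Omega)_\kappa$ is connected by construction. This step requires nothing on $\Omega$ beyond the boundedness and convexity assumptions entering the definition of the integral models.

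Second, when $G$ is simply connected, the pointwise and setwise stabilizers of any facet of $X(G,k)$ coincide (the $G(k)$-action on the building is type-preserving), so $\mathfrak{G}_\Omega$ and $\mathfrak{G}_\Omega^\dagger$ agree as $\mathcal{O}_k$-group schemes; this is \cite[5.2.9]{BruhatTits2}, explicitly recalled in the same Notation. Applying the identity $\mathfrak{G}_\Omega = \mathfrak{G}_\Omega^\dagger$ to $\Omega = \mathbf{c}$ yields
\begin{equation*}
\overline{\mathfrak{G}}_{\mathbf{c}} = (\mathfrak{G}_{\mathbf{c}}^\dagger)_\kappa = (\mathfrak{G}_{\mathbf{c}})_\kappa,
\end{equation*}
which is connected by the first step.

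The only potential obstacle is bookkeeping of conventions: one must confirm that the $\mathcal{O}_k$-model denoted $\mathfrak{G}_\Omega$ in \cite{Landvogt} and in \cite[§4.6, §5.1]{BruhatTits2} is indeed the fiberwise identity component of $\mathfrak{G}_\Omega^\dagger$ --- equivalently, that it has connected special fiber. Since this is the standard meaning of the superscript $\circ$ and is how both references construct the object, no genuine additional work is required beyond citing \cite[5.2.9]{BruhatTits2} together with the definition of $\mathfrak{G}_\Omega^\circ$.
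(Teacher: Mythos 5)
Your argument is correct, but it takes a different (and more self-contained) route than the paper, which offers no proof at all: the lemma is simply recorded as a fact stated by Tits \cite[3.5.2]{TitsCorvallis}. You instead deduce it from the identification $\mathfrak{G}_{\mathbf{c}} = \mathfrak{G}_{\mathbf{c}}^\dagger$ for simply connected $G$ (\cite[5.2.9]{BruhatTits2}), already recorded in the paper's Notation and again at the head of the subsection, combined with the fact that $\mathfrak{G}_{\mathbf{c}}$ is the model denoted $\mathfrak{G}_{\mathbf{c}}^\circ$ in \cite{BruhatTits2}, whose special fiber is connected by construction. Since $\mathfrak{G}_{\mathbf{c}}^\circ$ is the open subgroup scheme of $\mathfrak{G}_{\mathbf{c}}^\dagger$ obtained by retaining only the identity component of the special fiber, the equality of the two models is literally equivalent to the connectedness asserted by the lemma; so what your proof really establishes is that the lemma restates the sentence immediately preceding it. That is a fair observation and the logic is sound as bookkeeping within this paper, but two caveats are in order: (i) you must make sure the reference used for $\mathfrak{G}_{\mathbf{c}} = \mathfrak{G}_{\mathbf{c}}^\dagger$ yields equality of $\mathcal{O}_k$-group schemes and not merely of their groups of integral points (this does follow, by smoothness and the uniqueness of the smooth model attached to a prescribed group of points over the maximal unramified extension); and (ii) in \cite{BruhatTits2} the identification of the two models for simply connected groups is itself obtained by proving connectedness of the special fiber of the stabilizer scheme, so your derivation, while a legitimate citation chain, does not provide an independent proof of the underlying fact --- which is presumably why the author attaches the statement to \cite[3.5.2]{TitsCorvallis} rather than leaving it implicit.
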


\begin{proof}[Proof of Proposition \ref{prop:kernel:maximal:pro-p:simply:connected:case}]
Since $\Omega$ is an alcove, the root system of $\overline{\mathfrak{G}}_\Omega / R_u(\overline{\mathfrak{G}}_\Omega)$ is empty \cite[4.6.12(i), 5.1.31]{BruhatTits2}.
By Lemma \ref{lem:connectedness:of:special:fiber}, $\overline{\mathfrak{G}}_\Omega^{\mathrm{red}}$ is a connected reductive quasi-split $\kappa$-group with a trivial root system.
Hence, it is a $\kappa$-torus and so, does not have a non-trivial $p$-subgroup.
Hence, for every pro-$p$ subgroup $P$ of $\mathfrak{G}_\Omega^\dagger(\mathcal{O}_k) = \mathfrak{G}_\Omega(\mathcal{O}_k)$, the image $\pi_\Omega(P)$ by the surjective morphism $\pi_\Omega$ (Lemma \ref{lem:pi:surjective}) is a $p$-group \cite[1.4 Prop.4]{SerreCohomologieGaloisienne}, hence trivial.
As a consequence, the kernel $P^+_\Omega$ is the (unique) maximal pro-$p$ subgroup of $\mathfrak{G}_\Omega(\mathcal{O}_k)$.\qed
\end{proof}

Now, one can give a proof of Theorem \ref{thm:description:models:pro-p}.

\begin{proof}[Proof of Theorem \ref{thm:description:models:pro-p}]
Let $P$ a maximal pro-$p$ subgroup.
By Proposition \ref{prop:pro-p:stabilises:alcove}, we have $P \subset \mathrm{Stab}_{G(k)}(\mathbf{c})$.
Let $\mathbf{c}_0 \subset \mathbb{A}$.
By strong transitivity of $G(k)$ on the building $X(G,k)$, there exists $g \in G(k)$ such that $g \mathbf{c}_0 = \mathbf{c}$.
Hence, $g^{-1} P g$ is a maximal pro-$p$ subgroup of $\mathfrak{G}_{\mathbf{c}_0}(\mathcal{O}_k)$.
By Proposition \ref{prop:kernel:maximal:pro-p:simply:connected:case}, we have $P = g P^+_{\mathbf{c}_0} g^{-1}$.\qed
\end{proof}

\subsubsection*{Valued root group datum in the quasi-split simply connected case}

To conclude in the simply connected case, let us interpret this group in terms of a valued root group datum.
This could be a bit tricky in the general case and, in the two next propositions, we assume that $G$ is, moreover, a quasi-split semisimple $k$-group.
In a further work \cite{Loisel-GenerationFrattini}, we compute the Frattini subgroup of a maximal pro-$p$-subgroup by the explicit decomposition of Proposition \ref{prop:explicit:decomposition}.

\begin{Prop}\label{prop:explicit:decomposition}
Let $G$ be a quasi-split simply connected semisimple group defined over a local field $k$ of residual characteristic $p$.
Let $S$ be a maximal $k$-split torus, $T = \mathcal{Z}_G(S)$ be the associated maximal $k$-torus and $\mathbf{c}$ be an alcove of the apartment associated to $T$.
Let $P^+_\mathbf{c}$ be the maximal pro-$p$ subgroup of $G(k)$ that fixes $\mathbf{c}$.
The group $P^+_\mathbf{c}$ admits the following directly generated product structure:
\begin{equation*}\label{eqn:directly:generated:product}
P_{\mathbf{c}}^+ = \left( \prod_{a \in \Phi_{\mathrm{nd}}^+} U_{-a,f_\mathbf{c}(-a)}  \right) \cdot T(k)_b^+ \cdot  \left( \prod_{a \in \Phi_{\mathrm{nd}}^+} U_{a,f_\mathbf{c}(a)} \right)
\end{equation*}
where $T(k)_b^+$ is the (unique) maximal pro-$p$ subgroup of $T(k)$ and $\Phi_{\mathrm{nd}}$ denotes the non-divisible roots of the relative $k$-root system $\Phi(G,S)$.

In particular, $T(k)_b^+ = P^+_{\mathbf{c}} \cap T(k)_b$ where $T(K)_b$ is the maximal compact subgroup of $T(k)$.
\end{Prop}

\begin{proof}
By the simple connectedness assumption, Proposition 3.5 of \cite{Landvogt} gives $\mathfrak{T}(\mathcal{O}_k) = T(k)_b$ where $\mathfrak{T}$ denotes the integral model of $T$ defined in \cite[§3]{Landvogt}.
By definition of the integral models \cite[4.3.2 and 4.3.5]{BruhatTits2}, for any root $a \in \Phi$, we have $\mathfrak{U}_{a,\mathbf{c}}(\mathcal{O}_k) = U_{a,\mathbf{c}}$  and, if $2a \in \Phi$, we have $U_{2a,\mathbf{c}} \subset U_{a,\mathbf{c}}$. Therefore, we have $\mathfrak{U}_{-\Phi^+,\mathbf{c}}(\mathcal{O}_k) = \left( \prod_{a \in \Phi_{\mathrm{nd}}^+} U_{-a,f_\mathbf{c}(-a)}  \right)$ and $\mathfrak{U}_{\Phi^+,\mathbf{c}}(\mathcal{O}_k) = \left( \prod_{a \in \Phi_{\mathrm{nd}}^+} U_{a,f_\mathbf{c}(a)} \right)$ by \cite[4.6.3]{BruhatTits2}.

By \cite[4.6.7]{BruhatTits2}, we have $\mathfrak{G}_\mathbf{c}(\mathcal{O}_k) = \mathfrak{U}_{\Phi^+,\mathbf{c}}(\mathcal{O}_k) \cdot \mathfrak{U}_{\Phi^-,\mathbf{c}}(\mathcal{O}_k) \cdot N_\mathbf{c}$
where $N_\mathbf{c} = \left\{g \in \mathcal{N}_G(S)(k)\ ,\ \forall x \in \mathbf{c}\ g \cdot x = x \right\}$.
By \cite[6.4.9 (iv)]{BruhatTits1}, $N_\mathbf{c}$ is the group generated by the $N_{a,\mathbf{c}} = N_\mathbf{c} \cap \langle U_{-a,\mathbf{c}} , U_{a,\mathbf{c}} \rangle$ for $a \in \Phi$.
Since $\mathbf{c}$ is an alcove, for any relative root $a \in \Phi$, we have $f_\mathbf{c}(a) + f_\mathbf{c}(-a) > 0$.
Hence, by \cite[8.6 (i)]{Landvogt}, we have $N_{a,\mathbf{c}} \subset T(k)_b$ .
Since $N_\mathbf{c}$ contains $T(k)_b$, we have the equality $N_\mathbf{c} = T(k)_b$.
Thus, we have $\mathfrak{G}_\mathbf{c}(\mathcal{O}_k) = \mathfrak{U}_{\Phi^-,\mathbf{c}}(\mathcal{O}_k) \cdot \mathfrak{T}(\mathcal{O}_k) \cdot \mathfrak{U}_{\Phi^+,\mathbf{c}}(\mathcal{O}_k)$.

In the proof of Proposition \ref{prop:kernel:maximal:pro-p:simply:connected:case}, we have seen that $\overline{\mathfrak{G}}_\mathbf{c}^{\mathrm{red}}(\kappa)$ does not have a non-trivial $p$-subgroup.
Hence $\mathfrak{U}_{\pm\Phi^+,\mathbf{c}}(\mathcal{O}_k) \subset \ker \pi_\mathbf{c} = P^+_\mathbf{c}$ since the image of a pro-$p$ group by a surjective continuous morphism is a pro-$p$ group.
Thus, we obtain the equality.\qed
\end{proof}

By quasi-splitness and simple connectedness, the maximal $k$-torus $T$ is an induced torus \cite[4.4.16]{BruhatTits2}, generated by coroots, and we can be more precise about the above description by root group datum:

\begin{Prop}
Let $G$ be a quasi-split simply connected semisimple group defined over a local field $k$ of residual characteristic $p$.
Let $S$ be a maximal $k$-split torus and $T = \mathcal{Z}_G(S)$ be the associated maximal $k$-torus.
Let $\Delta$ be a basis of the relative root system $\Phi = \Phi(G,S)$.
There is the following isomorphism of topological groups:
\begin{equation}
\begin{array}{cccc}
\prod_{a \in \Delta} \hat{a}^\vee : & \prod_{a \in\Delta} (1+ \mathfrak{m}_{l_a}) & \rightarrow & T(k)_b^+\\
& (t_a)_{a \in \Delta} & \mapsto & \prod_{a \in \Delta} \hat{a}^\vee(t_a)\end{array}
\end{equation}
where $\hat{a} = 2a$ if $2a \in \Phi$, and $\hat{a} = a$ otherwise;
$L_a$ denotes the minimal field of definition of the root $a$ (defined in \cite[4.1.3]{BruhatTits2})
and $\mathfrak{m}_{L_a}$ denotes the maximal ideal of its ring of integers.
\end{Prop}

\begin{proof}
Since $G$ is a simply connected quasi-split semisimple $k$-group, by \cite[4.4.16]{BruhatTits2}, $T$ is an induced torus and, more precisely, there is the following isomorphism $\prod_{a \in \Delta} \hat{a}^\vee :  \prod_{a \in \Delta} R_{L_a/K}(\mathbb{G}_{m,L_a}) \simeq T$, where $\Delta$ denotes a basis of the relative root system $\Phi$.
By uniqueness, up to isomorphism, of the $\mathcal{O}_k$-model, $\mathfrak{T}$ is $\mathcal{O}_k$-isomorphic to $\prod_{a \in \Delta} R_{\mathcal{O}_{L_a}/\mathcal{O}_k}(\mathbb{G}_{m,\mathcal{O}_{L_a}})$.
Hence, there is a natural isomorphism $\prod_{a \in \Delta}\mathcal{O}_{L_a}^\times \simeq \mathfrak{T}(\mathcal{O}_k) = T(k)_b$ of topological abelian groups,
and the maximal pro-$p$ subgroup is isomorphic to the direct product $\prod_{a \in \Delta} (1 + \mathfrak{m}_{L_a})$.\qed
\end{proof}

\subsection{Description using the action on a building} \label{subsection:description:building}

We now can derive the useful description of a maximal pro-$p$ subgroup of $G(k)$, as a pro-$p$-Sylow of the setwise stabilizer of a suitable alcove.
To prove Theorem \ref{thm:description:maximal:pro-p}, it suffices to show that every maximal pro-$p$ subgroup of $G(k)$ can be realised as such a group.

\begin{proof}[Proof of Theorem \ref{thm:description:maximal:pro-p}]
Let $P$ be a maximal pro-$p$ subgroup of $G(k)$.
By Proposition \ref{prop:pro-p:stabilises:alcove},
there exists an alcove $\mathbf{c}$ such that $P$ setwise stabilizes  $\mathbf{c}$.
By strong transitivity, we can and do assume that $\mathbf{c} \subset \mathbb{A}$.
In particular, $P$ is a maximal pro-$p$ subgroup of $\mathfrak{G}_{\mathbf{c}}^\dagger(\mathcal{O}_k)$.

Firstly, we show the uniqueness of such an alcove $\mathbf{c}$.
By Lemma \ref{lem:parahoric:compact}, the topological group $\mathfrak{G}_{\mathbf{c}}^\dagger(\mathcal{O}_k)$ is compact, hence profinite.
By Sylow theorem for profinite groups \cite[1.4 Prop.3 et 4 (a)]{SerreCohomologieGaloisienne}, there exists $g_0 \in \mathfrak{G}_{\mathbf{c}}^\dagger(\mathcal{O}_k)$ such that $P$ contains $g_0 P^+_\mathbf{c} g_0^{-1} = P^+_\mathbf{c}$.
It suffices to show that $P^+_\mathbf{c}$ does not stabilises any alcove of $X(G,k)$ different from $\mathbf{c}$.

For all $a \in \Phi$, the image  by $\pi_\mathbf{c}$ of the root group $U_{a,\mathbf{c}}(\mathcal{O}_k)$ is trivial because $\overline{\mathfrak{U}}_{a,\mathbf{c}}$ is a root group of $\overline{\mathfrak{G}}^{red}_\mathbf{c}$ \cite[10.34]{Landvogt}, hence trivial because $\mathbf{c}$ is an alcove \cite[10.36]{Landvogt}.
Hence $P^+_\mathbf{c}$ contains the subgroup $U_\mathbf{c}$ of $G(k)$ generated by $U_{a,\mathbf{c}}$ for every $a \in \Phi$.
The group $P^+_\mathbf{c}$ acts on the set of all facets of $X(G,k)$ not contained in $\mathop{cl}(\mathbf{c})$ since it setwise stabilizes $\mathop{cl}(\mathbf{c})$ and preserves the simplicial structure of $X(G,k)$.
Let $F$ be such a facet.
Let $A'$ be an apartment containing $\mathbf{c}$ and $F$.
Let $A''$ be an apartment containing $\mathbf{c}$ but not $F$.
Since the group $U_\mathbf{c}$ acts transitively on the set of apartments containing $\mathbf{c}$ \cite[13.7]{Landvogt},
there exists $u \in U_\mathbf{c} \subset P^+_\mathbf{c}$ such that $u \cdot A' = A''$.
Hence $P^+_\mathbf{c}$ does not stabilize $F$.

Conversely, let $\mathbf{c}$ be an alcove of $X(G)$ and $P$ be a maximal pro-$p$ subgroup of $\mathrm{Stab}_{G(k)}(\mathbf{c})$.
Let $P'$ be a maximal pro-$p$ subgroup of $G(k)$ containing $P$.
Such a $P'$ exists by Lemma \ref{lem:link:with:openness} and Proposition \ref{prop:compact:open:finitely:contained}.
Let $\mathbf{c}'$ be the unique alcove stabilized by $P'$, hence by $P$.
Since $P$ contains $P^+_{\mathbf{c}}$ according to Lemma \ref{lem:pi:surjective},
it does not stabilize any facet of $X(G,k)$ out from $\mathop{cl}(\mathbf{c})$.
Hence $\mathbf{c} = \mathbf{c}'$ and $P'$ is a maximal pro-$p$ subgroup of $\mathrm{Stab}_{G(k)}(\mathbf{c})$.
By maximality of $P$, we have $P' = P$.\qed
\end{proof}

\begin{Cor}
If $G$ is a simply connected semisimple $k$-group, then $P$ is a maximal pro-$p$ subgroup of $G(k)$ if, and only if, there exists an alcove $\mathbf{c}$ of $X(G,k)$ such that $P = P^+_\mathbf{c}$.
Moreover, such an alcove $\mathbf{c}$ is uniquely determined by $P$ and the set of fixed points by $P$ in $X(G,k)$ is exactly the simplicial closure $\mathop{cl}(\mathbf{c})$.
\end{Cor}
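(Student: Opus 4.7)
The plan is to read off this corollary as a direct specialization of Theorem \ref{thm:description:maximal:pro-p}, exploiting two features specific to the simply connected case: first, by \cite[5.2.9]{BruhatTits2} one has $\mathfrak{G}_\mathbf{c}^\dagger = \mathfrak{G}_\mathbf{c}$ and the action of $G(k)$ on $X(G,k)$ is type-preserving, so $\mathrm{Stab}_{G(k)}(\mathbf{c}) = \mathfrak{G}_\mathbf{c}(\mathcal{O}_k)$ fixes the whole simplicial closure $\mathop{cl}(\mathbf{c})$ pointwise (a type-preserving isometry that permutes the vertices of an alcove must fix each of them); second, by Proposition \ref{prop:kernel:maximal:pro-p:simply:connected:case} the group $P^+_\mathbf{c}$ is the \emph{unique} maximal pro-$p$ subgroup of $\mathfrak{G}_\mathbf{c}(\mathcal{O}_k)$, not merely a maximal one.

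For the first equivalence, Theorem \ref{thm:description:maximal:pro-p} states that $P$ is a maximal pro-$p$ subgroup of $G(k)$ if and only if there exists an alcove $\mathbf{c} \subset X(G,k)$ such that $P$ is a maximal pro-$p$ subgroup of $\mathrm{Stab}_{G(k)}(\mathbf{c})$. Under simple connectedness this stabilizer coincides with $\mathfrak{G}_\mathbf{c}(\mathcal{O}_k)$, which admits $P^+_\mathbf{c}$ as its only maximal pro-$p$ subgroup by Proposition \ref{prop:kernel:maximal:pro-p:simply:connected:case}. Hence $P = P^+_\mathbf{c}$; conversely $P^+_\mathbf{c}$ is itself maximal pro-$p$ in $G(k)$ since it is maximal in the stabilizer. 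The uniqueness of the alcove $\mathbf{c}$ attached to $P$ is already a part of the statement of Theorem \ref{thm:description:maximal:pro-p}.

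It remains to sharpen the inclusion $X(G,k)^P \subset \mathop{cl}(\mathbf{c})$ of Theorem \ref{thm:description:maximal:pro-p} into an equality. The reverse inclusion $\mathop{cl}(\mathbf{c}) \subset X(G,k)^{P^+_\mathbf{c}}$ is immediate from $P^+_\mathbf{c} \subset \mathfrak{G}_\mathbf{c}(\mathcal{O}_k)$ together with the type-preserving observation above. For the non-trivial inclusion, I would rerun the argument at the end of the proof of Theorem \ref{thm:description:maximal:pro-p}: since $\mathbf{c}$ is an alcove, $\overline{\mathfrak{U}}_{a,\mathbf{c}}$ is trivial in $\overline{\mathfrak{G}}^{\mathrm{red}}_\mathbf{c}$ for every relative root $a \in \Phi$ by \cite[10.36]{Landvogt}, so $U_{a,\mathbf{c}} \subset P^+_\mathbf{c}$. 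Therefore $P^+_\mathbf{c}$ contains the subgroup $U_\mathbf{c}$ generated by these root groups, which acts transitively on the set of apartments containing $\mathbf{c}$ by \cite[13.7]{Landvogt}. Given any facet $F$ not contained in $\mathop{cl}(\mathbf{c})$, one picks an apartment $A'$ containing both $\mathbf{c}$ and $F$ and an apartment $A''$ containing $\mathbf{c}$ but not $F$, and transports $A'$ onto $A''$ by an element $u \in U_\mathbf{c} \subset P^+_\mathbf{c}$; then $u \cdot F \neq F$, so $P^+_\mathbf{c}$ does not stabilize $F$. Passing from facet stabilization to pointwise fixation is automatic, since the action preserves the polysimplicial structure, so any fixed point must lie in a facet stabilized by the whole group. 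The main obstacle, such as it is, lies only in marshalling these ingredients: no new geometric input is needed beyond what is already present in the proof of Theorem \ref{thm:description:maximal:pro-p}.
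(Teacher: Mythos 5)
Your proposal is correct and follows essentially the same route as the paper: the first part is obtained by combining the description of maximal pro-$p$ subgroups via alcove stabilizers with the uniqueness of the maximal pro-$p$ subgroup $P^+_\mathbf{c}$ of $\mathfrak{G}_\mathbf{c}(\mathcal{O}_k)$ from Proposition \ref{prop:kernel:maximal:pro-p:simply:connected:case}, and the second part combines the containment $X(G,k)^P \subset \mathop{cl}(\mathbf{c})$ from Theorem \ref{thm:description:maximal:pro-p} with the fact that in the simply connected case the stabilizer of an alcove fixes it pointwise \cite[5.2.9]{BruhatTits2}. Your re-derivation of the containment by rerunning the $U_\mathbf{c}$-transitivity argument is redundant (the paper simply cites the already-proved statement) but harmless.
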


\begin{proof}
The first part is a consequence of Proposition \ref{prop:kernel:maximal:pro-p:simply:connected:case} and of the first part of Theorem \ref{thm:description:models:pro-p}.

When $G$ is simply connected, the stabilizer of an alcove is also its pointwise stabilizer \cite[5.2.9]{BruhatTits2}.
This and Theorem \ref{thm:description:models:pro-p} gives the second part.\qed
\end{proof}

\begin{Rq}
If $p$ does not divide the order of the group of automorphisms of an alcove, then a maximal pro-$p$ subgroup has to fix the alcove $\mathbf{c}$ that it stabilizes. Therefore, it can be written $P = P^+_\mathbf{c}$.

The example \ref{ex:adjoint:pro-p:group} provides a counter-example for an adjoint semisimple group.
\end{Rq}

\subsubsection*{Iwahori subgroups in the simply connected case}

Recall the following definitions \cite[5.2]{BruhatTits2}

\begin{Def}~

(1) Given a facet $F$ of $X(G,k)$, call \textbf{connected pointwise stabilizer} of $F$ the subgroup $\mathfrak{G}_F(\mathcal{O}_k)$ of $G(k)$.

(2) A subgroup of $G(k)$ is called a \textbf{parahoric} (resp. \textbf{Iwahori}) subgroup if, and only if, it is the connected pointwise stabilizer of a facet (reps. an alcove) of $X(G,k)$.
\end{Def}

To conclude this study of pro-$p$ subgroups, the following proposition, given by \cite[§3.4]{PlatonovRapinchuk}, is a kind of generalisation of Lemma \ref{lem:Sylow:over:finite:field}.

\begin{Prop}\label{prop:normalizer:maximal:pro-p}
Assume that $G$ is simply connected.
A subgroup of $G(k)$ is an Iwahori subgroup if, and only if, it is the normalizer in $G(k)$ of a maximal pro-$p$ subgroup of $G(k)$.
\end{Prop}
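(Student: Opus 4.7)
The plan is to leverage the bijection between maximal pro-$p$ subgroups of $G(k)$ and alcoves of $X(G,k)$ established in Theorem \ref{thm:description:maximal:pro-p} (and its corollary), together with the fact that, for simply connected $G$, the setwise stabilizer of an alcove coincides with its pointwise stabilizer, which is precisely the Iwahori subgroup $\mathfrak{G}_\mathbf{c}(\mathcal{O}_k)$.

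First I would fix an alcove $\mathbf{c} \subset X(G,k)$ and prove the key identity $\mathfrak{G}_\mathbf{c}(\mathcal{O}_k) = N_{G(k)}(P^+_\mathbf{c})$. The inclusion $\mathfrak{G}_\mathbf{c}(\mathcal{O}_k) \subset N_{G(k)}(P^+_\mathbf{c})$ is immediate from Proposition \ref{prop:kernel:pro-p}, which asserts that $P^+_\mathbf{c}$ is a normal subgroup of $\mathfrak{G}_\mathbf{c}^\dagger(\mathcal{O}_k) = \mathfrak{G}_\mathbf{c}(\mathcal{O}_k)$ (the equality using simple connectedness). For the reverse inclusion, take $g \in N_{G(k)}(P^+_\mathbf{c})$. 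Since $P^+_\mathbf{c}$ setwise stabilises $\mathbf{c}$, the conjugate $g P^+_\mathbf{c} g^{-1} = P^+_\mathbf{c}$ setwise stabilises $g \cdot \mathbf{c}$. But by Theorem \ref{thm:description:maximal:pro-p} there is a unique alcove setwise stabilised by the maximal pro-$p$ subgroup $P^+_\mathbf{c}$, hence $g \cdot \mathbf{c} = \mathbf{c}$. In the simply connected case, by \cite[5.2.9]{BruhatTits2}, the setwise stabiliser of $\mathbf{c}$ is its pointwise stabiliser, namely $\mathfrak{G}_\mathbf{c}(\mathcal{O}_k)$, so $g \in \mathfrak{G}_\mathbf{c}(\mathcal{O}_k)$.

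With this identity in hand, both directions of the proposition follow at once. If $I = \mathfrak{G}_\mathbf{c}(\mathcal{O}_k)$ is an Iwahori subgroup, then $P^+_\mathbf{c}$ is a maximal pro-$p$ subgroup of $G(k)$ by Proposition \ref{prop:kernel:maximal:pro-p:simply:connected:case} (combined with the description in Theorem \ref{thm:description:models:pro-p}), and $I$ is its normaliser by the above. Conversely, given a maximal pro-$p$ subgroup $P$ of $G(k)$, the corollary to Theorem \ref{thm:description:maximal:pro-p} furnishes a unique alcove $\mathbf{c}$ with $P = P^+_\mathbf{c}$, whence $N_{G(k)}(P) = \mathfrak{G}_\mathbf{c}(\mathcal{O}_k)$, an Iwahori subgroup.

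The substantive work was carried out earlier: the uniqueness of the alcove attached to a maximal pro-$p$ subgroup (Theorem \ref{thm:description:maximal:pro-p}), the identification of $P^+_\mathbf{c}$ with the maximal pro-$p$ subgroup of $\mathfrak{G}_\mathbf{c}(\mathcal{O}_k)$ (Proposition \ref{prop:kernel:maximal:pro-p:simply:connected:case}), and the coincidence of setwise and pointwise stabilisers of facets in the simply connected case. The only step that might require a brief justification is passing from $g \cdot \mathbf{c} = \mathbf{c}$ (a setwise statement) to membership in $\mathfrak{G}_\mathbf{c}(\mathcal{O}_k)$, which is where the simple connectedness assumption is genuinely used.
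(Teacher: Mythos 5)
Your proof is correct and follows essentially the same route as the paper: both rest on $P^+_\mathbf{c}$ being a normal maximal pro-$p$ subgroup of $\mathfrak{G}_\mathbf{c}(\mathcal{O}_k)$, the uniqueness of the alcove attached to a maximal pro-$p$ subgroup, and the coincidence of setwise and pointwise stabilizers of an alcove in the simply connected case. The only cosmetic difference is that you package everything into the single identity $N_{G(k)}(P^+_\mathbf{c})=\mathfrak{G}_\mathbf{c}(\mathcal{O}_k)$ and deduce $g\cdot\mathbf{c}=\mathbf{c}$ from the uniqueness statement of Theorem \ref{thm:description:maximal:pro-p}, where the paper reproves that step directly via the fixed-point set of $P^+_\mathbf{c}$.
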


\begin{proof}
Let $\mathbf{c}$ be an alcove of $\mathbb{A}$, let $g \in G(k)$ an element and $H$ the stabilizer of $g \cdot \mathbf{c}$.
Since the semisimple $k$-group $G$ is simply connected,
the stabilizer $H$ is in fact an Iwahori subgroup \cite[5.2.9]{BruhatTits2}.
By Proposition \ref{prop:kernel:pro-p}, $g P^+_{\mathbf{c}} g^{-1}$ is a normal pro-$p$ subgroup of $H$.
Hence $H \subset \mathcal{N}_{G(k)}(g P^+_\mathbf{c} g^{-1})$.
For every element $h \in \mathcal{N}_{G(k)}(g P^+_\mathbf{c} g^{-1})$, every $u \in P^+_\mathbf{c}$ and $x \in \mathbf{c}$,
one has $h^{-1}uh \cdot x = x$ because $g P^+_\mathbf{c} g^{-1}$ fixes $g \cdot \mathbf{c}$ pointwise.
Hence $h \cdot x $ is a point in $X(G,k)$ fixed by $P^+_\mathbf{c}$,
so $h \cdot x \in \mathbf{c}$ since it cannot be contained on the boundary of $\mathbf{c}$.
Since the action of $G(k)$ preserves the simplicial structure of $X(G,k)$,
the element $h$ stabilises $\mathbf{c}$.
Hence $\mathcal{N}_{G(k)}(g P^+_\mathbf{c} g^{-1}) = H$.
By Theorem \ref{thm:description:models:pro-p}, it gives the first implication.

Conversely,
let $U$ be a maximal pro-$p$ subgroup of $G(k)$.
Define $H = \mathcal{N}_{G(k)}(U)$.
Denote by $\mathbf{c}$ be the unique alcove fixed by $U$ given by Theorem \ref{thm:description:maximal:pro-p}.
By uniqueness of $\mathbf{c}$, the subgroup $H$ stabilises $\mathbf{c}$.
By Proposition \ref{prop:kernel:maximal:pro-p:simply:connected:case} (and conjugation), $U$ is a normal subgroup of $\mathrm{Stab}_{G(k)}(\mathbf{c})$.
Hence $H = \mathcal{N}_{G(k)}(U) = \mathrm{Stab}_{G(k)}(\mathbf{c})$ is an Iwahori subgroup of $G(k)$.\qed
\end{proof}

\begin{Cor}
Iwahori subgroups of $G(k)$ are $G(k)$-conjugate.
\end{Cor}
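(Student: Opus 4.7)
My plan is to combine the two results immediately preceding the corollary: Proposition \ref{prop:normalizer:maximal:pro-p}, which identifies Iwahori subgroups of $G(k)$ with the normalizers of maximal pro-$p$ subgroups, and Theorem \ref{thm:conjugaison:maximal:pro-p}, which asserts that maximal pro-$p$ subgroups of $G(k)$ are pairwise conjugate. Together, these reduce the conjugacy of Iwahori subgroups to a formal check about conjugation and normalizers.

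In more detail, I would start from two Iwahori subgroups $I_1, I_2 \subset G(k)$. By Proposition \ref{prop:normalizer:maximal:pro-p}, there exist maximal pro-$p$ subgroups $U_1, U_2$ of $G(k)$ such that $I_j = \mathcal{N}_{G(k)}(U_j)$ for $j = 1,2$. Then Theorem \ref{thm:conjugaison:maximal:pro-p} furnishes an element $g \in G(k)$ with $g U_1 g^{-1} = U_2$. The general identity $\mathcal{N}_{G(k)}(g U_1 g^{-1}) = g \, \mathcal{N}_{G(k)}(U_1) \, g^{-1}$, which holds for any subgroup and any conjugating element, then yields $g I_1 g^{-1} = I_2$, as required.

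There is essentially no obstacle: both inputs are already established earlier in the section, so the corollary is a one-line deduction. The only thing to be careful about is the use of the simple-connectedness hypothesis on $G$ that is in force throughout part \ref{subsection:description:building}; this hypothesis is needed precisely so that Proposition \ref{prop:normalizer:maximal:pro-p} applies (its proof invokes \cite[5.2.9]{BruhatTits2} to identify stabilizers of alcoves with their pointwise stabilizers, i.e.\ with Iwahori subgroups in the sense of the definition given just before the proposition). Since the simple-connectedness assumption is already in the ambient hypotheses of the section, no additional work is needed.
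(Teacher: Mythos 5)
Your proof is correct and is exactly the argument the paper gives: the corollary is deduced by combining Theorem \ref{thm:conjugaison:maximal:pro-p} with Proposition \ref{prop:normalizer:maximal:pro-p}, using that conjugation carries normalizers to normalizers. The paper merely states this as "immediate" (also citing \cite[3.7]{TitsCorvallis}), whereas you have spelled out the formal identity $\mathcal{N}_{G(k)}(g U_1 g^{-1}) = g\,\mathcal{N}_{G(k)}(U_1)\,g^{-1}$.
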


\begin{proof}
This is \cite[3.7]{TitsCorvallis}.
It is immediate by Theorem \ref{thm:conjugaison:maximal:pro-p} and Proposition \ref{prop:normalizer:maximal:pro-p}.\qed
\end{proof}

An interest of Proposition \ref{prop:normalizer:maximal:pro-p} is to have an \gts{intrinsic} definition (from the group theory point of view, in other words a description not using the action on the Bruhat-Tits building) of Iwahori subgroups in good cases (e.g. a simply connected group over a local field).
This provides a quick way to describe the affine Tits system in purely group-theoretic terms.






\end{document}